\def \cA{\mathcal{A}}
\def \cB{\mathcal{B}}
\def \cC{\mathcal{C}}
\def \cD{\mathcal{D}}
\def \cI{\mathcal{I}}
\def \cJ{\mathcal{J}}
\def \cK{\mathcal{K}}
\def \cL{\mathcal{L}}
\def \cO{\mathcal{O}}
\def \cT{\mathcal{T}}
\def \P{\mathsf P}
\def \E{\mathsf E}
\def \N{\mathbb{N}}
\def \R{\mathbb{R}}
\def \F{\mathbb F}
\def \ud{\mathrm{d}}
\newcommand{\eps}{\varepsilon}
\newtheorem{theorem}{Theorem}[section]
\newtheorem{lemma}[theorem]{Lemma}
\newtheorem{corollary}[theorem]{Corollary}
\newtheorem{proposition}[theorem]{Proposition}
\newtheorem{remark}[theorem]{Remark}
\newtheorem{assumption}[theorem]{Assumption}
\theoremstyle{definition}
\newtheorem{problem}{Problem}
\DeclareMathOperator*{\argmax}{arg\,max}
\title[Variational inequalities for controller vs.\ stopper games]{Variational inequalities on unbounded domains\\ for zero-sum singular-controller vs.\ stopper games}
\author[Bovo]{Andrea Bovo}
\author[De Angelis]{Tiziano De Angelis}
\author[Issoglio]{Elena Issoglio}
\subjclass[2020]{91A05, 91A15, 60G40, 93E20, 49J40, 35K58}
\keywords{zero-sum stochastic games, singular control, optimal stopping, controlled diffusions, variational inequalities, obstacle problems, gradient constraints, penalisation methods, unbounded domains.}
\address{A.\ Bovo: School of Management and Economics, Dept.\ ESOMAS, University of Torino, Corso Unione Sovietica, 218 Bis, 10134, Torino, Italy.}
\email{\href{mailto:andrea.bovo@unito.it}{andrea.bovo@unito.it}}
\address{T.\ De Angelis: School of Management and Economics, Dept.\ ESOMAS, University of Torino, Corso Unione Sovietica, 218 Bis, 10134, Torino, Italy; Collegio Carlo Alberto, Piazza Arbarello 8, 10122, Torino, Italy.}
\email{\href{mailto:tiziano.deangelis@unito.it}{tiziano.deangelis@unito.it}}
\address{E.\ Issoglio: Dept.\ of Mathematics ``G.\ Peano'', University of Torino, Via Carlo Alberto, 10, 10123, Torino, Italy.}
\email{\href{mailto:elena.issoglio@unito.it}{elena.issoglio@unito.it}}
\date{\today}
\numberwithin{equation}{section}
\begin{document}

\begin{abstract}
We study a class of zero-sum games between a singular-controller and a stopper over finite-time horizon. The underlying process is a multi-dimensional (locally non-degenerate) controlled stochastic differential equation (SDE) evolving in an unbounded domain. We prove that such games admit a value and provide an optimal strategy for the stopper. The value of the game is shown to be the maximal solution, in a suitable Sobolev class, of a variational inequality of `min-max' type with obstacle constraint and gradient constraint. Although the variational inequality and the game are solved on an unbounded domain we do not require boundedness of either the coefficients of the controlled SDE or of the cost functions in the game. 
\end{abstract}

\maketitle

\section{Introduction}
We consider a class of zero-sum games on a finite-time horizon $[0,T]$ between a controller and a stopper. The underlying stochastic dynamics $X^{[n,\nu]}$ is given by a multi-dimensional, singularly controlled, stochastic differential equation of the form
\begin{equation}\label{eq:dyn}
\ud X^{[n,\nu]}_t= b(X^{[n,\nu]}_t)\ud t+\sigma(X^{[n,\nu]}_t)\ud W_t+n_t\ud \nu_t,
\end{equation}
where $W$ is a multi-dimensional Brownian motion and the control pair $(n_t,\nu_t)_{t\in[0,T]}$ is given by a unitary vector $n_t(\omega)\in\R^d$ and a real-valued, right-continuous, increasing process $\nu_t(\omega)$. The stopper decides when the game ends and receives from the controller an amount depending on a running payoff $h(t,x)$, a terminal payoff $g(t,x)$ and a cost $f(t,x)$ per unit of control exerted, see Section \ref{sec:setting} for details. Using a blend of analytical and probabilistic techniques we prove that the game admits a value $v$ which is the maximal solution in a suitable Sobolev space of the variational inequalities:
\begin{align}\label{eq:varineq}
\begin{array}{l}
\min\big\{\max\big\{\partial_tu+\mathcal{L}u-ru+h, g-u\big\},f-|\nabla u|_d\big\}=0,\\[+5pt]
\max\big\{\min\big\{\partial_t u+\mathcal{L}u -ru+h, f-|\nabla u|_d\big\},g-u\big\}=0,
\end{array}
\end{align}
a.e.\ in $[0,T)\times\R^d$ with terminal condition $u(T,x)=g(T,x)$. Here, $\cL$ is the infinitesimal generator of the uncontrolled SDE, $r\ge 0$ is a constant discount rate for the game's payoff {and $|\cdot|_d$ is the Euclidean norm in $\R^d$}. We also provide an optimal stopping rule for the stopper, which provides a best response against {\em any} admissible control for the controller.

The two variational problems in \eqref{eq:varineq} have not received much attention in the literature and they pose a number of challenges. The first obvious one is that swapping the order of `min' and `max' is non-trivial and it relates in some sense to proving the equivalence between the so-called upper and lower value of the game. Secondly, a solution of the variational problem is subject to two hard constraints: an obstacle constraint $u\ge g$ and a gradient constraint $|\nabla u|_d\le f$. Thirdly, we solve the problem on an {\em unbounded domain} but without imposing boundedness of the coefficients of the SDE or of the payoff functions, and without requiring uniform ellipticity of the matrix $\sigma\sigma^\top$ in the whole space. These seem important technical improvements even when compared to variational inequalities on unbounded domains for singular control problems (e.g., Chow et al.\ \cite{chow1985additive}, Soner and Shreve \cite{soner1989regularity,soner1991free}, Menaldi and Taksar \cite{menaldi1989optimal} and Zhu \cite{zhu1992generalized}) or optimal stopping games (e.g., Bensoussan and Friedman \cite{bensoussan1977nonzero}, Friedman \cite{friedman1973stochastic} and Stettner \cite{stettner2011penalty}). 

Our method of proof builds upon penalisation techniques that address simultaneously the two hard constraints in \eqref{eq:varineq}. We find bounds on the Sobolev norm of the solution of the penalised (semilinear) PDE problem, uniformly with respect to the penalisation parameters, thanks to analytical techniques rooted in early work by Evans \cite{evans1979second} and new probabilistic tricks developed {\em ad-hoc} in our framework. Indeed, it turns out that the co-existence of two hard constraints in \eqref{eq:varineq}, the `min-max' structure of the problem, its parabolic nature and unboundedness of the domain make the use of purely analytical ideas as in Evans \cite{evans1979second} not sufficient to provide the necessary bounds (see also the references given in the previous paragraph and more recent work by Hynd \cite{hynd2013analysis} and Kelbert and Moreno-Franco \cite{kelbert2019hjb}, for comparison). In the process of obtaining our main result (Theorem \ref{thm:usolvar}) we also contribute a detailed proof of the existence and uniqueness of the solution for the penalised problem (Theorem \ref{thm:exisolpenprb} for bounded domain and Theorem \ref{thm:highreguued}+Proposition \ref{lem:prbraprRD} for unbounded domain), which hopefully will serve as a useful reference for future work in the field. Finally, the existence of an optimal stopping time $\tau_*$ is interesting in its own right as it may enable free-boundary techniques for the study of the optimal strategy of the controller. Thanks to the structure of $\tau_*$ the game can be turned into a singular control problem with absorption along the boundary of the (unknown) contact set $\{v=g\}$. Then it may be possible to use ideas from, e.g., De Angelis and Ekstr\"om \cite{de2017dividend} to determine the optimal control and a saddle point in the game.

The probabilistic study of so-called controller-stopper zero-sum games originates from work by Maitra and Sudderth \cite{maitra1996gambler}. A `gambler' selects the distribution of a discrete-time process $(X_n)_{n\in\N}$ and a stopper ends the game at a stopping time $\tau\in\N$ of her choosing and pays an amount $u(X_\tau)$ to the gambler, for some bounded function $u$. The problem was then cast in a continuous-time framework by Karatzas and Sudderth \cite{karatzas2001controller} who consider one-dimensional It\^o diffusions whose drift and diffusion coefficients are chosen by the controller. They obtain (almost explicit) optimal strategies for both players using methods based on the general theory of one-dimensional linear diffusions. Weerasinghe \cite{weerasinghe2006zsg} studied a similar problem, in which the underlying dynamics is a one-dimensional SDE whose diffusion coefficient is controlled, and finds that the game admits a value which is not continuously differentiable as function of the initial state of the process.

Prior to \cite{maitra1996gambler}, Bensoussan and Friedman \cite{bensoussan1974nonlinear} solved, via penalisation methods, nonlinear variational inequalities linked to zero-sum games in which each player uses both a control and a stopping time.

Following those early contributions, the literature on zero-sum (and to some extent also nonzero-sum) controller-stopper games has grown steadily. A wide variety of methods has been deployed spanning, for example, martingale theory (Karatzas and Zamfirescu \cite{karatzas2008martingale}), backward stochastic differential equations (e.g., Hamad\`ene \cite{hamadene2006stochastic}, Choukroun et al.\ \cite{choukroun2015bsde}) and solution of variational problems via viscosity theory (e.g., Bayraktar and Huang \cite{bayraktar2013controller} and Bayraktar and Young \cite{bayraktar2011regularity}). A common denominator of those papers is that the controller uses so-called `classical' controls, i.e., progressively measurable maps $(t,\omega)\mapsto \alpha_t(\omega)$ that enter the drift and diffusion coefficient, $b$ and $\sigma$ of the controlled SDE in the form 
\[
\ud X^\alpha_t= b(X^\alpha_t,\alpha_t)\ud t+\sigma(X^\alpha_t,\alpha_t)\ud W_t.
\] 
From an analytical point of view, those games are connected to Hamilton-Jacobi-Bellman (HJB) equations with obstacle constraint but without gradient constraint, hence different from \eqref{eq:varineq}.

Much less attention instead has been devoted to the study of games in which the controller can adopt singular controls as in \eqref{eq:dyn}. The resulting dynamics is `singular' in the sense that the mapping $t\mapsto \nu_t(\omega)$ need not be absolutely continuous with respect to the Lebesgue measure (nor even continuous at all). A notable contribution to this strand of the literature was given by Hernandez-Hernandez et al.\ \cite{hernandez2015zero} who consider a zero-sum game in which $X^{[n,\nu]}$ is real-valued. They provide a general verification theorem and explicit optimal strategies for both players in some examples. They also show that the value function of the game need not be smooth if the stopping payoff is not continuously differentiable. The methods in \cite{hernandez2015zero} rely crucially on the one-dimensional set-up that allows to solve the variational problem with ordinary differential equations and an educated guess on the structure of the optimal strategies. Here instead we develop a general theory for multi-dimensional state-dynamics that leads us to consider PDEs. The introduction of \cite{hernandez2015zero} also explains numerous important applications of zero-sum games like the ones we consider here. Such applications include models for a central bank controlling exchange rates up to the time of a possible political veto and models for the control of inflation. We refer the interested reader to the Introduction of \cite{hernandez2015zero} for more insight on applications.

Our paper is organised as follows. We set out the notation in Section \ref{sec:pre}. Then, in Section \ref{sec:setting} we cast our problem, state our main assumptions and the main result (Theorem \ref{thm:usolvar}). In Section \ref{sec:penalised-a} we introduce and solve a penalised problem on bounded domain. In Section \ref{sec:penalised-b} we let the domain become unbounded and solve the corresponding penalised problem. Finally, in Section \ref{sec:final} we prove our main result. A technical appendix completes the paper.

\section{Notation}\label{sec:pre}

Fix $d,d'\in\N$, $d\le d'$ and $T\in(0,\infty)$. Given $u\in\R^d$ we let $|u|_d$ be its Euclidean norm. For vectors $u,v\in\R^d$ their scalar product is denoted by $\langle u,v \rangle$. Given a matrix $M\in\R^{d\times d'}$, with entries $M_{ij}$, $i=1,\ldots d$, $j=1,\ldots d'$, we denote its norm by
\begin{align*}
|M|_{d\times d'}\coloneqq \Big(\sum_{i=1}^d\sum_{j=1}^{d'}M_{ij}^2\Big)^{1/2}
\end{align*}
and, if $d=d'$, we let $\mathrm{tr}(M)\coloneqq \sum_{i=1}^d M_{ii}$. 
The $d$-dimensional {\em open} ball centred in $0$ with radius $m$ is denoted by $B_m$ and 
the state space in our problem is going to be
\[
\R^{d+1}_{0,T}:=[0,T]\times\R^d.
\]
Finally, given a bounded set $A$ we denote by $\overline{A}$ its closure. 

For a smooth function $f:\R^{d+1}_{0,T}\to \R$ we denote its partial derivatives by 
$\partial_t f$, $\partial_{x_i}f$, $\partial_{tx_j}f$, $\partial_{x_ix_j}f$, for $i,j=1,\ldots d$. We will also use $f_t=\partial_t f$, $f_{x_i}=\partial_{x_i}f$, $f_{tx_i}=\partial_{tx_i}f$ and $f_{x_i x_j}=\partial_{x_i x_j}f$ to simplify long expressions. By $\nabla f$ we intend the spatial gradient, i.e., $\nabla f=(\partial_{x_1}f,\ldots \partial_{x_d} f)$, and by $D^2 f$ the spatial Hessian matrix with entries $\partial_{x_i x_j}f$ for $i,j=1,\ldots d$.
As usual $C^\infty_c(\R^{d+1}_{0,T})$ is the space of functions with compact support and infinitely many continuous derivatives. Continuous functions on a domain $D$ are denoted by $C(D)$. For an open bounded set $\cO\subset\R^{d+1}_{0,T}$ we let $C^0(\overline{\cO})$ be the space of continuous functions $f:\overline\cO\to\R$ equipped with the supremum norm
\begin{align}\label{eq:supremum}
\|f\|_{C^0(\overline\cO)}\coloneqq \sup_{(t,x)\in\overline \cO}|f(t,x)|.
\end{align}
Analogously, $C^0(\R^{d+1}_{0,T})$ is the space of bounded and continuous functions $f:\R^{d+1}_{0,T}\to\R$ equipped with the norm $\|f\|_\infty\coloneqq \|f\|_{C^0(\R^{d+1}_{0,T})}$ as in \eqref{eq:supremum} but with $\overline\cO$ replaced by $\R^{d+1}_{0,T}$.  

For {bounded $\cO\subset\R^{d+1}_{0,T}$}, we consider the following function spaces:  
\begin{itemize}
\item $C^{0,1}(\overline\cO)$ be the class of continuous functions with $\partial_{x_i}f\in C(\overline \cO)$ for $i=1,\ldots, d$; 
\item $C^{1,2}(\overline\cO)$ be the class of continuous functions with $\partial_t f,\partial_{x_i}f,\partial_{x_i x_j}f\in C(\overline \cO)$ for $i,j=1,\ldots, d$; 
\item $C^{1,3}(\overline\cO)$ be the class of continuous functions with 
\[
\partial_t f,\,\partial_{x_i}f,\,\partial_{x_i x_j}f,\,\partial_{x_i x_j x_k}f,\, \partial_{t x_i}f\in C(\overline \cO)
\] 
for $i,j,k=1,\ldots, d$ (notice the mixed derivatives $\partial_{tx_i}f$).
\end{itemize}
The above definitions extend obviously to continuously differentiable functions on $\R^{d+1}_{0,T}$.

Let $\ud(z_1,z_2)= (|t-s|+|x-y|^2_d)^{\frac{1}{2}}$ be the parabolic distance between points $z_1= (t,x)$ and $z_2= (s,y)$ in $\R^{d+1}_{0,T}$. For a fixed $\alpha\in(0,1)$ and a continuous function $f:\overline\cO\to\R$ we set \cite[p.\ 61]{friedman2008partial}
\begin{align*}
\|f\|_{C^{\alpha}(\overline\cO)}\coloneqq &\,\|f\|_{C^0(\overline\cO)}+\sup_{\substack{z_1,z_2\in \overline\cO \\ z_1\neq z_2}}\frac{|f(z_1)-f(z_2)|}{\ud^\alpha(z_1,z_2)}.
\end{align*}
{We say that $f\in C^\alpha(\overline\cO)$ if $f\in C^0(\overline\cO)$ and $\|f\|_{C^\alpha(\overline\cO)}<\infty$}. We work with the following norms, defined for sufficiently smooth functions $f$: 
\begin{align*}
&\|f\|_{C^{0,1,\alpha}(\overline\cO)}\coloneqq \|f\|_{C^{\alpha}(\overline\cO)}+\sum_{i=1}^d\|\partial_{x_i}f\|_{C^{\alpha}(\overline\cO)};\\
&\|f\|_{C^{1,2,\alpha}(\overline\cO)}\coloneqq \|f\|_{C^{0,1,\alpha}(\overline\cO)}+\|\partial_t f\|_{C^{\alpha}(\overline\cO)}+\sum_{i,j=1}^d\|\partial_{x_ix_j}f\|_{C^{\alpha}(\overline\cO)};\\
&\|f\|_{C^{1,3,\alpha}(\overline\cO)}\coloneqq \|f\|_{C^{1,2,\alpha}(\overline\cO)}+\sum_{i=1}^d\|\partial_{tx_i}f\|_{C^{\alpha}(\overline\cO)}+\sum_{i,j,k=1}^d\|\partial_{x_i x_j x_k}f\|_{C^{\alpha}(\overline\cO)}.
\end{align*}
For $(j,k)\in\{(0,0);(0,1);(1,2);(1,3)\}$ and bounded $\cO$ let us define
\begin{align}
C^{j,k,\alpha}(\overline\cO)&\coloneqq \big\{ f\in C^{j,k}(\overline\cO)\big|\,\|f\|_{C^{j,k,\alpha}(\overline\cO)}<\infty\big\},\\
C^{j,k,\alpha}_{\ell oc}(\R^{d+1}_{0,T})&\coloneqq\{f\in C^{j,k}(\R^{d+1}_{0,T})\big|\,f\in C^{j,k,\alpha}(\overline\cO)\ \text{for all bounded $\cO\subset\R^{d+1}_{0,T}$}\}.
\end{align}
For $B$ and $B'$ open balls in $\R^d$ and $S\in[0,T)$, let $\cO_B\coloneqq[0,T)\times B$ and $\cO_{S,B'}\coloneqq[0,S)\times B'$. We denote $C^{j,k,\alpha}_{Loc}(\cO_B)$ the class of functions $f\in C(\overline\cO_B)$ such that $f\in C^{j,k,\alpha}(\overline \cO_{S,B'})$ for all $S<T$ and $B'$ such that $\overline{B'}\subset B$. Finally, we let
\[
C^{j,k,\alpha}_{Loc}(\R^{d+1}_{0,T})\coloneqq \big\{ f\in C(\R^{d+1}_{0,T})\,\big|\, f\in C^{j,k,\alpha}_{Loc}(\cO_B)\ \text{for all open balls $B\subset \R^d$}\big\}.
\]
Notice that, the derivatives of functions in $C^{j,k,\alpha}_{\ell oc}(\R^{d+1}_{0,T})$ are H\"older continuous on $\overline{\cO}_B$ for any ball $B\subset\R^d$. Instead, the derivatives of functions in $C^{j,k,\alpha}_{Loc}(\cO_B)$ need not be continuous along the parabolic boundary of $\cO_B$ and derivatives of functions in $C^{j,k,\alpha}_{Loc}(\R^{d+1}_{0,T})$ may be discontinuous at $T$. To simplify long formulae, sometimes we use the notations:
\begin{align}\label{eq:ngrad}
\|\nabla f\|_{C^0(\overline\cO)}:=\Big(\sum_{i=1}^d\|\partial_{x_i}f\|^2_{C^0(\overline \cO)}\Big)^{\frac{1}{2}}\quad\text{and}\quad \|D^2 f\|_{C^0(\overline\cO)}:=\Big(\sum_{i,j=1}^d\|\partial_{x_i x_j}f\|^2_{C^0(\overline \cO)}\Big)^{\frac{1}{2}}.
\end{align}

For $p\in[1,\infty]$ we say that $f\in L^p_{\ell oc}(\R^{d+1}_{0,T})$ if $f\in L^p(\cO)$ for any bounded $\cO\subset \R^{d+1}_{0,T}$. Denote by $\cD^{1,2}_{\cO}$ the class of functions $f\in L^p_{\ell oc}(\R^{d+1}_{0,T})$ whose partial derivatives $\partial_t f$, $\partial_{x_i} f$, $\partial_{x_i x_j} f$ exist in the weak sense on $\cO$, for $i,j=1,\ldots d$, and let
\begin{align*}
\|f\|_{W^{1,2,p}(\cO)}\coloneqq \,\|f\|_{L^p(\cO)}+\|\partial_t f\|_{L^p(\cO)}+\sum_{i=1}^d\|\partial_{x_i}f\|_{L^p(\cO)} +\sum_{i,j=1}^d\|\partial_{x_ix_j}f\|_{L^p(\cO)}.
\end{align*}
Then we define $W^{1,2,p}(\cO)\coloneqq \{ f\in\cD^{1,2}_{\cO}\,|\,\|f\|_{W^{1,2,p}(\cO)}<\infty \}$
and 
\begin{align}\label{eq:W12p}
W^{1,2,p}_{\ell oc}(\R^{d+1}_{0,T})&\,\coloneqq \big\{ f\in L^p_{\ell oc}(\R^{d+1}_{0,T}) \big|\,f\in W^{1,2,p}(\cO), \,\forall \cO\subseteq \R^{d+1}_{0,T}, \cO\text{ bounded}\big\}.
\end{align}
For $\alpha=1-\frac{d+2}{p}$ and $p>d+2$, and for any bounded $\cO\subset\R^{d+1}_{0,T}$, we have 
\begin{align}\label{eq:inclW12C01}
C^{1,2}(\overline{\cO})\subset W^{1,2,p}(\cO)\hookrightarrow C^{0,1,\alpha}(\overline{\cO}),
\end{align}
where the first inclusion is obvious and the second one is a compact Sobolev embedding (\cite[eq. (E.9)]{fleming2012deterministic} or \cite[Exercise 10.1.14]{krylov2008lectures}). 


\section{Setting and Main Results}\label{sec:setting}
Let $(\Omega,\mathcal{F},\P)$ be a complete probability space, $\F=(\mathcal{F}_s)_{s\in[0,\infty)}$ a right-continuous filtration completed by the $\P$-null sets and $(W_s)_{s\in[0,\infty)}$ a $\F$-adapted, $d'$-dimensional Brownian motion. Fix $T\in(0,\infty)$.
For $t\in[0,T]$, we denote 
\[
\cT_t:=\left\{\tau\,|\,\text{$\tau$ is $\F$-stopping time with $\tau\in[0,T-t]$, $\P$-a.s.}\right\}
\]
and we let $\mathcal{A}_t$ be the class of processes
\begin{align*}
\mathcal{A}_t\coloneqq \left\{(n,\nu)\left|
\begin{array}{l}
\text{$(n_s)_{s\in[0,\infty)}$ is progressively measurable, $\R^d$-valued,}\\ [+3pt]
\text{with $|n_s|_d=1$, $\P$-a.s.\ for all $s\in[0,\infty)$;}\\ [+3pt]
\text{$(\nu_s)_{s\in[0,\infty)}$ is $\F$-adapted, real-valued, non-decreasing and}\\ [+3pt] 
\text{right-continuous with $\nu_{0-}=0$, $\P$-a.s.,\ and $\E[|\nu_{T-t}|^2]<\infty$}
\end{array}
\right. \right\}.
\end{align*}
The notation $\nu_{0-}=0$ accounts for a possible jump of $\nu$ at time zero.
For a given pair $(n,\nu)\in\cA_t$ we consider the following (controlled) stochastic differential equation:
\begin{align}\label{eq:prcXcntrll}
X_s^{[n,\nu]}=x+\int_0^s b(X_u^{[n,\nu]})\ud u + \int_0^s \sigma(X_u^{[n,\nu]})\ud W_u +\int_{[0,s]} n_u\ud \nu_u, \quad 0\leq s\leq T-t,
\end{align}
where $b:\R^d\to\R^d$ and $\sigma:\R^{d}\to \R^{d\times d'}$ are continuous functions. For $\P$-a.e.\ $\omega$, the map $s\mapsto n_s(\omega)$ is Borel-measurable on $[0,T]$ and $s\mapsto \nu_s(\omega)$ defines a measure on $[0,T]$; thus the Lebesgue-Stieltjes integral $\int_{[0,s]}n_u(\omega)\ud \nu_u(\omega)$ is well-defined for $\P$-a.e.\ $\omega$. Under our Assumption \ref{ass:gen1} on $(b,\sigma)$ there is a unique $\F$-adapted solution of \eqref{eq:prcXcntrll} by, e.g., \cite[Thm.\ 2.5.7]{krylov1980controlled}. We denote 
\[
\P_x\big(\,\cdot\,\big)=\P\big(\,\cdot\,\big|X^{[n,\nu]}_{0-}=x\big)\quad\text{and}\quad\E_x\big[\,\cdot\,\big]=\E\big[\,\cdot\,\big|X^{[n,\nu]}_{0-}=x\big].
\] 

We study a class of 2-player zero-sum games (ZSGs) between a (singular) controller and a stopper. The stopper picks a stopping time $\tau\in\cT_t$ and the controller chooses a pair $(n,\nu)\in\cA_t$. At time $\tau$ the game ends and the controller pays to the stopper a random payoff depending on $\tau$ and on the path of $X^{[n,\nu]}$ up to time $\tau$. 
Given continuous functions $f,g,h:\R^{d+1}_{0,T}\to [0,\infty)$, a fixed discount rate $r\ge 0$ and $(t,x)\in\R^{d+1}_{0,T}$, the game's {\em expected} payoff reads
\begin{align}\label{eq:payoff}
\cJ_{t,x}(n,\nu,\tau)= \E_{x}\bigg[e^{-r\tau}g(t\!+\!\tau,X_\tau^{[n,\nu]})\!+\!\int_0^{\tau}\!\! e^{-rs}h(t\!+\!s,X_s^{[n,\nu]})\,\ud s\!+\!\int_{[0,\tau]}\!\! e^{-rs}f(t\!+\!s,X_{s}^{[n,\nu]})\circ\!\ud \nu_s \bigg],
\end{align}
where
\begin{align}\label{eq:zoudefnint}
\int_{[0,\tau]}\! e^{-rs}f(t\!+\!s,X_s^{[n,\nu]})\circ\!\ud \nu_s\coloneqq &\int_0^{\tau}\!e^{-rs}f(t\!+\!s,X_s^{[n,\nu]})\,\ud \nu_s^c\\
&+\!\sum_{0\leq s\leq\tau}e^{-rs}\!\int_{0}^{\Delta\nu_{s}}f(t\!+\!s,X_{s-}^{[n,\nu]}+\lambda n_s)\,\ud \lambda.\notag
\end{align}
Here $\nu^c$ is the continuous part of the process $\nu$ in the decomposition $\nu_s=\nu^c_s+\sum_{u\le s}\Delta\nu_u$, with $\Delta\nu_u=\nu_u-\nu_{u-}$.
If $f(t,x)=f(t)$ the integral \eqref{eq:zoudefnint} reduces to the standard Lebesgue-Stieltjes integral $\int_{[0,\tau]}f(s)\ud \nu_s$. In general, \eqref{eq:zoudefnint} gives a cost of exerting control that is consistent with the gradient constraint appearing in Hamilton-Jacobi-Bellman (HJB) equations for singular stochastic control (see, e.g., \cite{zhu1992generalized}). It can be interpreted as the limit of standard Lebesgue-Stieltjes integrals if each jump of the control $\nu$ is approximated by infinitesimally small sequential jumps (\cite[Cor.\ 1]{alvarez2000singular}).

The game admits {\em lower} and {\em upper} value, defined respectively by 
\begin{align}\label{eq:lowuppvfnc}
\underline{v}(t,x)\coloneqq \sup_{\tau\in \mathcal{T}_t}\inf_{(n,\nu)\in \mathcal{A}_t} \cJ_{t,x}(n,\nu,\tau)\quad\text{and}\quad
\overline{v}(t,x)\coloneqq \inf_{(n,\nu)\in \mathcal{A}_t}\sup_{\tau\in \mathcal{T}_t} \cJ_{t,x}(n,\nu,\tau),
\end{align}
so that $\underline{v}(t,x) \leq \overline{v}(t,x)$. If equality holds then we say that our game admits a value 
\begin{align}\label{eq:valfunc}
v(t,x)\coloneqq \underline{v}(t,x) = \overline{v}(t,x).
\end{align}

For $a(x)\coloneqq (\sigma\sigma^\top)(x)\in\R^{d\times d}$, the infinitesimal generator of $X^{[e_1,0]}$ (with $e_1$ the unit vector with 1 in the first entry) reads
\begin{align*}
(\mathcal{L}\varphi)(x)=\tfrac{1}{2}\mathrm{tr}\left(a(x)D^2\varphi(x)\right)+\langle b(x),\nabla \varphi(x)\rangle, \ \text{for any $\varphi\in C^{2}(\R^d)$. }
\end{align*}
By density arguments the linear operator $\cL$ admits a unique extension $\bar \cL$ to $W^{2,p}_{\ell oc}(\R^d)$ and, with a slight abuse of notation, we set $\bar\cL=\cL$.

A heuristic use of the dynamic programming principle, suggests that the value of the game $v$ should be solution of a free boundary problem of the following form:
\begin{problem}\label{prb:varineq}
Fix $p>d+2$. Find a function $u\in W^{1,2,p}_{\ell oc}(\R^{d+1}_{0,T})$ such that, letting
\begin{align*}
\mathcal{I}\coloneqq \big\{(t,x)\in\R^{d+1}_{0,T}\big|\:|\nabla u(t,x)|_d<f(t,x)\big\}\quad\text{and}\quad\mathcal{C}\coloneqq \big\{(t,x)\in\R^{d+1}_{0,T}\big|\:u(t,x)>g(t,x)\big\},
\end{align*}
$u$ satisfies:  
\begin{align}\label{eq:inipde}
\begin{cases}
(\partial_tu +\mathcal{L}u-ru)(t,x)=-h(t,x), &\qquad \text{{for all}}\ (t,x)\in\mathcal{C}\cap\mathcal{I}; \\
(\partial_tu +\mathcal{L}u-ru)(t,x)\geq -h(t,x), &\qquad \text{for a.e.}\ (t,x)\in\mathcal{C}; \\
(\partial_tu +\mathcal{L}u-ru)(t,x)\leq -h(t,x), &\qquad \text{for a.e.}\ (t,x)\in \mathcal{I}; \\
u(t,x)\geq g(t,x),&\qquad \text{for all}\ (t,x)\in \R^{d+1}_{0,T};\\
|\nabla u(t,x)|_d\leq f(t,x), &\qquad \text{for all}\ (t,x)\in\R^{d+1}_{0,T}; \\
u(T,x)=g(T,x), &\qquad \text{for all}\ x\in\R^d,
\end{cases}
\end{align}
with $|u(t,x)|\leq c(1+|x|_d^2)$ for all $(t,x)\in\R^{d+1}_{0,T}$ and a suitable $c>0$.\hfill$\blacksquare$
\end{problem}
Notice that the conditions $u\geq g$ and $|\nabla u|_d\leq f$ hold for all $(t,x)$ because of the embedding \eqref{eq:inclW12C01}. Thus, the two sets $\cI$ and $\cC$ are open in $[0,T)\times\R^d$.
It is not hard to check that a function $u\in W^{1,2,p}_{\ell oc}(\R^{d+1}_{0,T})$ solves Problem \ref{prb:varineq} if and only if it solves the variational inequalities in \eqref{eq:varineq} a.e.\ on $\R^{d+1}_{0,T}$ with quadratic growth. Next we give assumptions under which we obtain our main result (Theorem \ref{thm:usolvar}). 
\begin{assumption}[Controlled SDE]\label{ass:gen1}
The functions $b$ and $\sigma$ are continuously differentiable and locally Lipschitz on $\R^d$.
Moreover, there is $D_1>0$ such that
\begin{align}
|b(x)|_d+|\sigma(x)|_{d\times d'}\leq D_1(1+|x|_d), \text{ for all }x\in\R^d.
\end{align}
Recalling $a=\sigma\sigma^\top$, for any bounded set $B\subset\R^{d}$ there is $\theta_B>0$ such that $a(\cdot)$ is locally elliptic:
\begin{align}\label{eq:defnthetaEC}
\langle\zeta,a(x)\zeta\rangle\geq\theta_B |\zeta |_d^2\,\qquad\text{for any $\zeta\in\R^d$ and all $x\in \overline{B}$}.
\end{align}
\end{assumption}

\begin{assumption}[Functions $f$, $g$, $h$]\label{ass:gen2}
For the functions $f,g,h:\R^{d+1}_{0,T}\to[0,\infty)$ the following hold:
\begin{enumerate}
\item[  (i)] $f^2,g \in C^{1,2,\alpha}_{\ell oc}(\R^{d+1}_{0,T})$ and $h\in C^{0,1,\alpha}_{\ell oc}(\R^{d+1}_{0,T})$ for some $\alpha\in(0,1)$; 
\item[ (ii)] $t\mapsto f(t,x)$ is non-increasing for each $x\in\R^d$ and $0\le f(t,x)\le c(1+|x|_d^p)$ for some $c,p>0$;
\item[(iii)] there is $K_0\in(0,\infty)$ such that for all $0\le s<t\le T$ and all $x\in\R^{d+1}_{0,T}$
\begin{align}\label{eq:lipstimehg}
h(t,x)-h(s,x)\leq K_0(t-s)\quad\text{and}\quad g(t,x)-g(s,x)\leq K_0(t-s);
\end{align}
\item[(iv)] there is $K_1\in(0,\infty)$ such that
\begin{align}\label{eq:fghpolgrow}
0\le g(t,x)+h(t,x) \leq K_1(1+|x|_d^2), \quad \text{for $(t,x)\in\R^{d+1}_{0,T}$};
\end{align}
\item[(v)] $f$ and $g$ are such that
\begin{align}\label{eq:grdg<f}
|\nabla g(t,x)|_d\leq f(t,x),\quad \text{for all $(t,x)\in\R^{d+1}_{0,T}$};
\end{align}
\item[(vi)] there is $K_2\in(0,\infty)$ such that
\begin{align}\label{defn:infTheta}
\Theta(t,x)\coloneqq \big(h+\partial_tg+\mathcal{L}g-rg\big)(t,x)\ge -K_2,\quad\text{for all $(t,x)\in\R^{d+1}_{0,T}$}.
\end{align}
\end{enumerate}
\end{assumption}

Condition \eqref{eq:lipstimehg} is immediately satisfied if $h$ and $g$ are time-homogeneous (as it is often the case in investment problems, see, e.g., \cite{chiarolla1998optimal}, \cite{chiarolla2009irreversible}, \cite{deangelis2014investment})  or if the maps $t\mapsto \big(h(t,x),g(t,x)\big)$ are non-increasing for all $x\in\R^d$. Otherwise, that condition amounts to setting a maximum growth rate on $t\mapsto\big( h(t,x),g(t,x)\big)$ as time increases. 
Condition \eqref{defn:infTheta} guarantees that there is no region in the state space where the controller (minimiser) can push the process and obtain arbitrarily large (negative) running gains.
Condition \eqref{eq:grdg<f} is sufficient to guarantee that the stopping payoff satisfies the gradient constraint in \eqref{eq:inipde} and therefore, from a probabilistic point of view, the stopper can stop at any point in the state-space. On a more technical level, we use \eqref{eq:grdg<f} to obtain crucial bounds on the spatial gradient of the solution of a penalised problem introduced below (cf.\ proof of Lemma \ref{thm:gradpenbnd}) and on its time-derivative (cf.\ proof of Lemma \ref{lem:bndtimder}, where we use the lower bound on the Hamiltonian \eqref{rem:trickhamg} implied by \eqref{eq:grdg<f}). Finally, it will be shown in Lemma \ref{lem:jumptau} that \eqref{eq:grdg<f} implies that the controller should never exert a jump at the time the stopper ends the game. Here we are not concerned with the construction of equilibria and Lemma \ref{lem:jumptau} is not needed for the rest of our analysis. However, we notice that the lemma points to potential difficulties in the construction of equilibria without condition \eqref{eq:grdg<f}. Intuitively, when \eqref{eq:grdg<f} is violated, the controller could make arbitrarily large (negative) gains by performing a large jump at the time the stopper decides to end the game. This can be seen by choosing $n_\tau=-\nabla g/|\nabla g|_d$ in the third line of \eqref{eq:lemjump}, so that the controller gains $|\nabla g|_d-f>0$ per unit of control exerted. 

The next theorem is the main result of the paper and its proof is distilled in the following sections through a number of technical results and estimates.
\begin{theorem}\label{thm:usolvar}
The game described above admits a value (i.e., \eqref{eq:valfunc} holds) and the value function $v$ of the game is the maximal solution to Problem \ref{prb:varineq}. Moreover, for any given $(t,x)\in\R^{d+1}_{0,T}$ and any admissible control $(n,\nu)\in\cA_t$, the stopping time $\tau_*=\tau_*(t,x;n,\nu)\in\cT_t$ defined under $\P_x$ as
\begin{align}\label{eq:taustar}
\tau_*\coloneqq \inf\big\{s\geq 0\,\big|\, v(t+s,X_s^{[n,\nu]})=g(t+s,X_s^{[n,\nu]})\big\}\wedge(T-t),
\end{align}
is optimal for the stopper. That is
\[
v(t,x)=\inf_{(n,\nu)\in\cA_t}\cJ_{t,x}(n,\nu,\tau_*(n,\nu)).
\]
\end{theorem}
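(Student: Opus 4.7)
The plan is to combine the penalisation results from Sections \ref{sec:penalised-a}--\ref{sec:penalised-b} with a probabilistic verification based on It\^o's formula. Theorem \ref{thm:highreguued} together with Proposition \ref{lem:prbraprRD} yields a sequence of penalised solutions $(u^\varepsilon)$ which is uniformly bounded in $W^{1,2,p}_{\ell oc}(\R^{d+1}_{0,T})$ and satisfies a quadratic growth estimate of the form $|u^\varepsilon(t,x)|\le c(1+|x|_d^2)$. Extracting a weak limit in $W^{1,2,p}_{\ell oc}$ (and a strong local limit in $C^{0,1,\alpha}$ via the compact embedding \eqref{eq:inclW12C01}) produces a function $u$ satisfying all pointwise and almost-everywhere conditions of Problem \ref{prb:varineq}; in particular both constraints $u\ge g$ and $|\nabla u|_d\le f$ hold everywhere by continuity.

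The next step is to identify $u$ with the value of the game. Fix $(t,x)\in\R^{d+1}_{0,T}$ and $(n,\nu)\in\cA_t$. I would apply the generalised It\^o formula for $W^{1,2,p}_{\ell oc}$ functions (in Krylov's form) to $s\mapsto \e^{-rs}u(t+s,X^{[n,\nu]}_s)$ on $[0,\tau_*]$. Before $\tau_*$ the process $X^{[n,\nu]}$ stays in the continuation region $\cC=\{u>g\}$, on which the variational inequality gives $\partial_t u+\cL u-ru+h\ge 0$ a.e. Since $|n_s|_d=1$ and $|\nabla u|_d\le f$, the Cauchy--Schwarz inequality bounds the continuous and jump contributions of $\nu$ by
\[
\int_0^{\tau_*}\!\e^{-rs}\langle\nabla u,n_s\rangle\ud\nu^c_s+\!\!\!\sum_{0\le s\le\tau_*}\!\!\!\e^{-rs}\!\int_0^{\Delta\nu_s}\!\!\langle\nabla u(t\!+\!s,X^{[n,\nu]}_{s-}\!+\!\lambda n_s),n_s\rangle\ud\lambda\ge -\!\!\int_{[0,\tau_*]}\!\e^{-rs} f\circ\ud\nu_s.
\]
Localising via $\sigma_R=\inf\{s\ge 0:|X^{[n,\nu]}_s|_d\ge R\}$ and passing to the limit $R\to\infty$ using the linear growth of $b,\sigma$, the quadratic growth of $u$, and $\E[|\nu_{T-t}|^2]<\infty$, one obtains
\[
u(t,x)\le \cJ_{t,x}(n,\nu,\tau_*),
\]
where one exploits $u(t+\tau_*,X_{\tau_*})=g(t+\tau_*,X_{\tau_*})$ on $\{\tau_*<T-t\}$ and the terminal condition on $\{\tau_*=T-t\}$. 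Lemma \ref{lem:jumptau} is used here to exclude a jump of $\nu$ at $\tau_*$ that would spoil this identification. Taking the infimum over $(n,\nu)$ gives $u(t,x)\le\underline v(t,x)$ and simultaneously shows that $\tau_*$ is optimal for the stopper against every admissible control.

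For the reverse bound I would use the second formulation of \eqref{eq:varineq}: on the open set $\cI=\{|\nabla u|_d<f\}$ one has $\partial_t u+\cL u-ru+h\le 0$ a.e. This suggests building an $\varepsilon$-optimal singular control that keeps $X$ inside $\cI$ via a Skorokhod-type reflection along the direction $n_s\propto-\nabla u(t+s,X_s)$ whenever the gradient constraint saturates. Because $\nabla u$ is only locally H\"older, a direct reflection is delicate; a safer route is to use the controls naturally associated with the penalised solutions $u^\varepsilon$ (for which the reflection field is smooth), compute the corresponding It\^o identity as an equality up to a penalisation error, and pass to the joint limit. This yields $u(t,x)\ge\sup_\tau\cJ_{t,x}(n^\varepsilon,\nu^\varepsilon,\tau)-\varepsilon\ge\overline v(t,x)-\varepsilon$, whence $u=\underline v=\overline v=:v$.

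Maximality follows by repeating the lower-bound verification with any other $W^{1,2,p}_{\ell oc}$ solution $w$ of Problem \ref{prb:varineq} and its associated first-hitting time $\tau^w_*$: the argument uses only the pointwise constraints and the a.e.\ inequality $\partial_t w+\cL w-rw+h\ge 0$ on $\{w>g\}$, all of which hold for $w$, so $w(t,x)\le\cJ_{t,x}(n,\nu,\tau^w_*)\le\sup_\tau\cJ_{t,x}(n,\nu,\tau)$ for every $(n,\nu)\in\cA_t$, and hence $w\le\overline v=v$. The main obstacle I foresee is the construction of the $\varepsilon$-optimal control in the non-uniformly elliptic, unbounded setting where $\nabla u$ is only H\"older continuous: controlling the generalised Stieltjes integral \eqref{eq:zoudefnint} uniformly under the localisation, and justifying the interchange between the limit $\varepsilon\to 0$ and the Skorokhod reflection, is where most of the technical effort will concentrate.
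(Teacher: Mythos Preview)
Your proposal is essentially correct and follows the same overall architecture as the paper: existence of a limit $u$ from the penalised problems, a verification lower bound $u\le\underline v$ via It\^o and the stopping time $\tau_*$, a verification upper bound $u\ge\overline v$ via the absolutely continuous controls $(n^\varepsilon,\nu^\varepsilon)$ inherited from the penalisation, and maximality by repeating the lower-bound argument for an arbitrary solution $w$.

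The one noteworthy difference is how the lower bound is executed. You propose to apply Krylov's It\^o formula directly to $u\in W^{1,2,p}_{\ell oc}$ on $[0,\tau_*]$ and use the a.e.\ inequality $\partial_t u+\cL u-ru+h\ge 0$ on $\cC$. The paper instead keeps the penalisation alive: it applies It\^o to the classical solution $u^{\varepsilon,\delta}$ (for which a genuine PDE holds on all of $\R^{d+1}_{0,T}$, not only on $\cC$), chooses the approximate stopping time $\tau_\eta=\inf\{s:u\le g+\eta\}$ so that the obstacle penalty $(g-u^{\varepsilon,\delta})^+$ vanishes along the path, passes to the limit in $(\varepsilon,\delta)$ by dominated convergence, then lets $m\to\infty$ and finally $\eta\downarrow 0$. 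This route sidesteps two technicalities that your direct approach must confront: (i) justifying It\^o--Krylov for a $W^{1,2,p}_{\ell oc}$ function when $X^{[n,\nu]}$ has jumps coming from the singular control (the paper only ever needs the change-of-variable formula for $C^{1,2}$ functions, where the jump contribution is classical); and (ii) the identification $u(t+\tau_*,X_{\tau_*})=g(t+\tau_*,X_{\tau_*})$ when the c\`adl\`ag process may jump across $\partial\cC$ at $\tau_*$. The paper handles (ii) by first obtaining $u(t,x)\le\eta+\cJ_{t,x}(n,\nu,\tau_\eta)$, then letting $\eta\downarrow 0$ so that $\tau_\eta\uparrow\tau_0$ defined via $X_{s-}$, and only then invoking Lemma~\ref{lem:jumptau} to replace $\tau_0$ by $\tau_*$. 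Your sketch uses Lemma~\ref{lem:jumptau} at the right place but skips the $\tau_\eta$ layer, which is where most of the care is needed.

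For the upper bound, your ``safer route'' is exactly what the paper does: it works with $u^\varepsilon$ (the $\delta\downarrow 0$ limit), defines $(n^\varepsilon,\nu^\varepsilon)$ from $\nabla u^\varepsilon$, shows $u^\varepsilon(t,x)\ge\cJ_{t,x}(n^\varepsilon,\nu^\varepsilon,\tau)$ for every $\tau$ via the Hamiltonian identity, hence $u^\varepsilon\ge\overline v$, and then lets $\varepsilon\to 0$. There is no ``$-\varepsilon$'' error term; the inequality $u^\varepsilon\ge\overline v$ is exact before taking the limit.
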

\begin{remark}
The optimal stopping rule $\tau_*$ is in so-called {\em feedback form}, because it only depends on the sample paths of the controlled dynamics $X^{[n,\nu]}$. This means that it characterises a best response against {\em any} choice of the control $(n,\nu)$ available to the controller. 
\end{remark}

\begin{remark}
Uniqueness of the solution to Problem \ref{prb:varineq} remains an open question. Methods used in, e.g., \cite{soner1991free}, do not apply due to the presence of obstacle {\em and} gradient constraints. Existence of an optimal control pair $(n^*,\nu^*)\in\cA_t$ is also subtle and cannot be addressed in the generality of our setting. Even in standard singular control problems (not games) abstract existence results rely on compactness arguments in the space of increasing processes under more stringent assumptions (e.g., convexity/concavity or independence from the state dynamics) on the functions $f$, $g$, $h$, $b$ and $\sigma$ (see, e.g., \cite{budhiraja2006optimal}, \cite{haussmann1995singular}, \cite{li2017existence}, \cite{taksar1992skorohod}). 
\end{remark}


\section{Penalised Problem and A Priori Estimates}\label{sec:penalised-a}

In this section we first introduce a class of penalised problems and illustrate their connection with a class of ZSGs of control (Section \ref{sec:pen-p}). Then we provide important {\em a priori} estimates on the growth and gradient of the solution of such penalised problems (Sections \ref{sec:growth} and \ref{sec:grd}) and, finally, we prove existence and uniqueness of the solution (Section \ref{sec:sol-pen}). 

\subsection{A penalised problem}\label{sec:pen-p}
For technical reasons related to solvability of the penalised problem and the probabilistic representation of its solution we choose to work on a sequence of bounded domains $(\cO_m)_{m\in\N}\subset \R^{d+1}_{0,T}$. Recall that $B_m\subset\R^d$ is the open ball of radius $m$ centred in the origin and set $\cO_m\coloneqq [0,T)\times B_m$ with parabolic boundary 
$\partial_P\cO_m= ([0,T)\times\partial B_m)\cup( \{T\}\times \overline{B}_m)$.

Let $(\xi_{m})_{m\in\N}\subset C^\infty_c(\R^d)$ be such that for each $m\in\N$ we have:
\begin{itemize}
\item[(i)] $0\leq \xi_{m}\leq 1$ on $\R^d$, with $\xi_m=1$ on $B_m$ and $\xi_m=0$ on $\R^{d}\setminus B_{m+1}$;
\item[(ii)] there is $C_0>0$ independent of $m\in\N$ such that
\begin{align}\label{rem:cutoffXI}
\text{ $|\nabla \xi_m|_d^2\leq C_0\xi_m$ on $\R^d$}.
\end{align}
\end{itemize}
An example of such functions is provided in Appendix \ref{app:xi} for completeness.
We define
\[
g_m(t,x)\coloneqq \xi_{m-1}(x)g(t,x)\quad\text{and}\quad h_m(t,x)\coloneqq \xi_{m-1}(x)h(t,x),\quad \text{for $(t,x)\in\R^{d+1}_{0,T}$}.
\] 

Clearly $g_m=h_m=0$ on $\R^{d+1}_{0,T}\setminus \cO_m$ while $g_m=g$ and $h_m=h$ on $\cO_{m-1}$. We also define a version $f_m$ of the function $f$ so that $f_m = f$ on $\cO_{m-1}$ and the condition 
\begin{align}\label{eq:grdgm<fm}
|\nabla g_m(t,x)|_d\leq f_m(t,x),\quad \text{for } (t,x)\in \overline{\cO}_m,
\end{align}
is preserved. For $(t,x)\in \R^{d+1}_{0,T}$ we let
\begin{align}\label{eq:def-fm}
f_m(t,x)\coloneqq&\, \Big(f^2(t,x)+\|g\|_{C^0(\overline \cO_m)}^2|\nabla \xi_{m-1}(x)|_d^2+2\big(g\xi_{m-1}\langle \nabla \xi_{m-1},\nabla g\rangle\big)(t,x)\Big)^{\frac{1}{2}}
\end{align}
and notice that on $ \overline{\cO}_{m}$
\begin{align*}
|\nabla g_m|^2_d=\sum_{i=1}^d\big(\xi_{m-1}\partial_{x_i}g+g\partial_{x_i}\xi_{m-1}\big)^2
=&\,\xi^2_{m-1}|\nabla g|^2_d\!+\!g^2|\nabla\xi_{m-1}|^2_d\!+\!2\xi_{m-1}g\langle\nabla \xi_{m-1},\nabla g\rangle\le f^2_m,
\end{align*}
where the inequality follows by the assumption $|\nabla g|_{d}\le f$ and $|\xi_m|\le 1$. 
Since $\nabla\xi_{m-1}=\bf 0$ on $B_{m-1}$, we have $f_m=f$ on $\cO_{m-1}$. Notice that $f^2_m\in C^{0,1,\alpha}(\overline{\cO}_m)$ by Assumption \ref{ass:gen2} but it does not vanish on the boundary of $\cO_m$. 
By construction, it is clear that 
\[
g_m\to g,\quad h_m\to h \quad\text{and}\quad f_m\to f,\quad \text{as $m\to\infty$},
\]
uniformly on any compact $\cK\subset\R^{d+1}_{0,T}$. 

Let us now state the penalised problem. Fix $(\eps,\delta)\in (0,1)^2$ and $m\in\N$. Let $\psi_\varepsilon\in C^2(\R)$ be a non-negative, convex function such that $\psi_\varepsilon(y)=0$ for $y\leq0$, $\psi_\varepsilon(y)>0$ for $y>0$, $\psi_\varepsilon'\geq0$ and $\psi_\varepsilon(y)=\frac{y-\varepsilon}{\varepsilon}$ for $y\geq 2\varepsilon$. We also denote $(y)^+\coloneqq \max\{0,y\}$ for $y\in\R$.
\begin{problem}\label{prb:penprob}
Find $u=u^{\eps,\delta}_m$ with $u\in C^{1,2,\alpha}(\overline{\cO}_m)$, for $\alpha\in(0,1)$ as in Assumption \ref{ass:gen2}, that solves:
\begin{align}\label{eq:penprob}
\begin{cases}\partial_tu+\mathcal{L}u-ru=-h_m-\frac{1}{\delta}\left( g_m-u\right)^++\psi_\varepsilon\left(|\nabla u|_d^2- f_m^2\right), &\text{on } \cO_m, \\
u(t,x)=g_m(t,x), & (t,x)\in\partial_P \cO_m.
\end{cases}
\end{align}
\hfill$\blacksquare$
\end{problem}
There are two useful probabilistic interpretations of a solution to Problem \ref{prb:penprob}, which we are going to illustrate next. Given $t\in[0,T]$, define the control classes 
\begin{align}\label{eq:cA-pen}
\cA^\circ_t
\coloneqq 
\left\{ 
(n,\nu)\left|
\begin{array}{l}
\text{$(n_s)_{s\in[0,\infty)}$ is progressively measurable, $\R^d$-valued},\\ [+3pt]
\text{with $|n_s|_d=1$, $\P$-a.s.\ for all $s\in[0,\infty)$}; \\ [+3pt]
\text{$(\nu_s)_{s\in[0,\infty)}$ is $\F$-adapted, real-valued, non-decreasing and}\\ [+3pt]
\text{absolutely continuous in time, $\P$-a.s., with $\E[|\nu_{T-t}|^2] <\infty$}
\end{array}
\right.
\right\},
\end{align}
and
\begin{align*}
\mathcal{T}^\delta_t\coloneqq 
\left\{
w\left|
\begin{array}{l}
\text{$(w_s)_{s\in[0,\infty)}$ is progressively measurable}, \\ [+3pt]
\text{with $0\leq w_s\leq\frac{1}{\delta}$, $\P$-a.s.\ for all $s\in[0,T-t]$}
\end{array}
\right.
\right\}.
\end{align*}
It is obvious that $\cA^\circ_t\subset \cA_t$. For $(t,x)\in\R^{d+1}_{0,T}$ and $y\in\R^d$ we define the Hamiltonian 
\begin{align}\label{eq:hmltn}
H^{\varepsilon}_m(t,x,y)\coloneqq \sup_{p\in\mathbb{R}^d}\left\{\langle y, p\rangle-\psi_\varepsilon\left(|p|^2_d- f_m^2(t,x)\right)\right\}.
\end{align}
The function $H^{\varepsilon}_m$ is non-negative (pick $p={\bf 0}$). Thanks to \eqref{eq:grdgm<fm}, choosing $p=-\nabla g_m(t,x)$ we have 
\begin{align}\label{rem:trickhamg}
H^{\varepsilon}_m(t,x,y)\geq -\langle y,\nabla g_m(t,x)\rangle,\quad\text{for all $(t,x,y)\in[0,T]\times\R^d\times\R^d$}.
\end{align}

For any admissible pair $(n,\nu)\in\cA^\circ_t$, we consider the controlled dynamics
\begin{align*}
 X_s^{[n,\nu]}=x+\int_0^s \big[b(X_u^{[n,\nu]})+ n_u\dot{\nu}_u\big]\ud u + \int_0^s \sigma(X_u^{[n,\nu]})\ud W_u, \qquad \text{for $0\leq s\leq T-t$},
\end{align*}
and define $\rho_{m} =\rho_{\cO_m} (t,x;n,\nu)$ as 
\begin{align}\label{eq:rom}
\rho_{m} = \inf\{s\geq0\,|\,X_{s}^{[n,\nu]}\notin B_m\}\wedge (T-t).
\end{align}
Instead, for $w\in \mathcal{T}^\delta_t$, we introduce a controlled discount factor $R^w_s\coloneqq \exp( -\int_0^s \left[r+w_\lambda\right] \ud\lambda)$.

For $(t,x)\in\overline\cO_m$ and a triple $[(n,\nu),w]\in\cA^\circ_t\times\cT^\delta_t$ let us consider an expected payoff:
\begin{align}\label{eq:Jpen}
&\cJ^{\varepsilon,\delta,m}_{t,x}(n,\nu,w)\\
&= \E_{x}\biggr[\int_0^{\rho_{m}}\!\!R^w_s\big[h_m(\cdot)\!+\!w_sg_m(\cdot)\!+\! H^{\varepsilon}_m(\cdot,n_s\dot{\nu}_s)\big](t\!+\!s,X_s^{[n,\nu]})\,\ud s\!+\!R^w_{\rho_{m}}g_m(t\!+\!\rho_{m} ,X_{\rho_{m}}^{[n,\nu]})\biggr].\notag
\end{align}
The associated upper and lower value read, respectively, 
\begin{align*}
\overline{v}^{\varepsilon,\delta}_m (t,x)= \inf_{(n,\nu)\in \cA^{\circ}_t}\sup_{w\in \mathcal{T}^\delta_t} \cJ^{\varepsilon,\delta,m}_{t,x}(n,\nu,w)\quad \text{and}\quad
\underline{v}^{\varepsilon,\delta}_m (t,x)= \sup_{w\in \mathcal{T}^\delta_t}\inf_{(n,\nu)\in \cA^{\circ}_t} \cJ^{\varepsilon,\delta,m}_{t,x}(n,\nu,w), 
\end{align*}
so that $\underline{v}^{\varepsilon,\delta}_m \leq \overline{v}^{\varepsilon,\delta}_m $. A solution of Problem \ref{prb:penprob} coincides with the value function of this ZSG.
\begin{proposition}\label{prp:probrap1}
Let $u^{\varepsilon,\delta}_m $ be a solution of Problem \ref{prb:penprob}. Then 
\begin{align}\label{eq:probrap}
u^{\varepsilon,\delta}_m (t,x)=\overline{v}^{\varepsilon,\delta}_m (t,x)=\underline{v}^{\varepsilon,\delta}_m (t,x),\quad\text{for all $(t,x)\in\overline{\cO}_m$}.
\end{align}
\end{proposition}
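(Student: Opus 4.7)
The proof is a verification argument built on It\^o's formula and the PDE \eqref{eq:penprob}. I would apply It\^o to the process $s \mapsto R^w_s u(t+s, X_s^{[n,\nu]})$ on the random interval $[0, \rho_m]$, for arbitrary $(n,\nu) \in \cA^\circ_t$ and $w \in \cT^\delta_t$. Regularity $u \in C^{1,2,\alpha}(\overline\cO_m)$ and confinement of $X^{[n,\nu]}_{\cdot \wedge \rho_m}$ to $\overline B_m$ make $\nabla u$ bounded along the trajectory, so the stochastic integral is a true martingale. Substituting the PDE for $\partial_t u + \cL u - ru$ and the boundary condition $u = g_m$ on $\partial_P \cO_m$ yields the key identity
\begin{align*}
u(t,x) - \cJ^{\varepsilon,\delta,m}_{t,x}(n,\nu,w) = \E_x \int_0^{\rho_m} R^w_s \big[I_1(s) + I_2(s)\big] \ud s,
\end{align*}
where, with all arguments evaluated at $(t+s, X_s^{[n,\nu]})$,
\begin{align*}
I_1(s) &= \tfrac{1}{\delta}(g_m - u)^+ - w_s(g_m - u),\\
I_2(s) &= -\psi_\varepsilon(|\nabla u|_d^2 - f_m^2) - \langle \nabla u, n_s \dot\nu_s\rangle - H_m^\varepsilon(\cdot, n_s\dot\nu_s).
\end{align*}
An elementary case-split on the sign of $g_m - u$ gives $I_1 \geq 0$ for every admissible $w$, while picking $p = -\nabla u$ in the supremum defining $H_m^\varepsilon$ (as in \eqref{rem:trickhamg} but with $\nabla u$ in place of $\nabla g_m$) gives $I_2 \leq 0$ for every admissible $(n,\nu)$.

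Next I would construct Markov-feedback strategies that annihilate $I_1$ and $I_2$ separately. For any $(n,\nu) \in \cA^\circ_t$, the stopper's choice
\[
w^*_s \coloneqq \tfrac{1}{\delta}\,\mathbf{1}_{\{g_m(t+s, X_s^{[n,\nu]}) > u(t+s, X_s^{[n,\nu]})\}} \in \cT^\delta_t
\]
forces $I_1 \equiv 0$, so $u(t,x) \leq \cJ^{\varepsilon,\delta,m}_{t,x}(n,\nu, w^*) \leq \sup_w \cJ^{\varepsilon,\delta,m}_{t,x}(n,\nu, w)$; taking the infimum over $(n,\nu)$ gives $u \leq \overline v^{\varepsilon,\delta}_m$. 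Symmetrically, the controller's Markov feedback
\[
n^*(t,x)\,\dot\nu^*(t,x) \coloneqq -2\,\psi_\varepsilon'\!\big(|\nabla u(t,x)|_d^2 - f_m^2(t,x)\big)\,\nabla u(t,x)
\]
is the first-order condition for the supremum in \eqref{eq:hmltn} at $p = -\nabla u$; it realises equality in the Legendre--Fenchel bound above and hence makes $I_2 \equiv 0$ along the induced trajectory. Admissibility $(n^*,\nu^*) \in \cA^\circ_t$ follows from H\"older regularity and boundedness of $\nabla u$ on the compact $\overline\cO_m$, boundedness of $\psi_\varepsilon'$, and standard strong solvability of the resulting SDE up to $\rho_m$ (yielding $\E|\nu^*_{T-t}|^2 < \infty$). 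The identity with $(n,\nu) = (n^*,\nu^*)$ then gives $u \geq \cJ^{\varepsilon,\delta,m}_{t,x}(n^*,\nu^*, w)$ for every $w$; hence $u \geq \sup_w \cJ^{\varepsilon,\delta,m}_{t,x}(n^*,\nu^*, w) \geq \overline v^{\varepsilon,\delta}_m$ and also $u \geq \underline v^{\varepsilon,\delta}_m$ (using $\inf_{(n,\nu)}\cJ(n,\nu,w) \leq \cJ(n^*,\nu^*,w)$).

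Combining these bounds yields $u = \overline v^{\varepsilon,\delta}_m$ and $\underline v^{\varepsilon,\delta}_m \leq u$. The remaining inequality $\underline v^{\varepsilon,\delta}_m \geq u$ I would obtain from the Isaacs-type separability of the pointwise Hamiltonian in $w$ and $(n,\nu)$, which makes $(n^*,\nu^*, w^*)$ a saddle of the Hamiltonian: along the common trajectory $X^{[n^*,\nu^*]}$ both $I_1$ and $I_2$ vanish simultaneously, giving $\cJ^{\varepsilon,\delta,m}_{t,x}(n^*,\nu^*, w^*) = u$, and a standard DPP argument for the inner control problem at fixed $w^*$ (whose HJB coincides with \eqref{eq:penprob} after substituting $w = w^*$) propagates this into $\inf_{(n,\nu)}\cJ^{\varepsilon,\delta,m}_{t,x}(n,\nu, w^*) \geq u$. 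I expect the main obstacle to be admissibility of the candidate controller feedback $(n^*,\nu^*)$ (which reduces to SDE theory with a merely H\"older state-dependent drift on $\overline\cO_m$ and to square integrability of $\nu^*$), closely followed by the reconciliation of the Markov-feedback saddle with the open-loop sup-inf appearing in $\underline v^{\varepsilon,\delta}_m$.
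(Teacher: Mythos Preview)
Your argument mirrors the paper's almost exactly: Dynkin's formula for $R^w_s u(t{+}s,X^{[n,\nu]}_s)$, substitution of the PDE, the bang--bang feedback $w^*_s=\tfrac1\delta\mathbf 1_{\{u\le g_m\}}$ for the maximiser, and the first-order-condition feedback $n^*\dot\nu^*=-2\psi'_\varepsilon(|\nabla u|^2_d-f_m^2)\nabla u$ for the minimiser---these are precisely the paper's $w^*$ in \eqref{eq:dfnoptw*} and $(n^*,\nu^*)$ in \eqref{eq:optcntr}. Admissibility of $(n^*,\nu^*)$ is handled just as you suggest: $u\in C^{1,2,\alpha}(\overline\cO_m)$ makes the drift of $X^*$ Lipschitz, and $\psi'_\varepsilon\le 1/\varepsilon$ makes $\dot\nu^*$ bounded.

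The only divergence is in how the two inequalities are booked. The paper pairs them the other way: from $u\le \cJ(n,\nu,w^*)$ for every $(n,\nu)$ it writes $u\le\inf_{(n,\nu)}\cJ(n,\nu,w^*)\le\underline v^{\varepsilon,\delta}_m$, and from $u\ge \cJ(n^*,\nu^*,w)$ for every $w$ it gets $u\ge\overline v^{\varepsilon,\delta}_m$; together with $\underline v\le\overline v$ this closes everything in one stroke. You instead note---correctly---that $w^*$ depends on $(n,\nu)$ through $X^{[n,\nu]}$, so at face value the $w^*$-step only yields $u\le\overline v^{\varepsilon,\delta}_m$, leaving $u\le\underline v^{\varepsilon,\delta}_m$ to be argued separately. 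The paper is terse on this point; your scruple is legitimate.

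However, your proposed remedy does not work as written. Freezing $w^*$ along the particular trajectory $X^{[n^*,\nu^*]}$ produces a generic adapted process, not a Markov function of the running state $X^{[n,\nu]}$; the inner minimisation $\inf_{(n,\nu)}\cJ(n,\nu,w^*)$ is then non-Markovian and its HJB does not reduce to \eqref{eq:penprob}, so the ``standard DPP argument'' you invoke does not apply. In practice the identification $u=\overline v^{\varepsilon,\delta}_m$ already delivers everything Proposition~\ref{prp:probrap1} is used for downstream (uniqueness in Corollary~\ref{cor:unique-pen}, the bounds in Lemmas~\ref{lem:polygrow} and \ref{lem:fntbndr}, and the analogue Proposition~\ref{lem:prbraprRD}), so your final paragraph is attacking a harder problem than the paper actually needs.
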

\begin{proof}
For simplicity denote $u=u^{\eps,\delta}_m$. By definition $\overline{v}^{\eps,\delta}_m=\underline{v}^{\eps,\delta}_m=u=g_m$ on $\partial_P\cO_m$. Fix $(t,x)\in\cO_m$ and an arbitrary triple $[(n,\nu),w]\in\cA^\circ_t\times\cT^\delta_t$. Applying Dynkin's formula to 
$R^w_{\rho_m}u(t+\rho_{m},X^{[n,\nu]}_{\rho_{m}})$ and using that $u$ solves \eqref{eq:penprob} we have
\begin{align}\label{eq:repensol}
u(t,x)=\E_x\biggr[&R^w_{\rho_m}g_m(t+\rho_m ,X_{\rho_m}^{[n,\nu]})\!+\!\int_0^{\rho_m}\!R^w_sh_m(t\!+\!s,X_s^{[n,\nu]})\,\ud s\\
&+\int_0^{\rho_{m}}\!R^w_s\big[\tfrac{1}{\delta}\left(g_m-u\right)^+\!-\!\psi_{\varepsilon}\left(|\nabla u|_d^2- f_m^2\right)\big](t+s,X_s^{[n,\nu]})\,\ud s\notag \\
&+\int_0^{\rho_{m}}\!R^w_s\big[w_s u(t+s,X_s^{[n,\nu]})\!-\!\big\langle n_s\dot{\nu}_s,\nabla u(t+s,X_s^{[n,\nu]})\big\rangle \big]\,\ud s\biggl]. \notag
\end{align}
By definition of the Hamiltonian we have (choosing $p=-\nabla u(t+s,X_s^{[n,\nu]})$ in \eqref{eq:hmltn})
\begin{align}\label{eq:Ham}
-\big\langle n_s\dot{\nu}_s,\nabla u(t+s,X_s^{[n,\nu]})\big\rangle-\psi_{\varepsilon}\left(|\nabla u|_d^2- f_m^2\right)(t+s,X_s^{[n,\nu]})\le H^{\eps}_m(t+s,X_s^{[n,\nu]},n_s\dot{\nu}_s).
\end{align}
Moreover, choosing $w=w^*\in \cT^{\delta}_t$ defined as
\begin{align}\label{eq:dfnoptw*}
w^*_s\coloneqq \begin{cases} 0 &\text{if }u(t+s,X_s^{[n,\nu]})>g_m(t+s,X_s^{[n,\nu]}),\\
\frac{1}{\delta}&\text{if }u(t+s,X_s^{[n,\nu]})\leq g_m(t+s,X_s^{[n,\nu]}),
\end{cases}
\end{align}
we also have
\begin{align}\label{eq:w*}
\tfrac{1}{\delta}\big(g_m-u\big)^+(t+s,X_s^{[n,\nu]})+w_s^*u(t+s,X_s^{[n,\nu]}) = w^*_s g_m(t+s,X_s^{[n,\nu]}).
\end{align}
Then, plugging \eqref{eq:Ham} and \eqref{eq:w*} into \eqref{eq:repensol} we arrive at $u(t,x)\le \cJ_{t,x}^{\varepsilon,\delta,m}(n,\nu,w^*)$. Since the pair $(n,\nu)$ was arbitrary, then we have 
\begin{align*}
u(t,x)\leq \inf_{(n,\nu)\in\cA^\circ_t}\cJ_{t,x}^{\varepsilon,\delta,m}(n,\nu,w^*),
\end{align*}
and therefore $u(t,x)\le \underline{v}^{\varepsilon,\delta}_m (t,x)$.
Next we are going to prove that $u\geq \overline{v}^{\varepsilon,\delta}_m$.

For any $w\in\cT^\delta_t$ it is not hard to see that 
\begin{align}\label{eq:ridofpnlobst}
\tfrac{1}{\delta}\big(g_m-u\big)^+(t+s,X_s^{[n,\nu]})+w_s u(t+s,X_s^{[n,\nu]})\geq w_s g_m(t+s,X_s^{[n,\nu]}),
\end{align}
since $0\le w_s\leq\frac{1}{\delta}$ for all $s\in[0,T-t]$. Assume, it is possible to find a pair $(n^*,\nu^*)\in\cA^\circ_t$ such that, putting $X^*=X^{[n^*,\nu^*]}$ we obtain
\begin{align}\label{eq:Ham*}
-\big\langle n^*_s\dot{\nu}^*_s,\nabla u(t+s,X_s^{*})\big\rangle-\psi_{\varepsilon}\left(|\nabla u|_d^2- f_m^2\right)(t+s,X_s^{*})= H^{\eps}_m(t+s,X_s^{*},n^*_s\dot{\nu}^*_s).
\end{align}
Then, plugging \eqref{eq:ridofpnlobst} and \eqref{eq:Ham*} into \eqref{eq:repensol}, we have $u(t,x)\ge \cJ^{\eps,\delta,m}_{t,x}(n^*,\nu^*,w)$. Since $w\in\cT^\delta_t$ is arbitrary we have
\[
u(t,x)\ge \sup_{w\in\cT^\delta_t}\cJ^{\eps,\delta,m}_{t,x}(n^*,\nu^*,w)\ge \overline v^{\eps,\delta}_m(t,x).
\]

It remains to find the pair $(n^*,\nu^*)$.
By concavity, the supremum in $H^{\eps}_m(t,x,y)$ is uniquely attained at a point $p=p(t,x,y)\in\R^d$ identified by the first-order condition
$y=2\psi_{\varepsilon}'(|p|^2_d- f_m^2(t,x))p$.
Taking 
\begin{equation}\label{eq:optcntr}
\begin{split}
n_s^*\coloneqq &\!
\begin{cases} -\frac{\nabla u^{\eps,\delta}_m(t+s,X_s^*)}{|\nabla u^{\eps,\delta}_m(t+s,X_s^*)|_d}, &\quad\text{if } \nabla u^{\eps,\delta}_m(t+s,X_s^*)\neq \mathbf{0},\\[+5pt] 
{\text{any unit vector}}, &\quad\text{if }\nabla u^{\eps,\delta}_m(t+s,X_s^*)=\mathbf{0},
\end{cases}\\
\dot{\nu}_s^*\coloneqq &\, 2\psi_{\varepsilon}'\left(|\nabla u^{\eps,\delta}_m(t+s,X_s^*)|_d^2- f_m^2(t+s,X_s^*)\right)|\nabla u^{\eps,\delta}_m(t+s,X_s^*)|_d,
\end{split}
\end{equation}
with
\begin{align}\label{eq:X*pen}
X^*_{s\wedge\rho_{m}}\!=\!x\!+\!\int_0^{s\wedge\rho_{m}}\!\!\big[b(X_\lambda^*)\!-\! 2\big(\psi_{\varepsilon}'(|\nabla u^{\eps,\delta}_m|_d^2\!-\!f_m^2)\nabla u^{\eps,\delta}_m\big)(t\!+\!\lambda,X_\lambda^*)\big]\ud \lambda\!+\!\int_0^{s\wedge\rho_{m}}\!\!\!\sigma(X_\lambda^*)\ud W_\lambda, 
\end{align}
for $s\in[0,T-t]$, we have that \eqref{eq:Ham*} holds (we restored the notation $u^{\eps,\delta}_m$ for future reference). It remains to check that $(X^*_{s\wedge \rho_{m}})_{s\in[0,T]}$ is actually well-defined. 

Since $u^{\eps,\delta}_m\in C^{1,2,\alpha}(\overline{\cO}_m)$ and $\psi_\eps\in C^2(\R)$, then both the drift and diffusion coefficients of the controlled SDE are Lipschitz in space (recall Assumption \ref{ass:gen1}). Hence \eqref{eq:X*pen} admits a unique strong solution. Moreover, both $n^*$ and $\nu^*$ are progressively measurable and since $0\le \psi'_\eps\le 1/\eps$ then also
$0\le \dot{\nu}_s^*\leq2\eps^{-1} \|\nabla u^{\eps,\delta}_m\|_{C^0(\overline{\cO}_m)}<\infty$.
Therefore $(n^*,\nu^*)\in\cA^\circ_t$.
\end{proof}
\begin{remark}\label{rem:unique}
From the proof of Proposition \ref{prp:probrap1} we see that $w^*$ defined in \eqref{eq:dfnoptw*} is optimal for the maximiser and $(n^*,\nu^*)$ defined in \eqref{eq:optcntr} is optimal for the minimiser in the ZSG with payoff \eqref{eq:Jpen}. 
\end{remark}

We also show that $u^{\eps,\delta}_m$ is the value function of a control problem with a recursive structure.
\begin{proposition}\label{cor:prbrap2edm}
Let  $u^{\varepsilon,\delta}_m$ be a solution of Problem \ref{prb:penprob}. Then, for $(t,x)\in\overline{\cO}_m$, 
\begin{align}\label{eq:prbrap2d}
u^{\varepsilon,\delta}_m (t,x)=\inf_{(n,\nu)\in \cA^\circ_t}\E_{x}\biggr[&e^{-(r+\delta^{-1})\rho_{m}} g_m(t\!+\!\rho_{m} ,X_{\rho_{m}}^{[n,\nu]})\notag\\
&\!+\!\int_0^{\rho_{m}}\!\!e^{-(r+\delta^{-1})s}\big[ h_m\!+\!\tfrac{1}{\delta}(g_m\vee u^{\varepsilon,\delta}_{m})+H^{\varepsilon}_m(\cdot,n_s\dot{\nu}_s)\big](t\!+\!s,X_s^{[n,\nu]})\,\ud s\biggr],
\end{align}
and the pair $(n^*,\nu^*)$ from \eqref{eq:optcntr} is optimal.
\end{proposition}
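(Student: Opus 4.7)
The plan is to rewrite the penalised PDE so that the obstacle penalty gets absorbed into an enlarged discount factor, and then repeat the verification argument from Proposition \ref{prp:probrap1}. The starting observation is the pointwise identity
\begin{equation*}
\tfrac{1}{\delta}(g_m-u)^+ \;=\; \tfrac{1}{\delta}(g_m\vee u) - \tfrac{1}{\delta}u,
\end{equation*}
which one checks directly in the two cases $u\ge g_m$ and $u<g_m$. Substituting into \eqref{eq:penprob} and moving the linear term $-\tfrac{1}{\delta}u$ to the left-hand side, the function $u^{\eps,\delta}_m$ solves the equivalent Cauchy--Dirichlet problem
\begin{equation*}
\partial_t u^{\eps,\delta}_m+\cL u^{\eps,\delta}_m-(r+\delta^{-1})u^{\eps,\delta}_m = -h_m-\tfrac{1}{\delta}(g_m\vee u^{\eps,\delta}_m)+\psi_\eps\bigl(|\nabla u^{\eps,\delta}_m|_d^2-f_m^2\bigr)\quad\text{on }\cO_m,
\end{equation*}
with the same boundary condition $u^{\eps,\delta}_m=g_m$ on $\partial_P\cO_m$.

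Next, I would fix $(t,x)\in\overline\cO_m$ and an arbitrary $(n,\nu)\in\cA^\circ_t$ and apply Dynkin's formula to the process $s\mapsto e^{-(r+\delta^{-1})s}u^{\eps,\delta}_m(t+s,X^{[n,\nu]}_s)$ on $[0,\rho_m]$. Since $u^{\eps,\delta}_m\in C^{1,2,\alpha}(\overline\cO_m)$ the local martingale is a true martingale and, using the boundary condition on $\{X^{[n,\nu]}_{\rho_m}\in\partial B_m\}\cup\{\rho_m=T-t\}$, taking expectation yields
\begin{align*}
u^{\eps,\delta}_m(t,x)=\E_x\biggl[&e^{-(r+\delta^{-1})\rho_m}g_m(t+\rho_m,X^{[n,\nu]}_{\rho_m})\\
&+\int_0^{\rho_m}\!e^{-(r+\delta^{-1})s}\Bigl[h_m+\tfrac{1}{\delta}(g_m\vee u^{\eps,\delta}_m)+\Phi_s\Bigr](t+s,X^{[n,\nu]}_s)\,\ud s\biggr],
\end{align*}
where $\Phi_s:=-\langle n_s\dot\nu_s,\nabla u^{\eps,\delta}_m\rangle-\psi_\eps(|\nabla u^{\eps,\delta}_m|_d^2-f_m^2)$, evaluated at $(t+s,X^{[n,\nu]}_s)$.

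By the very definition \eqref{eq:hmltn} of $H^\eps_m$ (choosing $p=-\nabla u^{\eps,\delta}_m$) we have $\Phi_s\le H^\eps_m(t+s,X^{[n,\nu]}_s,n_s\dot\nu_s)$ for every $(n,\nu)\in\cA^\circ_t$, with equality attained along the feedback pair $(n^*,\nu^*)$ of \eqref{eq:optcntr} thanks to the first-order condition characterising the maximiser in \eqref{eq:hmltn}. Hence $u^{\eps,\delta}_m(t,x)\le \E_x[\,\cdots\,]$ for every admissible $(n,\nu)$ with equality for $(n^*,\nu^*)$, which gives \eqref{eq:prbrap2d} after taking the infimum and simultaneously shows optimality of $(n^*,\nu^*)$.

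There is no genuine obstacle here: the existence of a classical solution is granted by assumption, the admissibility and strong solvability of the SDE driven by $(n^*,\nu^*)$ has already been verified in the proof of Proposition \ref{prp:probrap1}, and the only new ingredient is the algebraic rearrangement that reveals the recursive structure. If anything, one must be mindful that the self-referential term $g_m\vee u^{\eps,\delta}_m$ on the right-hand side is not a fixed-point condition but simply the value of the already-known solution $u^{\eps,\delta}_m$, so no additional well-posedness argument is required.
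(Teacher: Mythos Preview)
Your proof is correct and follows essentially the same approach as the paper. The only cosmetic difference is that the paper recycles the Dynkin representation \eqref{eq:repensol} already derived in the proof of Proposition~\ref{prp:probrap1} and specialises it to $w\equiv\tfrac{1}{\delta}$ (so that $R^w_s=e^{-(r+\delta^{-1})s}$), whereas you rewrite the PDE with the enlarged discount rate and apply Dynkin's formula afresh; the algebraic identity $\tfrac{1}{\delta}(g_m-u)^++\tfrac{1}{\delta}u=\tfrac{1}{\delta}(g_m\vee u)$ and the Hamiltonian bound are used identically in both.
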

\begin{proof}
For simplicity denote $u=u^{\eps,\delta}_m$. Since $\frac{1}{\delta}(g_m-u)^++\frac{1}{\delta}u=\frac{1}{\delta} g_m\vee u$, then taking $w\equiv\frac{1}{\delta}$ in \eqref{eq:repensol} and using \eqref{eq:Ham} we get
\begin{align*}
u(t,x)\leq\inf_{(n,\nu)\in\cA^{\circ}_t}\E_x\biggr[&\,e^{-(r+\delta^{-1})\rho_{m}}g_m(t\!+\!\rho_{m} ,X_{\rho_{m}}^{[n,\nu]})\\
&+\!\int_0^{\rho_{m}}\!e^{-(r+\delta^{-1})s}\big[h_m+\tfrac{1}{\delta}g_m\vee u+H^{\varepsilon}_m(\cdot,n_s\dot{\nu}_s)\big](t\!+\!s,X_s^{[n,\nu]})\,\ud s\biggr].
\end{align*}
The equality is obtained by substituting in \eqref{eq:repensol} the controls $w\equiv\frac{1}{\delta}$ and $(n^*,\nu^*)$ defined in \eqref{eq:optcntr}. Recalling the notation $X^*=X^{[n^*,\nu^*]}$ and \eqref{eq:Ham*} we obtain \eqref{eq:prbrap2d} and optimality of $(n^*,\nu^*)$.
\end{proof}
From the probabilistic representation of $u^{\eps,\delta}_m$ in \eqref{eq:probrap} we establish uniqueness in Problem \ref{prb:penprob}.
\begin{corollary}\label{cor:unique-pen}
There is at most one solution to Problem \ref{prb:penprob}.
\end{corollary}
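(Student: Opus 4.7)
The plan is to read uniqueness off as an immediate consequence of the probabilistic representation established in Proposition \ref{prp:probrap1}. The crucial observation is that the right-hand side of \eqref{eq:probrap}, namely $\overline{v}^{\eps,\delta}_m(t,x)=\underline{v}^{\eps,\delta}_m(t,x)$, is defined exclusively in terms of the data of the penalised problem -- the functions $f_m, g_m, h_m$, the coefficients $b, \sigma$, the discount $r$, the parameters $\eps,\delta$, the Hamiltonian $H^\eps_m$, and the control classes $\cA^\circ_t$, $\cT^\delta_t$ -- none of which depend on the solution $u^{\eps,\delta}_m$ itself. In particular, the game value on the right-hand side of \eqref{eq:probrap} is the same regardless of which solution of Problem \ref{prb:penprob} one plugs into the left-hand side.

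Concretely, I would argue as follows. Suppose $u_1,u_2\in C^{1,2,\alpha}(\overline{\cO}_m)$ both solve \eqref{eq:penprob} with the common boundary datum $g_m$ on $\partial_P\cO_m$. Applying Proposition \ref{prp:probrap1} separately to each, one obtains
\begin{equation*}
u_1(t,x)\;=\;\overline{v}^{\eps,\delta}_m(t,x)\;=\;\underline{v}^{\eps,\delta}_m(t,x)\;=\;u_2(t,x), \qquad \forall\,(t,x)\in\overline{\cO}_m,
\end{equation*}
which is precisely the claim. All of the technical work -- the application of Dynkin's formula, the pointwise Hamiltonian inequality \eqref{eq:Ham}, the choice of the penalising control $w^*$ in \eqref{eq:dfnoptw*}, and in particular the construction of the minimising pair $(n^*,\nu^*)$ in \eqref{eq:optcntr} (whose well-posedness relies on $u^{\eps,\delta}_m\in C^{1,2,\alpha}(\overline{\cO}_m)$) -- has already been carried out inside the proof of Proposition \ref{prp:probrap1} and applies verbatim to any classical solution.

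There is no genuine obstacle here: the corollary is really a bookkeeping remark, and no PDE comparison principle of the type usually needed for uniqueness of penalised fully nonlinear problems has to be invoked, because the verification argument already pins down the value of any solution to the single game value on $\overline{\cO}_m$.
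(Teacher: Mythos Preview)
Your proposal is correct and matches the paper's own approach exactly: the paper simply states that uniqueness follows ``from the probabilistic representation of $u^{\eps,\delta}_m$ in \eqref{eq:probrap}'', which is precisely the argument you spell out.
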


\subsection{Quadratic growth and stability}\label{sec:growth}
Here we establish growth and stability results for $u^{\eps,\delta}_m$.
\begin{lemma}\label{lem:polygrow}
Let $u^{\varepsilon,\delta}_m $ be a solution of Problem \ref{prb:penprob}. Then, there is a constant $K_3>0$ independent of $\eps,\delta,m$ such that
\begin{align}\label{eq:lem-ling}
0\leq u^{\varepsilon,\delta}_m (t,x)\leq K_3(1+|x|_d^2),\qquad\text{for all $(t,x)\in\overline{\cO}_m$}.
\end{align}
\end{lemma}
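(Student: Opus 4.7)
The plan is to leverage the two probabilistic representations established in Proposition~\ref{prp:probrap1}, namely $u^{\varepsilon,\delta}_m=\overline v^{\varepsilon,\delta}_m=\underline v^{\varepsilon,\delta}_m$ on $\overline\cO_m$. The lower bound $u^{\varepsilon,\delta}_m\geq 0$ follows at once from the lower-value representation by choosing $w\equiv 0$: the resulting payoff $\cJ^{\varepsilon,\delta,m}_{t,x}(n,\nu,0)$ is an expectation of non-negative quantities, since $g_m,h_m\geq 0$ by construction and $H^{\varepsilon}_m(\cdot,y)\geq 0$ for every $y\in\R^d$ (take $p=\mathbf{0}$ in \eqref{eq:hmltn}). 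Hence $\underline v^{\varepsilon,\delta}_m\geq 0$.

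\textbf{Upper bound via an It\^o cancellation.} For the quadratic upper bound I would evaluate the upper value at a suboptimal choice of the minimiser: fix any unit vector $n^\circ\in\R^d$ and take $\nu\equiv 0$. Then $X^{[n^\circ,0]}$ reduces to the uncontrolled SDE $X$, and $H^{\varepsilon}_m(\cdot,0)=0$ because the sup in \eqref{eq:hmltn} is attained at any $p$ with $|p|_d^2\leq f_m^2$. This gives
\begin{align*}
u^{\varepsilon,\delta}_m(t,x)\leq \sup_{w\in\cT^\delta_t}\E_x\Big[\int_0^{\rho_m}R^w_s\big(h_m+w_s g_m\big)(t+s,X_s)\,\ud s+R^w_{\rho_m}g_m(t+\rho_m,X_{\rho_m})\Big].
\end{align*}
The problematic term is $w_s g_m$ with $w_s\leq 1/\delta$, which would naively blow up as $\delta\downarrow 0$. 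To absorb it, introduce the quadratic test function $\phi(x)\coloneqq K_1(1+|x|_d^2)$, which by Assumption~\ref{ass:gen2}(iv) dominates $g_m$ pointwise. Applying It\^o's formula to $R^w_s\phi(X_s)$ on $[0,\rho_m]$ (the stochastic integral is a true martingale because $|X_s|_d\leq m$ for $s\leq\rho_m$) and taking expectations gives the identity
\begin{align*}
\E_x\Big[R^w_{\rho_m}\phi(X_{\rho_m})+\int_0^{\rho_m}R^w_s w_s\phi(X_s)\,\ud s\Big]=\phi(x)+\E_x\Big[\int_0^{\rho_m}R^w_s(\cL\phi-r\phi)(X_s)\,\ud s\Big].
\end{align*}
Bounding $g_m$ by $\phi$ in the original inequality and substituting, the $w$-dependent contribution $\int R^w_s w_s g_m\,\ud s + R^w_{\rho_m}g_m$ is replaced by $\phi(x)+\int R^w_s(\cL\phi-r\phi)\,\ud s$, whose supremum over $w$ is controlled uniformly in $\delta$ via $R^w_s\leq 1$.

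\textbf{Closing the estimate.} It remains to show that each term on the right has at most quadratic growth in $x$, with constants independent of $\varepsilon,\delta,m$. Direct computation using Assumption~\ref{ass:gen1} (linear growth of $b,\sigma$) yields $|\cL\phi(x)-r\phi(x)|\leq C(1+|x|_d^2)$ for some $C=C(K_1,D_1,d,r)$; Assumption~\ref{ass:gen2}(iv) gives $0\leq h_m\leq h\leq K_1(1+|x|_d^2)$; and the standard Gronwall argument applied to the uncontrolled SDE produces $\E_x[|X_s|_d^2]\leq c_T(1+|x|_d^2)$ for all $s\in[0,T]$, with $c_T$ depending only on $T$ and $D_1$. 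Assembling these bounds via $R^w_s\leq 1$ and using the trivial $\sup_w\E[\cdots]\leq \E[(\cdots)^+]$ where needed, each contribution is dominated by a multiple of $1+|x|_d^2$, giving the claim with a constant $K_3$ that depends only on $K_1,D_1,T,r,d$. The principal obstacle in the proof is precisely the $\delta$-uniformity: it is resolved entirely by the It\^o identity above, which trades the dangerous $w_s\phi$ term for $\phi(x)$ plus $w$-independent expressions.
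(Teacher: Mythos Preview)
Your proof is correct, and the lower bound is essentially the same as the paper's. For the upper bound, both arguments begin by choosing the null control so that the Hamiltonian term vanishes, but the route to $\delta$-uniformity differs.

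The paper does not use an It\^o identity. Instead, it bounds $h_m,g_m\le K_1(1+|X_s|_d^2)$, pulls the pathwise supremum $1+\sup_{s}e^{-rs}|X_s^{[0,0]}|_d^2$ out of the time integral, and is left with the purely deterministic (in $\omega$) factor $1+\int_0^{\rho_m}e^{-\int_0^s w_\lambda\,\ud\lambda}[1+w_s]\,\ud s$. The latter is bounded by $2+T$ for every $w$ because $\int_0^{\rho_m}e^{-\int_0^s w_\lambda\,\ud\lambda}w_s\,\ud s=1-e^{-\int_0^{\rho_m}w_\lambda\,\ud\lambda}\le 1$. Standard SDE moment bounds then finish.

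Your approach replaces this pathwise calculus identity by Dynkin's formula applied to $R^w_s\phi(X_s)$ with $\phi(x)=K_1(1+|x|_d^2)$: the identity $\E_x[R^w_{\rho_m}\phi(X_{\rho_m})+\int_0^{\rho_m}R^w_s w_s\phi\,\ud s]=\phi(x)+\E_x[\int_0^{\rho_m}R^w_s(\cL\phi-r\phi)\,\ud s]$ absorbs the $w_s\phi$ term exactly, and the residual is controlled via $R^w_s\le 1$ and the positive-part bound. The paper's argument is shorter and entirely elementary (an ODE computation); your Lyapunov-type argument is a bit more systematic and would carry over to settings (e.g., state-dependent or unbounded discount rates) where the direct integration is not available.
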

\begin{proof}
Since $f_m$, $g_m$ and $h_m$ are non-negative, then by \eqref{eq:probrap} we get $u^{\eps,\delta}_m\ge 0$. For the upper bound we pick the control pair $(n,\nu)=(e_1,0)$ (with $e_1$ the unit vector with $1$ in the first coordinate) in \eqref{eq:Jpen} and notice that $H^{\varepsilon}_m(t+s,X_s^{[e_1,0]},0)=0$. Then
\begin{align}\label{eq:ling0}
u^{\eps,\delta}_m(t,x)&\leq \sup_{w\in \mathcal{T}^\delta}\E_{x}\biggr[\int_0^{\rho_{m}}\!R^w_s\big[ h_m\!+\! w_sg_m\big](t\!+\!s,X_s^{[e_1,0]})\,\ud s\!+\!R^w_{\rho_m}g_m(t\!+\!\rho_{m} ,X_{\rho_{m}}^{[e_1,0]})\biggr]\notag\\
&\le K_1 \sup_{w\in \mathcal{T}^\delta}\E_{x}\biggr[\Big(1+\sup_{0\le s\le T-t} e^{-rs}\big|X^{[e_1,0]}_s\big|_d^2\Big)\Big(1+\int_0^{\rho_{m}}\!e^{-\int_0^s w_\lambda \ud \lambda}\big[1+w_s\big]\,\ud s\Big)\biggr],\notag
\end{align}
where the inequality is using the quadratic growth of $h_m$ and $g_m$ (see \eqref{eq:fghpolgrow}) and the definition of $R^w$. For $\P$-a.e.\ $\omega$ we have the simple bound
\begin{align*}
\int_0^{\rho_m(\omega)}\!e^{-\int_0^s w_\lambda(\omega) \ud \lambda}\big[1+w_s(\omega)\big]\,\ud s\le 1+T.
\end{align*}
Therefore
\[
u^{\eps,\delta}_m(t,x)\le K_1{(2+T)}\E_x\biggr[1+\sup_{0\le s\le T-t} e^{-rs}\big|X^{[e_1,0]}_s\big|_d^2\biggl]\le K_3(1+|x|_d^2),
\]
by standard estimates for SDEs with coefficients with linear growth (\cite[Cor.\ 2.5.10]{krylov1980controlled}) and the constant $K_3>0$ depends only on $T$, $D_1$ and $K_1$ in Assumptions \ref{ass:gen1} and \ref{ass:gen2}.  
\end{proof}

\begin{remark}\label{rem:locbndu}
In particular, \eqref{eq:lem-ling} implies that for any $m\geq m_0\in \N$ and $\eps,\delta\in(0,1)$
\begin{align*}
\big\|u^{\eps,\delta}_m\big\|_{C^0(\overline\cO_{m_0})}\leq K_3(1+|m_0|^2) \eqqcolon M_1(m_0).
\end{align*}
\end{remark}
The next result relies upon standard PDE arguments. Its proof is in Appendix for completeness.
\begin{lemma}\label{lem:stability}
Let $u^{\eps,\delta}_m$ be a solution of Problem \ref{prb:penprob}. Let $u^n\in C^{\infty}(\overline{\cO}_m)$ and $\chi_n\in C^{\infty}(\R)$ be such that $\chi_n(0)=0$, $\chi_n'\ge 0$, $(\chi_n)_{n\in\N}$ are equi-Lipschitz, and
\[
\|u^n-u^{\eps,\delta}_m\|_{C^{1,2,\gamma}(\overline{\cO}_m)}+\|\chi_n-(\cdot)^+\|_{C^0(\R)}\leq \tfrac{1}{n},\quad n\in\N,
\] 
for some $\gamma\in(0,\alpha)$.
Then, there exists a unique solution $w^n\in C^{1,2,\alpha}(\overline{\cO}_m)$ of
\begin{align}\label{eq:PDEsmth2}
\begin{cases}\partial_tw^n\!+\!\mathcal{L}w^n\!-\!rw^n=\!-h_m\!-\!\frac{1}{\delta}\chi_n\big(g_m\!-\!u^{\eps,\delta}_m\big)\!+\!\psi_{\varepsilon}(|\nabla u^n|^2_d\!-\!f^2_m\!-\!\tfrac{1}{n}), & \text{ on }\cO_m, \\
w^n(t,x)=g_m(t,x), & (t,x)\in\partial_P \cO_m.
\end{cases}
\end{align}
Moreover, $w^n\in C^{1,3,\alpha}_{Loc}(\cO_m)$ and $w^n\to u^{\eps,\delta}_m$ in $C^{1,2,\beta}(\overline{\cO}_m)$ as $n\to\infty$, for all $\beta\in(0,\alpha)$.
\end{lemma}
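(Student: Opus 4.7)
First, observe that \eqref{eq:PDEsmth2} is \emph{linear} in the unknown $w^n$: its right-hand side
\[
F^n(t,x) := -h_m(t,x) - \tfrac{1}{\delta}\chi_n\bigl((g_m - u^{\eps,\delta}_m)(t,x)\bigr) + \psi_\eps\bigl(|\nabla u^n(t,x)|_d^2 - f_m^2(t,x) - \tfrac{1}{n}\bigr)
\]
depends only on known functions. For each fixed $n$, smoothness of $\chi_n$, $\psi_\eps$ and $u^n$ together with $g_m, f_m^2, u^{\eps,\delta}_m\in C^{1,2,\alpha}(\overline{\cO}_m)$ give $F^n\in C^{\alpha}(\overline{\cO}_m)$. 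Under Assumption \ref{ass:gen1} the coefficients of $\mathcal{L}$ are $C^1(\overline{B}_m)$ with $a$ uniformly elliptic on $\overline{B}_m$; the boundary datum $g_m\in C^{1,2,\alpha}$ satisfies $g_m\equiv 0$ on $[0,T]\times\partial B_m$ (since $\xi_{m-1}$ vanishes on $\R^d\setminus B_m$), which gives trivial compatibility at the parabolic corner $\{T\}\times\partial B_m$. Classical linear parabolic Schauder theory then yields existence and uniqueness of $w^n\in C^{1,2,\alpha}(\overline{\cO}_m)$.

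For the interior regularity $w^n\in C^{1,3,\alpha}_{Loc}(\cO_m)$, I would differentiate \eqref{eq:PDEsmth2} formally in $x_k$. The function $W^n_k:=\partial_{x_k}w^n$ solves the linear parabolic equation
\[
\partial_t W^n_k + \mathcal{L}W^n_k - r W^n_k = \partial_{x_k}F^n - \tfrac{1}{2}\sum_{i,j}(\partial_{x_k}a_{ij})\partial_{x_i x_j}w^n - \sum_i (\partial_{x_k}b_i)\,\partial_{x_i}w^n,
\]
whose RHS is locally $\alpha$-Hölder in $\cO_m$: $\partial_{x_k}a_{ij}$ and $\partial_{x_k}b_i$ are locally Lipschitz by Assumption \ref{ass:gen1}; the derivatives of $w^n$ belong to $C^{\alpha}(\overline{\cO}_m)$ by the previous step; and $\partial_{x_k}F^n\in C^{\alpha}_{Loc}$ thanks to smoothness of $u^n$ and $\chi_n$ and to $g_m, f_m^2, u^{\eps,\delta}_m\in C^{1,2,\alpha}$. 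Interior Schauder estimates then give $W^n_k\in C^{1,2,\alpha}_{Loc}(\cO_m)$, i.e.\ $w^n\in C^{1,3,\alpha}_{Loc}(\cO_m)$.

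For convergence, set $v^n:=w^n - u^{\eps,\delta}_m$. Subtracting the PDE in Problem \ref{prb:penprob} from \eqref{eq:PDEsmth2}, $v^n$ solves a linear parabolic PDE on $\cO_m$ with zero parabolic-boundary data and forcing
\[
G^n := \tfrac{1}{\delta}\bigl[(g_m-u^{\eps,\delta}_m)^+ - \chi_n(g_m-u^{\eps,\delta}_m)\bigr] + \bigl[\psi_\eps(|\nabla u^n|_d^2 - f_m^2 - \tfrac{1}{n}) - \psi_\eps(|\nabla u^{\eps,\delta}_m|_d^2 - f_m^2)\bigr].
\]
The hypothesis $\|\chi_n-(\cdot)^+\|_{C^0(\R)}\le 1/n$ and the equi-Lipschitz property of $(\chi_n)$, together with $g_m-u^{\eps,\delta}_m\in C^{1,2,\alpha}$, give that the first bracket tends to $0$ in $C^0(\overline{\cO}_m)$ and stays uniformly bounded in $C^{\alpha}(\overline{\cO}_m)$ by Lipschitz composition. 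The hypothesis $\|u^n-u^{\eps,\delta}_m\|_{C^{1,2,\gamma}}\le 1/n$, together with $\psi_\eps\in C^2$ and the mean-value theorem, forces the second bracket to tend to $0$ in $C^{\gamma}(\overline{\cO}_m)$. The maximum principle then gives $v^n\to 0$ in $C^0(\overline{\cO}_m)$, while Schauder estimates with zero boundary data yield a uniform $C^{1,2,\alpha}$-bound on $v^n$, provided one chooses $u^n$ (e.g.\ by mollifying $u^{\eps,\delta}_m$) so that $(|\nabla u^n|_d^2)$ stays bounded in $C^{\alpha}(\overline{\cO}_m)$. A classical Hölder interpolation of the form $\|\cdot\|_{C^{1,2,\beta}}\le C\|\cdot\|_{C^0}^{\theta}\|\cdot\|_{C^{1,2,\alpha}}^{1-\theta}$ then upgrades $C^0$-convergence to $v^n\to 0$ in $C^{1,2,\beta}(\overline{\cO}_m)$ for every $\beta\in(0,\alpha)$. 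The main obstacle I anticipate lies precisely here: attaining the full range $\beta\in(0,\alpha)$ (rather than only $\beta<\gamma$) requires the uniform $C^{1,2,\alpha}$-bound on $v^n$, which forces a careful choice of the smooth approximation $u^n$ rather than merely invoking the $C^{1,2,\gamma}$-hypothesis.
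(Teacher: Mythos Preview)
Your argument is essentially correct and parallels the paper's for existence, uniqueness, and interior regularity. One point you glossed over: the compatibility condition at $\{T\}\times\partial B_m$ is not entirely ``trivial''. Besides $g_m=\nabla g_m=D^2 g_m=h_m=0$ there (all derivatives of $\xi_{m-1}$ vanish on $\partial B_m$) and $\chi_n(0)=0$, one must also verify that the $\psi_\eps$-term vanishes. The paper notes that $\nabla u^{\eps,\delta}_m(T,\cdot)=\nabla g_m(T,\cdot)=\mathbf{0}$ on $\partial B_m$, so the approximation bound on $\nabla u^n$ forces $|\nabla u^n(T,x)|_d^2\le f_m^2(T,x)+\tfrac{1}{n}$ there; this is precisely why the $-\tfrac{1}{n}$ shift is inserted in the argument of $\psi_\eps$.

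For convergence the paper takes a more direct route than your maximum-principle-plus-interpolation scheme: it applies the Schauder estimate
\[
\|w^n-u^{\eps,\delta}_m\|_{C^{1,2,\gamma}(\overline\cO_m)}\le K\,\|G^n\|_{C^{\gamma}(\overline\cO_m)}
\]
for each $\gamma\in(0,\alpha)$ (this is packaged as Lemma~\ref{lem:approxpenprob}) and shows $\|G^n\|_{C^\gamma}\to 0$ via a compactness argument---equi-Lipschitz composition makes the relevant sequences precompact in $C^\gamma$, so $C^0$-convergence upgrades automatically. This bypasses the maximum principle entirely. Your closing concern about attaining the full range $\beta\in(0,\alpha)$ from a hypothesis stated only for ``some $\gamma$'' is well taken: the paper's auxiliary Lemma~\ref{lem:approxpenprob} in fact \emph{assumes} convergence of the data for all $\gamma\in(0,\alpha)$, so the same subtlety is present there. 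In every application the $u^n$ are obtained by mollifying $u^{\eps,\delta}_m\in C^{1,2,\alpha}$, which does deliver convergence for all $\gamma<\alpha$, so the discrepancy is cosmetic rather than substantive.
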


\subsection{Gradient bounds}\label{sec:grd}

Our next goal is to find a bound for the norm of the gradient of $u^{\eps,\delta}_m$ uniformly in $\eps,\delta$. We start by considering an estimate on the parabolic boundary $\partial_P\cO_m$ that will be later used to bound $u^{\eps,\delta}_m$ on the whole $\overline{\cO}_m$.
\begin{lemma}\label{lem:fntbndr}
Let $u^{\varepsilon,\delta}_m $ be a solution of Problem \ref{prb:penprob}. Then, there is $M_2=M_2(m)>0$ such that 
\begin{align}\label{eq:grdb}
\sup_{(t,x)\in \partial_P\cO_m}|\nabla u^{\varepsilon,\delta}_m (t,x)|_d\leq M_2,\quad\text{for all $\eps,\delta\in(0,1)$}.
\end{align}
\end{lemma}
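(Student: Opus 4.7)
I would split $\partial_P\cO_m = ([0,T)\times\partial B_m)\cup(\{T\}\times\overline B_m)$ and treat the two faces separately. On the terminal face the boundary condition gives $u^{\eps,\delta}_m(T,\cdot)=g_m(T,\cdot)$ pointwise, so $\nabla u^{\eps,\delta}_m(T,x)=\nabla g_m(T,x)$, which is bounded by $\|\nabla g_m\|_{C^0(\overline\cO_m)}$, a constant depending only on $m$ through the cutoff $\xi_{m-1}$ and the local regularity of $g$. On the lateral face, since $g_m=\xi_{m-1}g$ with $\xi_{m-1}\equiv 0$ on $\R^d\setminus B_m$, smoothness of $\xi_{m-1}$ forces both $\xi_{m-1}$ and $\nabla\xi_{m-1}$ to vanish on $\partial B_m$; hence $g_m=0$ and $\nabla g_m=0$ there. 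Since $u^{\eps,\delta}_m=g_m$ on the lateral face, its tangential gradient vanishes, and only the inward-normal derivative requires control.

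To bound that normal derivative I would construct a barrier on an annular collar $A_\eta:=B_m\setminus\overline B_{m-\eta}$ for small $\eta\in(0,m)$, taking the radial exponential
\[
\Phi(x):=C\bigl(1-e^{-\mu(m-|x|_d)}\bigr),
\]
with $C,\mu>0$ to be fixed in terms of $m$ alone (independent of $\eps,\delta$). It vanishes on $\partial B_m$, is smooth on $\overline A_\eta$, and a direct computation exploiting local ellipticity \eqref{eq:defnthetaEC} yields
\[
\cL\Phi(x)\le -\tfrac{C\mu^2}{2}e^{-\mu(m-|x|_d)}\theta_{B_m}+\Lambda_m C\mu,
\]
with $\Lambda_m$ controlled by $\sup_{\overline B_m}(|a|+|b|)$. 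I would fix $(\eta,\mu,C)$ so that: (i) $\Phi\ge g_m$ on $\overline A_\eta$, using the Lipschitz bound $g_m(t,x)\le\|\nabla g_m\|_{C^0(\overline\cO_m)}(m-|x|_d)$ near $\partial B_m$ against $\Phi(x)\sim C\mu(m-|x|_d)$, and $\Phi\ge C(1-e^{-\mu\eta})$ against $\|g_m\|_{C^0(\overline\cO_m)}$ further in; (ii) $\Phi\ge M_1(m)$ on $\partial B_{m-\eta}$, i.e.\ $C(1-e^{-\mu\eta})\ge M_1(m)$, which via Remark \ref{rem:locbndu} ensures $\Phi\ge u^{\eps,\delta}_m$ on $\partial B_{m-\eta}$; and (iii) $\partial_t\Phi+\cL\Phi-r\Phi+h_m\le 0$ on $A_\eta$, achieved by first taking $\eta$ small so that $\mu e^{-\mu\eta}$ dominates $\Lambda_m$ (optimising at $\mu\sim 1/\eta$), then $C$ large enough to absorb $h_m$ and $r\Phi$.

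Granted (i)-(iii), the obstacle-penalty $\tfrac{1}{\delta}(g_m-\Phi)^+$ vanishes and $\psi_\eps\ge 0$, so (iii) promotes $\Phi$ to a classical supersolution of \eqref{eq:penprob} on $A_\eta$ \emph{uniformly} in $(\eps,\delta)$. Setting $w:=\Phi-u^{\eps,\delta}_m$, the obstacle-penalty increment linearises to $-c(t,x)w$ for some $c\in[0,1]$ and the $\psi_\eps$-increment to $\langle\beta(t,x),\nabla w\rangle$ for a locally bounded vector $\beta$, so $w$ satisfies a linear parabolic inequality with non-positive zeroth-order coefficient $-(r+c/\delta)\le 0$. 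The parabolic maximum principle, with boundary data supplied by (i)-(ii) and the terminal inequality $\Phi(\cdot)\ge g_m(T,\cdot)$, yields $w\ge 0$ on $[0,T)\times A_\eta$. Since $\Phi=u^{\eps,\delta}_m=0$ on $\partial B_m$ and both are non-negative on $A_\eta$, the inward-normal derivative satisfies $\partial_{\hat n_{\text{in}}}u^{\eps,\delta}_m(t_0,x_0)\le\partial_{\hat n_{\text{in}}}\Phi(x_0)=C\mu$, independently of $(\eps,\delta)$. Combined with the terminal bound this gives $M_2=M_2(m)$ as required. The main obstacle is designing a single barrier that remains a supersolution \emph{uniformly} as $(\eps,\delta)\downarrow(0,0)$: the trick is to arrange $\Phi\ge g_m$ to kill the obstacle-penalty entirely and to exploit $\psi_\eps\ge 0$ to drop the gradient-penalty, reducing the check to a purely linear inequality where local ellipticity can be decisive.
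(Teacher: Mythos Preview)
Your proof is correct but follows a genuinely different route from the paper's. On the terminal face both arguments are identical. On the lateral face, the paper avoids barriers entirely and instead exploits the probabilistic game representation from Proposition~\ref{prp:probrap1}: writing $u(t,x)-u(t,y)\le \|\nabla g_m\|_{C^0}|x-y|+\big(u(t,x)-g_m(t,x)\big)$ for $y\in\partial B_m$, it subtracts a Dynkin-formula representation of $g_m$ from the game value of $u^{\eps,\delta}_m$, chooses the null control $(n,\nu)=(\mathbf 0,0)$ so the Hamiltonian vanishes, and obtains $u(t,x)-g_m(t,x)\le \|\Theta_m\|_{C^0(\overline\cO_m)}\,\E_x[\rho_{\cO_m}]$ with $\Theta_m=\partial_t g_m+\cL g_m-rg_m+h_m$. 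The key is then that $\E_x[\rho_{\cO_m}]\le \pi(x)$ where $\pi$ solves $\cL\pi=-1$ in $B_m$, $\pi=0$ on $\partial B_m$; elliptic regularity gives $\pi\in C^2(\overline B_m)$, hence $\pi(x)=\pi(x)-\pi(y)\le L_{\pi,m}|x-y|$, yielding the Lipschitz bound at the boundary with constant $\|\nabla g_m\|_{C^0}+\|\Theta_m\|_{C^0}L_{\pi,m}$.

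Your Hopf-type barrier is the classical PDE counterpart. The point you exploit---that $\psi_\eps\ge 0$ and $(g_m-\Phi)^+=0$ once $\Phi\ge g_m$, so the supersolution check reduces to the \emph{linear} inequality $\partial_t\Phi+\cL\Phi-r\Phi+h_m\le 0$ uniformly in $(\eps,\delta)$---is exactly right and is what makes the barrier independent of the penalisation parameters; the subsequent linearisation with coefficients $c\in[0,1]$ and $\beta$ bounded (for each fixed $(\eps,\delta)$, which is all the comparison principle needs) is sound. The paper's approach is shorter because the probabilistic representation and the bound $u\le \cJ^{\eps,\delta,m}_{t,x}(\mathbf 0,0,w)$ are already available; in effect, the exit-time function $\pi$ plays the role of your barrier. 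Your approach is more self-contained and would transfer to settings where a stochastic representation is unavailable.
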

\begin{proof}
For simplicity we denote $u=u^{\eps,\delta}_m$. If $t=T$ we have $u(T,x)=g_m(T,x)$ for $x\in\overline{B}_m$ and the bound is trivial because $\nabla u(T,x)=\nabla g_m(T,x)$. 

Next, let $t\in[0,T)$. Notice that $u|_{\partial B_m}=g_m|_{\partial B_m}=0$. Fix $x\in B_m$ and $y\in\partial B_m$. Then
\begin{align}\label{eq:lipscond1}
0\leq u(t,x)-u(t,y)=&\,u(t,x)-g_m(t,y)\le \|\nabla g_m\|_{C^0(\overline{\cO}_m)}|x-y|_d+u (t,x)-g_m(t,x). 
\end{align}
For arbitrary $(n,\nu)\in\cA^\circ_t$ and $w\in\cT^\delta_t$, Dynkin's formula gives
\begin{align}\label{eq:gmprobrep}
g_m(t,x)=\E_x\biggr[&\,R^w_{\rho_m}g_m(t+\rho_{m},X_{\rho_{m}}^{[n,\nu]})\!-\!\int_{0}^{\rho_{m}}\! R^w_s\langle n_s\dot{\nu}_s,\nabla g_m(t+s,X_s^{[n,\nu]})\rangle\,\ud s\notag\\
&\,-\!\int_{0}^{\rho_{m}}\!R^w_s\big[(\partial_tg_m\!+\!\mathcal{L}g_m\!-\!rg_m)(t\!+\!s,X_s^{[n,\nu]})\!-\!w_s g_m(t\!+\!s,X_s^{[n,\nu]})\big]\,\ud s\biggr].
\end{align}
Then, setting $\Theta_m=\partial_tg_m\!+\!\mathcal{L}g_m\!-\!rg_m\!+\!h_m$ and recalling \eqref{eq:probrap}, we can write
\begin{align*}
&u (t,x)-g_m(t,x)\\
&=\inf_{(n,\nu)\in\cA^\circ_t}\sup_{w\in\cT^\delta_t}\E_x\biggr[\int_{0}^{\rho_{m}}\!\!R^w_s\Big(\big[\Theta_m\!+\!\langle n_s\dot{\nu}_s,\nabla g_m\rangle\big]\!+\!H^{\varepsilon}_m(\cdot,n_s\dot{\nu}_s)\Big)(t\!+\!s,X_s^{[n,\nu]})\,\ud s\biggr].
\end{align*}
Picking $(n,\nu)=(e_1,0)$ (with $e_1$ the unit vector with $1$ in the first coordinate) and recalling that $H^{\eps}_m(\cdot,0)=0$ and $\rho_m=\rho_{\cO_m}$ we obtain the upper bound
\[
u(t,x)-g_{m}(t,x)\le \sup_{w\in\cT^\delta_t}\E_x\biggr[\int_{0}^{\rho_{m}}\!R^w_s\Theta_m(t\!+\!s,X_s^{[e_1,0]})\,\ud s\biggr]\le 
\big\| \Theta_m\big\|_{C^0(\overline\cO_m)} \E_x\left[\rho_{\cO_m} \right].
\]
Combining the latter with \eqref{eq:lipscond1} we obtain 
\[
0\le u (t,x)\!-\!u (t,y)\leq \|\nabla g_m\|_{C^0(\overline{\cO}_m)}|x\!-\!y|_d\!+\!\|\Theta_m\|_{C^0(\overline\cO_m)}\E_x\left[\rho_{\cO_m} \right].
\]

Since $\rho_{\cO_m}=\rho_{\cO_m}(t,x;e_1,0)$ is associated to the control pair $(n,\nu)=(e_1,0)$, then
\[
\rho_{\cO_m}=\inf\{s\ge 0\,|\, X^{[e_1,0]}_s\notin B_m \}\wedge(T-t)=:\tau_m\wedge(T-t),
\]
and clearly $\E_x[\rho_{\cO_m}]\le \E_x[\tau_m]=:\pi(x)$.
It is well-known that $\pi\in C^2(\overline B_m)$ and it solves 
\begin{align*}
\mathcal{L}\pi(x)=-1\quad \text{for $x\in B_m$}\quad\text{with}\quad\pi(x)=0 \quad \text{for $x\in\partial B_m$},
\end{align*}
by uniform ellipticity of $\cL$ on $B_m$ (see \cite[Thm.\ 6.14]{gilbarg2015elliptic}). That is sufficient to conclude 
\begin{align*}
\E_x\left[\rho_{\cO_m}\right]\le \pi(x)=\pi(x)-\pi(y)\leq L_{\pi,m}|x-y|_d,
\end{align*}
for some constant $L_{\pi,m}>0$ depending only on the coefficients of $\mathcal{L}$ and the radius $m$. Then, for all $t\in[0,T)$ we have
$0\le u(t,x)-u(t,y)\leq M_2|x-y|_d$,
with $M_2= \|\nabla g_m\|_{C^0(\overline{\cO}_m)}+\|\Theta_m\|_{C^0(\overline\cO_m)} L_{\pi,m}$. This implies \eqref{eq:grdb} because $u\in C^{1,2,\alpha}(\overline\cO_m)$.
\end{proof}

Using Lemma \ref{lem:fntbndr} we can also provide a bound on $|\nabla u^{\varepsilon,\delta}_m|_d$ in the whole domain $\overline{\cO}_m$. It is useful to recall that a function $\varphi \in C^{1,2}(\overline{\cO}_m)$ attaining a maximum at a point  $(t_0,x_0)\in\cO_m$ also satisfies 
\begin{align}\label{eq:MaxP}
\cL\varphi(t_0,x_0)+\partial_t\varphi(t_0,x_0)\leq0,
\end{align}
by the maximum principle (see \cite[Lemma 2.1]{friedman2008partial}). Since $(t_0,x_0)\in\cO_m$, then $\nabla \varphi(t_0,x_0)={\bf 0}$ and, when $t_0\in(0,T)$, also $\partial_t\varphi(t_0,x_0)=0$. 
\begin{proposition}\label{prop:locgradp}
Let $u^{\varepsilon,\delta}_m $ be a solution of Problem \ref{prb:penprob}. Then, there is $M_3=M_3(m)$ such that 
\begin{align}\label{eq:unifbndgrad}
\sup_{(t,x)\in\overline{\cO}_m}|\nabla u^{\varepsilon,\delta}_m(t,x)|_d\leq M_3,\quad\text{for all $\eps,\delta\in(0,1)$}. 
\end{align} 
\end{proposition}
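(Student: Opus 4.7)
The plan is to apply a Bernstein-type maximum-principle argument, in the spirit of Evans \cite{evans1979second}, adapted to the simultaneous presence of gradient and obstacle penalties. Since $u^{\eps,\delta}_m$ only belongs to $C^{1,2,\alpha}(\overline{\cO}_m)$, it cannot be differentiated twice in space. I would therefore first invoke Lemma \ref{lem:stability} to obtain approximations $w^n\in C^{1,3,\alpha}_{Loc}(\cO_m)\cap C^{1,2,\alpha}(\overline{\cO}_m)$ solving \eqref{eq:PDEsmth2} and converging to $u^{\eps,\delta}_m$ in $C^{1,2,\beta}(\overline{\cO}_m)$. The bound \eqref{eq:unifbndgrad} will then follow by passing to the limit, so the task reduces to bounding $\|\nabla w^n\|_{C^0(\overline{\cO}_m)}$ by a constant depending only on $m$ and not on $n,\eps,\delta$.

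Set $v^n\coloneqq |\nabla w^n|_d^2$. Differentiating \eqref{eq:PDEsmth2} in $x_k$, multiplying by $2 w^n_{x_k}$ and summing over $k=1,\ldots,d$ produces a linear parabolic equation for $v^n$ schematically of the form
\begin{align*}
(\partial_t+\cL-2r)v^n = 2\sum_{i,j=1}^d a_{ij}(x)\,\langle\nabla w^n_{x_i},\nabla w^n_{x_j}\rangle + \cR_0 + \cR_\delta + \cR_\eps,
\end{align*}
where the Hessian-quadratic term is bounded below by $2\theta_m$ times the Hilbert--Schmidt norm squared of $D^2 w^n$ via Assumption \ref{ass:gen1}; $\cR_0$ collects remainders controlled on $\overline{\cO}_m$ by $C_1(1+v^n)$ using Assumption \ref{ass:gen2}(i); $\cR_\delta$ is the obstacle-penalty contribution $-\tfrac{2}{\delta}\chi_n'(g_m-u^{\eps,\delta}_m)\langle\nabla w^n,\nabla(g_m-u^{\eps,\delta}_m)\rangle$; and $\cR_\eps$ is the gradient-penalty contribution $2\psi_\eps'(|\nabla u^n|_d^2-f_m^2-\tfrac{1}{n})\bigl[\langle\nabla w^n,\nabla|\nabla u^n|_d^2\rangle - 2 f_m\langle\nabla w^n,\nabla f_m\rangle\bigr]$.

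The decisive step is the choice of an auxiliary function $\varphi^n\coloneqq v^n+\Phi(w^n)$ (for instance $\Phi(w)=\lambda(M-w)^2$ with $M\ge \|u^{\eps,\delta}_m\|_{C^0(\overline{\cO}_m)}$ furnished by Lemma \ref{lem:polygrow}, and $\lambda>0$ to be tuned) and the application of the maximum principle \eqref{eq:MaxP} to $\varphi^n$. If the maximum of $\varphi^n$ is attained on $\partial_P\cO_m$ the desired bound follows directly from Lemma \ref{lem:fntbndr} and \eqref{eq:lem-ling}. Otherwise, at an interior maximum $(t_0,x_0)\in\cO_m$, the identity $\nabla\varphi^n(t_0,x_0)=0$ forces $\nabla v^n$ to be proportional to $\nabla w^n$ with a prescribed sign. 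Combined with $\psi_\eps'\ge0$ and $\chi_n'\ge 0$, this should either render the $\eps$- and $\delta$-dependent contributions in $\cR_\eps$ and $\cR_\delta$ non-positive (in which case they are harmless on the right-hand side of the maximum-principle inequality) or allow them to be absorbed into the good Hessian term via Cauchy--Schwarz, using local ellipticity \eqref{eq:defnthetaEC}. A straightforward algebraic manipulation then yields $v^n(t_0,x_0)\le C(m)$ with $C(m)$ depending only on the data restricted to $\overline{\cO}_m$.

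The hardest part will be arranging the cancellation in $\cR_\eps$ \emph{and} $\cR_\delta$ simultaneously, uniformly in $\eps$ and $\delta$: because $\psi_\eps'$ can be as large as $1/\eps$ and $\chi_n'/\delta$ can be as large as $1/\delta$, any residual contribution that fails to vanish at the critical point or carry a favourable sign would destroy the uniform bound. A further delicate point is the mismatch between $u^n$ and $w^n$ appearing inside $\psi_\eps'(|\nabla u^n|_d^2-f_m^2-1/n)$ and $\chi_n'(g_m-u^{\eps,\delta}_m)$: one must exploit the quantitative closeness $\|u^n-u^{\eps,\delta}_m\|_{C^{1,2,\gamma}(\overline{\cO}_m)}\le 1/n$ from Lemma \ref{lem:stability} so that the $n$-perturbations disappear in the final bound. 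Once $\|\nabla w^n\|_{C^0(\overline{\cO}_m)}\le M_3(m)$ is established with $M_3$ independent of $n,\eps,\delta$, the $C^{1,2,\beta}$-convergence $w^n\to u^{\eps,\delta}_m$ transfers the bound to $u^{\eps,\delta}_m$ and yields \eqref{eq:unifbndgrad}.
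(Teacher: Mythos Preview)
Your plan is essentially the paper's own strategy: approximate via $w^n$ from Lemma \ref{lem:stability}, apply a Bernstein-type maximum principle to an auxiliary function built from $|\nabla w^n|^2_d$, and exploit the stationary condition at an interior maximiser to neutralise the penalty contributions. Two points of comparison are worth noting.

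First, the paper's auxiliary function is the {\em linear} shift $v^\lambda=|\nabla u|^2_d-\lambda u$ (equivalently $|\nabla w^n|^2_d-\lambda u$ at the approximate level), not your quadratic $\Phi(w)=\lambda(M-w)^2$. With the linear choice the stationary relation reads $2\langle\nabla w^n,\nabla w^n_{x_k}\rangle=\lambda u_{x_k}$, which feeds directly into $\langle\nabla w^n,\nabla|\nabla u^n|^2_d\rangle\ge \lambda|\nabla u|^2_d+o_n(1)$. The paper then assumes WLOG $\psi'_\eps\ge 1$ (else $|\nabla u|^2_d<f_m^2+2\eps$) and {\em factors $\psi'_\eps$ out of the whole inequality}; this is what kills the $\eps$-dependence of the residual $-2\psi'_\eps\langle\nabla w^n,\nabla f_m^2\rangle$, which you would not be able to ``absorb into the good Hessian term'' uniformly in $\eps$. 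Your quadratic $\Phi$ can be made to work too, but only if you take $M$ strictly above $\|u\|_{C^0(\overline\cO_m)}$ so that $M-w^n$ stays bounded away from zero; otherwise the good term $2\lambda(M-w^n)\psi'_\eps|\nabla w^n|^2_d$ may vanish while the $\nabla f_m^2$ term does not.

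Second, and more importantly, the sign control for $\cR_\delta$ does {\em not} come from the stationary condition. The identity $\nabla v^n\propto \nabla w^n$ says nothing about $\langle\nabla w^n,\nabla(g_m-u)\rangle$. The paper instead observes that one may assume WLOG $\langle\nabla u,\nabla g_m\rangle<|\nabla u|^2_d$ at the maximiser (else $|\nabla u|_d\le|\nabla g_m|_d\le f_m$ and the bound is trivial); then $\langle\nabla w^n,\nabla(g_m-u)\rangle\le 0$ up to $o_n(1)$, so $-\tfrac{2}{\delta}\chi_n'\langle\nabla w^n,\nabla(g_m-u)\rangle\ge -\kappa_{\delta,m}\|\hat w^n\|_{C^{0,1,\gamma}}$ and the $\delta$-dependence disappears in the limit $n\to\infty$. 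This is the missing idea in your sketch; once inserted, your argument closes exactly as the paper's does.
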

\begin{proof}
This proof refines and extends arguments from \cite[Lemma A.2]{zhu1992generalized} (see also \cite[Lemma 3.4]{hynd2010partial} and \cite[Lemma 2.8]{kelbert2019hjb}). For simplicity we denote $u=u^{\eps,\delta}_m$.
{Let $\lambda_m\in(0,\infty)$ be a constant depending on $m$ but independent of $\eps,\delta$, which will be chosen later}. Let $v^\lambda\in C^{0,1,\alpha}(\overline \cO_m)$ be defined as 
\begin{align*}
v^\lambda(t,x)\coloneqq |\nabla u (t,x)|^2_d-\lambda u (t,x)
\end{align*}
for some $\lambda\in(0,\lambda_m]$. Recalling $M_1=M_1(m)$ from Remark \ref{rem:locbndu} we have for any $\lambda \in (0, \lambda_m]$
\begin{align}\label{eq:dradu<gradv}
\sup_{(t,x)\in\overline{\cO}_m}|\nabla u (t,x)|^2_d\leq \sup_{(t,x)\in\overline{\cO}_m} v^\lambda(t,x)+{\lambda_m} M_1(m). 
\end{align}
 
Let $(t^\lambda,x^\lambda)\in\overline{\cO}_m$ be a maximum point for $v^\lambda$. Two situations may arise: either $(t^\lambda,x^\lambda)\in\cO_m$ or $(t^\lambda,x^\lambda)\in\partial_P\cO_m$. If $(t^\lambda,x^\lambda)\in\partial_P\cO_m$, then by Lemmas \ref{lem:polygrow} and \ref{lem:fntbndr} we have
\begin{align*}
v^\lambda(t^\lambda,x^\lambda) \le |\nabla u (t^\lambda,x^\lambda)|^2_d\leq M_2^2\implies\sup_{(t,x)\in\overline{\cO}_m}|\nabla u (t,x)|^2_d\leq M_2^2+ {\lambda_m} M_1,
\end{align*}
where the implication follows by \eqref{eq:dradu<gradv}. Thus, if 
\begin{align}\label{eq:Lambdam}
\Lambda_m:=\{\lambda\in{(0,\lambda_m]}:\text{ there exists a maximiser of $v^\lambda$ in }\partial_P\cO_m\}\neq \varnothing, 
\end{align}
it is sufficient to pick $\lambda\in\Lambda_m$ and \eqref{eq:unifbndgrad} holds.

Let us now assume $\Lambda_m=\varnothing$ (i.e., $(t^\lambda,x^\lambda)\in\cO_m$ {for all $\lambda\in(0,\lambda_m]$}).
With no loss of generality:
\begin{align}\label{eq:assgradu1}
|\nabla u(t^\lambda,x^\lambda)|_d>1\quad\text{and}\quad \langle \nabla u(t^\lambda,x^\lambda),\nabla g_m(t^\lambda,x^\lambda)\rangle-|\nabla u(t^\lambda,x^\lambda)|^2_d < 0.
\end{align}
If either condition fails, then \eqref{eq:unifbndgrad} trivially holds. Likewise, we assume 
\begin{align}\label{eq:assgradfm}
\psi_\varepsilon'\big(|\nabla u (t^\lambda,x^\lambda)|^2_d- f_m^2(t^\lambda,x^\lambda)\big)>1
\end{align}
because $\psi_\varepsilon$ is convex and $\psi_\varepsilon'(r)=\frac{1}{\varepsilon}>1$ for $r\geq 2\varepsilon$. So, if \eqref{eq:assgradfm} fails, it must be $|\nabla u(t^\lambda,x^\lambda)|^2_d< f_m^2(t^\lambda,x^\lambda)+2\varepsilon$ and \eqref{eq:unifbndgrad} holds because $f_m$ is bounded on $\overline\cO_m$.

We would like to compute $\partial_t v+\cL v$ but the term containing $(\,\cdot\,)^+$ in the PDE for $u$ is not continuously differentiable and, therefore, it is not clear that $|\nabla u|^2_d$ admits classical derivatives. That is why we resort to an approximation procedure.
Let $u^n$, $\chi_n$ and $w^n$ be defined as in Lemma \ref{lem:stability}. Recall that $w^n\in C^{1,3,\alpha}_{Loc}(\cO_m)\cap C^{1,2,\alpha}(\overline{\cO}_m)$ and $w^n\to u$ in $C^{1,2,\gamma}(\overline{\cO}_m)$ for all $\gamma\in(0,\alpha)$. Define
\begin{align}\label{eq:v^ndefngrad}
v^{\lambda,n}(t,x)\coloneqq |\nabla w^n(t,x)|_d^2-\lambda u(t,x),
\end{align}
so that $v^{\lambda,n}\in C^{1,2,\alpha}_{Loc}(\cO_m)\cap C^{0,1,\alpha}(\overline \cO_m)$. Clearly, $v^{\lambda,n}\to v^\lambda$ uniformly on $\overline\cO_m$.

Let $(t_n^\lambda,x_n^\lambda)_{n\in\N}$ be a sequence with $(t_n^\lambda,x_n^\lambda)\in \argmax_{\overline{\cO}_m} v^{\lambda,n}$ for $n\in\N$. Since $\overline{\cO}_m$ is compact, the sequence admits a subsequence $(t_{n_k}^\lambda,x_{n_k}^\lambda)_{k\in\N}$ converging to some $(\tilde{t},\tilde{x})\in\overline{\cO}_m$. It is not hard to show that $(\tilde{t},\tilde{x})\in\argmax_{\overline{\cO}_m} v^\lambda$ (we provide the full argument in Appendix for completeness). Then, with no loss of generality we can assume $(t^\lambda,x^\lambda)=(\tilde{t},\tilde{x})$. That implies that we can choose $(t^{\lambda}_{n_k},x^{\lambda}_{n_k})_{k\in\N}\subset\cO_m$. More precisely, by continuity of $v^\lambda$, for any $\eta>0$ there exist bounded open sets $U_{\lambda,\eta}\subset\cO_m$ and $V_{\lambda,\eta}\subset B_m$ such that $(t^\lambda,x^\lambda)\in U_{\lambda,\eta}\cup\big(\{0\}\!\times\!V_{\lambda,\eta}\big)$ and $v^\lambda(t,x)> v^\lambda(t^\lambda,x^\lambda)-\eta$ for all $(t,x)\in U_{\lambda,\eta}\cup\big(\{0\}\!\times\!V_{\lambda,\eta}\big)$ (by convention, if $t^\lambda\neq 0$ we take $V_{\lambda,\eta}=\varnothing$). Moreover, for $k\in\N$ sufficiently large we have $(t^\lambda_{n_j},x^\lambda_{n_j})_{j\ge k}\subset U_{\lambda,\eta}\cup\big(\{0\}\!\times\!V_{\lambda,\eta}\big)$. 

From now on we simply relabel our subsequence by $(t^\lambda_n,x^\lambda_n)_{n\in\N}$ with a slight abuse of notation.
Since $(t^\lambda_n,x^\lambda_n)$ is a maximum point of $v^{\lambda,n}$ from the maximum principle (see \eqref{eq:MaxP}) we get
\begin{align}\label{eq:frstcndmax}
\mathcal{L} v^{\lambda,n}(t^\lambda_n,x^\lambda_n) +\partial_tv^{\lambda,n}(t^\lambda_n,x^\lambda_n) \leq 0.
\end{align}
Next we compute explicitly all terms in \eqref{eq:frstcndmax} and to simplify notation we drop the argument $(t^\lambda_n,x^\lambda_n)$. Denoting $\partial_t v=v_t$, $\partial_{x_i}v=v_{x_i}$ and $\partial_{x_i x_j}v=v_{x_i x_j}$, we obtain 
\begin{align}
\label{eq:bndgrd+var}
v_t^{\lambda,n}=&\,2\left\langle \nabla w^n,\nabla w^n_t \right\rangle-\lambda u_t;& \notag\\
v_{x_i}^{\lambda,n}=&\,2 \langle \nabla w^n, \nabla w^n_{x_i} \rangle-\lambda u_{x_i}, &1\leq i\leq d; \\
v_{x_ix_j}^{\lambda,n}=&\, 2\langle \nabla w^n_{x_i}, \nabla w^n_{x_j} \rangle+2 \langle \nabla w^n, \nabla w^n_{x_ix_j} \rangle-\lambda u_{x_ix_j}, &1\leq i,j\leq d. \notag
\end{align}
Substituting in \eqref{eq:frstcndmax} gives:
\begin{align}\label{eq:bndgrad1}
0\ge\sum_{i,j=1}^d a_{ij}\langle \nabla w^n_{x_i},\nabla w^n_{x_j} \rangle +2 \sum_{k=1}^d w^n_{x_k}(\partial_t w^n_{x_k} + \mathcal{L} w^n_{x_k})-\lambda(\partial_t u + \mathcal{L}u). 
\end{align}
From uniform ellipticity \eqref{eq:defnthetaEC} on $B_m$ and denoting {$\theta_{B_m}=\theta$} we have 
\begin{align}\label{eq:bndgrad2}
 \sum_{i,j=1}^da_{ij}\langle \nabla w^n_{x_i},\nabla w^n_{x_j} \rangle =&\,\sum_{k=1}^d\sum_{i,j=1}^da_{ij}w^n_{x_kx_i}w^n_{x_kx_j}\geq\sum_{k=1}^d \theta|\nabla w^n_{x_k}|^2_d\geq \theta |D^2 w^n|^2_{d\times d}.
\end{align}

To study the second term in \eqref{eq:bndgrad1} we introduce the differential operator $\mathcal{L}_{x_k}$: 
\begin{align}\label{eq:infgeneder}
(\cL_{x_k}\varphi)(x)=\tfrac{1}{2}\mathrm{tr}\big(a_{x_k}(x)D^2\varphi(x)\big)+\langle b_{x_k}(x),\nabla \varphi(x)\rangle,\quad\text{for $\varphi\in C^2(\R^d)$},
\end{align}
where $a_{x_k}\in \R^{d\times d}$ is the matrix with entries $(\partial_{x_k}a_{ij})^d_{i,j=1}$ and $b_{x_k}\in \R^d$ the vector with entries $(\partial_{x_k} b_i)_{i=1}^d$.
Differentiating with respect to $x_k$ the PDE in \eqref{eq:PDEsmth2} and rearranging terms we get
\begin{align}\label{eq:bndgrad3}
\partial_t w^n_{x_k} + \mathcal{L} w^n_{x_k}=&\, r w^n_{x_k}+ \psi_\varepsilon'(\bar\zeta_n)\cdot(|\nabla u^n|^2_d- f_m^2)_{x_k} \\
&\,- \tfrac{1}{\delta}\chi_n'(g_m-u)\cdot(g_m-u)_{x_k}-\partial_{x_k} h_m -\mathcal{L}_{x_k}w^n, \notag
\end{align}
where we set $\bar\zeta_n\coloneqq(|\nabla u^n|^2_d- f_m^2)(t^\lambda_n,x^\lambda_n)-\tfrac{1}{n}$ for the argument of $\psi_\varepsilon'$. 

In the third term of \eqref{eq:bndgrad1} we substitute \eqref{eq:penprob} and, combining with \eqref{eq:bndgrad2} and \eqref{eq:bndgrad3}, we obtain 
\begin{align}\label{eq:bndgrad5}
0\geq&\, \theta |D^2 w^n|^2_{d\times d}+2\Big[r|\nabla w^n|^2_d+\psi'_\eps(\bar \zeta_n)\langle \nabla w^n,\nabla(|\nabla u^n|^2_d- f_m^2)\rangle-\tfrac{1}{\delta}\chi_n'(g_m-u)\langle \nabla w^n,\nabla(g_m-u)\rangle\notag\\
&\qquad\qquad\qquad\quad-\langle\nabla w^n,\nabla h_m\rangle -\sum_{k=1}^d w^n_{x_k}\mathcal{L}_{x_k} w^n\Big]
-\lambda\big(r u +\psi_\varepsilon(\zeta_n)-\tfrac{1}{\delta}(g_m-u)^+-h_m\big),
\end{align}
where we set $\zeta_n\coloneqq(|\nabla u|^2_d- f_m^2)(t^\lambda_n,x^\lambda_n)$ for the argument of $\psi_\varepsilon$. 

Let us denote $\hat{w}^n=u-w^n$. Then $\|\hat{w}^n\|_{C^{1,2,\gamma}(\overline{\cO}_m)}\to 0$ as $n\to\infty$ because $w^n\to u$ in $C^{1,2,\gamma}(\overline{\cO}_m)$, {for all $\gamma\in(0,\alpha)$}. We claim that
\begin{align}\label{eq:bnduseproflate}
&2\Big[r|\nabla w^n|^2_d-\tfrac{1}{\delta}\chi_n'(g_m-u)\langle \nabla w^n,\nabla(g_m-u)\rangle-\langle\nabla w^n,\nabla h_m\rangle -\sum_{k=1}^d w^n_{x_k}\mathcal{L}_{x_k} w^n\Big]\\
&-\lambda\big(r u +\psi_\varepsilon(\zeta_n)-\tfrac{1}{\delta}(g_m-u)^+-h_m\big)\\
&\geq-C_1 |\nabla u|^2_d - \theta| D^2 w^n |^2_{d\times d}- C_2-\lambda r M_1-\lambda\psi_\varepsilon'(\bar\zeta_n)|\nabla u|^2_d-R_n,
\end{align}
for $M_1$ as in Remark \ref{rem:locbndu}, constants $C_1\!=\!C_1(m)\!>\!0$, $C_2\!=\!C_2(m)\!>\!0$ depending only on $m$ and
with 
\begin{align}\label{eq:defnRn1}
0\le R_n\le \kappa_{\delta,m}\Big(\|\hat{w}^n\|_{C^{0,1,\gamma}(\overline{\cO}_m)}+\lambda\big\|\psi_\varepsilon'(\zeta_n)-\psi_\varepsilon'(\bar\zeta_n)\big\|_{C^0(\overline{\cO}_m)}\Big),
\end{align}
where $\kappa_{\delta,m}>0$ depends on $\delta$, $\|\nabla u\|_{C^0(\overline\cO_m)}$ and $\|\nabla g_m\|_{C^0(\overline\cO_m)}$. Clearly $R_n\to 0$ as $n\to\infty$. For the ease of exposition the proof of \eqref{eq:bnduseproflate} is given separately at the end of this proof.

Plugging \eqref{eq:bnduseproflate} into \eqref{eq:bndgrad5} we obtain
\begin{align*}
0\ge 2 \psi_\varepsilon'(\bar\zeta_n)\langle \nabla w^n,\nabla(|\nabla u^n|^2_d- f_m^2)\rangle -C_1 |\nabla u|^2_d-C_2 -\lambda r M_1 -\lambda\psi_\varepsilon'(\bar\zeta_n)|\nabla u|^2_d-R_n.
\end{align*}
By \eqref{eq:assgradfm}, $\psi_\varepsilon'(\bar\zeta_n)\geq 1$ for large $n$. Then, multiplying both sides of the inequality by $-1$ we obtain
\begin{align*}
0\leq \psi_\varepsilon'(\bar\zeta_n)\bigg(\lambda|\nabla u|^2_d\!-\!2 \langle \nabla w^n,\nabla (|\nabla u^n|^2_d \!-\! f_m^2)\rangle \!+C_1|\nabla u|^2_d\!+\!C_2\! +\!\lambda r M_1\!+\!R_n\bigg).
\end{align*}
That implies
\begin{align}\label{eq:bndgrad6ea}
0\leq(C_1+\lambda)|\nabla u|^2_d\!-\!2\langle \nabla w^n,\nabla (|\nabla u^n|^2_d- f_m^2)\rangle\!+\!C_2\! +\!\lambda r M_1\!+\!R_n.
\end{align}

We claim that 
\begin{align}\label{eq:claimtoprove}
-2\langle \nabla w^n,\nabla (|\nabla u^n|^2_d- f_m^2)\rangle\leq -2\lambda|\nabla u|^2_d+2|\nabla u|_d|\nabla f_m^2|_d+\tilde{R}_n,
\end{align}
where $\tilde{R}_n=\tilde R_n(m,\eps,\delta)$ goes to zero as $n\to\infty$. Again, for ease of exposition the proof of \eqref{eq:claimtoprove} is given separately at end of this proof, after the one for \eqref{eq:bnduseproflate}. Thus \eqref{eq:bndgrad6ea} becomes
\begin{align*}
0\leq (C_1-\lambda)|\nabla u|^2_d+2|\nabla u|_d|\nabla f_m^2|_d +C_2 +\lambda r M_1+R_n+\tilde{R}_n.
\end{align*}
By definition of $f_m$ it is not hard to verify that $|\nabla f_m^2|_d\le C_3$ for a constant $C_3 = C_3 (m)>0$ independent of $\eps$ and $\delta$. Then from the inequality above we obtain
\begin{align*}
(\lambda-C_1)|\nabla u|^2_d\leq 2 C_3|\nabla u|_d+C_2 +\lambda r M_1+R_n+\tilde{R}_n.
\end{align*}
Choosing $\lambda=\bar \lambda \coloneqq C_1+1$ and recalling our shorthand notation $\nabla u=\nabla u (t^\lambda_n,x^\lambda_n)$, we have
\begin{align}\label{eq:bndgrad7baa}
|\nabla u(t^{\bar\lambda}_n,x^{\bar\lambda}_n)|_d^2 \leq&\, 2 C_3|\nabla u(t^{\bar\lambda}_n,x^{\bar\lambda}_n)|_d+C_2 + \bar \lambda r M_1+R_n+\tilde{R}_n.
\end{align}

Thanks to \eqref{eq:assgradu1} we have $|\nabla u(t^{\bar \lambda},x^{\bar \lambda})|_d> 1$. Thus, with no loss of generality we can assume that $|\nabla u(t,x)|_d\geq1$ for all $(t,x)\in\overline{U}_{\bar \lambda,\eta}\cup\big(\{0\}\!\times\!\overline{V}_{\bar \lambda,\eta}\big)$ and recall also that $(t^{\bar \lambda}_n,x^{\bar \lambda}_n)_{n\in\N}\subset U_{\bar\lambda,\eta}\cup\big(\{0\}\!\times\!V_{\bar \lambda,\eta}\big)$. Thus, dividing by $|\nabla u(t^{\bar \lambda}_n,x^{\bar \lambda}_n)|_d$ in \eqref{eq:bndgrad7baa} we get
\begin{align}\label{eq:bndgrad7b}
|\nabla u(t^{\bar \lambda}_n,x^{\bar \lambda}_n)|_d \leq&\, 2 C_3+C_2 + \bar \lambda r M_1+R_n+\tilde{R}_n.
\end{align}
From \eqref{eq:bndgrad7b} and the definition of $U_{\bar \lambda,\eta}\cup(\{0\}\times V_{\bar \lambda,\eta})$
\begin{align}\label{eq:vlbar}
\sup_{(t,x)\in\overline{\cO}_m} v^{\bar \lambda}(t,x)=&\,v^{\bar \lambda}(t^{\bar \lambda},x^{\bar \lambda})\leq v(t^{\bar \lambda}_n,x^{\bar \lambda}_n)\!+\!\eta \leq (2 C_3\!+\!C_2\! + \!\bar \lambda r M_1+R_n+\tilde{R}_n)^2+\eta.
\end{align}
Letting $n\to\infty$ we obtain 
\begin{align*}
\sup_{(t,x)\in\overline{\cO}_m} v^{\bar \lambda}(t,x)\leq(2 C_3+C_2 + \bar\lambda r M_1)^2+\eta.
\end{align*}
By the arbitrariness of $\eta$ and since \eqref{eq:dradu<gradv} holds for any $\lambda\in (0,\lambda_m]$, taking $\lambda_m=\bar \lambda$ we have
\begin{align*}
\sup_{(t,x)\in\overline{\cO}_m} |\nabla u(t,x)|^2_d\leq(2 C_3+C_2 + \bar\lambda r M_1)^2+\bar \lambda M_1.
\end{align*}
Hence, the proposition holds with $M_3=\! ((2 C_3\!+\!C_2\! +\! \bar \lambda rM_1)^2\!+\!\bar \lambda M_1)^{1/2}$ independent of $\varepsilon$ and $\delta$.
\end{proof}
\begin{proof}[{\bf Proof of} \eqref{eq:bnduseproflate}]
Recalling $\hat w^n=u-w^n$ we first notice 
\begin{align}\label{eq:wnrewritu}
|\nabla w^n|^2_d=&\,\left(|\nabla u|^2_d-2\langle\nabla u ,\nabla \hat{w}^n\rangle+|\nabla \hat{w}^n|^2_d\right)\\
\leq&\,|\nabla u|^2_d+\|\nabla \hat{w}^n \|_{C^0(\overline{\cO}_m)}\left(2\|\nabla u \|_{C^0(\overline{\cO}_m)}+\|\nabla \hat{w}^n \|_{C^0(\overline{\cO}_m)}\right).\notag
\end{align}
The first term on the left-hand side of \eqref{eq:bnduseproflate} is positive. For the second one, notice
\begin{align}
\langle\nabla w^n,\nabla(g_m-u)\rangle=&\langle\nabla (u-\hat w^n),\nabla(g_m-u)\rangle\\
=&\,\langle \nabla u,\nabla g_m\rangle-|\nabla u|^2_d-\langle \nabla \hat{w}^n,\nabla (g_m-u)\rangle\\
\le &\, \langle \nabla \hat{w}^n,\nabla (u-g_m)\rangle\le \|\hat{w}^n\|_{C^{0,1,\gamma}(\overline{\cO}_m)}( \|\nabla u\|_{C^0(\overline{\cO}_m)}+\|\nabla g_m\|_{C^0(\overline{\cO}_m)}),
\end{align}
where the first inequality holds by \eqref{eq:assgradu1} in $U_{\lambda,\eta}\cup(\{0\}\times V_{\lambda,\eta})$. Since $0\le \chi'_n\le 2$, then
\begin{align}\label{eq:grd00}
\tfrac{1}{\delta}\chi'_n(g_m-u)\langle\nabla w^n,\nabla(g_m-u)\rangle\le \tfrac{2}{\delta}\|\hat{w}^n\|_{C^{0,1,\gamma}(\overline{\cO}_m)}( \|\nabla u\|_{C^0(\overline{\cO}_m)}+\|\nabla g_m\|_{C^0(\overline{\cO}_m)}),
\end{align}
and this term can be collected into $R_n$ in \eqref{eq:bnduseproflate}.
	
Let us look at the third and fourth term on the left-hand side of \eqref{eq:bnduseproflate}. For the former we have 
\begin{align}\label{eq:nabhm}
\langle\nabla w^n,\nabla h_m\rangle\le |\nabla w^n|_d|\nabla h_m|_d\le \tfrac{1}{2}|\nabla w^n|^2_d+\tfrac{1}{2}|\nabla h_m|^2_d.
\end{align}
For the latter, recalling \eqref{eq:infgeneder} we have
\begin{align}\label{eq:wLw}
\sum_{k=1}^d\! w^n_{x_k}\cL_{x_k}w^n
&=\tfrac{1}{2}\!\sum_{i,j=1}^d\! \langle \nabla w^n,\nabla a_{ij}\rangle w^n_{x_i x_j}\!+\!\sum_{i=1}^d\!\langle \nabla w^n,\nabla b_i\rangle w^n_{x_i}\\
&\le \tfrac{d^2}{2} A_m |\nabla w^n|_d|D^2 w^n|_{d\times d}\!+\!d A_m |\nabla w^n|^2_d,\notag
\end{align}
where we used Cauchy-Schwarz inequality and set 
\begin{align}\label{eq:defnsupab}
A_m\coloneqq \max_{i,j}\Big(\big\|\nabla a_{ij}\big\|_{C^0(\overline B_m)}+ \big\|\nabla b_i\big\|_{C^0(\overline{B}_m)}\Big).
\end{align}
Using that $d^2 A_m|\nabla w^n|_d|D^2 w^n|_{d\times d}\le \theta^{-1} d^4A^2_m|\nabla w^n|^2_d+\theta|D^2 w^n|^2_{d\times d}$, with $\theta=\theta_{B_m}$ as in \eqref{eq:defnthetaEC}, and combining \eqref{eq:nabhm}, \eqref{eq:wLw} and \eqref{eq:wnrewritu}, we have
\begin{align}\label{eq:bndgrad5a1}
&\langle\nabla w^n,\nabla h_m\rangle+\sum_{k=1}^d w^n_{x_k}\cL_{x_k}w^n
\le\tfrac{1}{2} C_1 |\nabla w^n|^2_d + \tfrac{1}{2}\theta| D^2 w^n |^2_{d\times d}+\tfrac{1}{2} C_2\\
&\leq  \tfrac{1}{2}\big[C_1 |\nabla u|^2_d \!+\! \theta| D^2 w^n |^2_{d\times d}\!+\!C_2\!+\!C_1\|\nabla \hat{w}^n \|_{C^0(\overline{\cO}_m)}\big(2\|\nabla u \|_{C^0(\overline{\cO}_m)}\!+\!\|\nabla \hat{w}^n \|_{C^0(\overline{\cO}_m)}\big)\big],\notag
\end{align}
with
\begin{align}\label{eq:defnC2.1}
C_1=C_1(m)\coloneqq 1+d^4A^2_m\theta^{-1}+2dA_m\quad\text{ and }\quad C_2=C_2(m)\coloneqq  \|\nabla h_m\|^2_{C^0(\overline{\cO}_m)}.
\end{align}
The expression involving $\|\nabla \hat w^n\|_{C^0}$ can be collected into $R_n$ in \eqref{eq:bnduseproflate}.

It remains to find an upper bound for $\lambda\left(ru +\psi_\varepsilon(\zeta_n)-\frac{1}{\delta}(g_m-u)^+-h_m\right)$. Since $h_m\ge 0$ and $(g_m-u)^+\ge 0$ and taking $M_1= M_1(m)>0$ as in Remark \ref{rem:locbndu} we have
\begin{align}\label{eq:grd01}
\lambda\big(ru +\psi_\varepsilon(\zeta_n)-\tfrac{1}{\delta}(g_m-u)^+-h_m\big)\le \lambda \big(r M_1+\psi_\varepsilon(\zeta_n)\big).
\end{align}
By convexity of $\psi_\varepsilon$ and since $\psi_\varepsilon(0)=0$, we have
\begin{align}\label{eq:psicnvxzeta}
\psi_\varepsilon(\zeta_n)\leq&\,\psi_\varepsilon'(\zeta_n)(|\nabla u|^2_d-f_m^2)\leq \psi_\varepsilon'(\zeta_n)|\nabla u|^2_d
\leq\psi_\varepsilon'(\bar\zeta_n)|\nabla u|^2_d+\big|\psi_\varepsilon'(\zeta_n)-\psi_\varepsilon'(\bar\zeta_n)\big||\nabla u|^2_d\\
\leq&\,\psi_\varepsilon'(\bar\zeta_n)|\nabla u|^2_d+\big\|\psi_\varepsilon'(\zeta_n)-\psi_\varepsilon'(\bar\zeta_n)\big\|_{C^0(\overline{\cO}_m)}\|\nabla u\|^2_{C^0(\overline{\cO}_m)}.
\end{align}
Combining \eqref{eq:grd00}, \eqref{eq:bndgrad5a1}, \eqref{eq:grd01} and \eqref{eq:psicnvxzeta} we obtain \eqref{eq:bnduseproflate}.
\end{proof}

\begin{proof}[{\bf Proof of} \eqref{eq:claimtoprove}]
By Cauchy-Schwarz inequality and recalling that $\hat{w}^n=u-w^n$ we have
\begin{align}\label{eq:claimtoprove1}
&-2\langle \nabla w^n,\nabla(|\nabla u^n|^2_d- f_m^2)\rangle\\
&\leq-2\langle \nabla w^n,\nabla|\nabla u^n|^2_d\rangle+2|\nabla w^n|_d|\nabla f_m^2|_d\notag\\
&\leq-2\langle \nabla w^n,\nabla|\nabla u^n|^2_d\rangle+2|\nabla u|_d|\nabla f_m^2|_d+2\|\nabla \hat{w}^n\|_{C^0(\overline{\cO}_m)}\|\nabla f^2_m \|_{C^0(\overline{\cO}_m)}.
\end{align}
The first term on the right-hand side of \eqref{eq:claimtoprove1} can be written as 
\begin{align}
\left\langle\nabla w^n, \nabla |\nabla u^n|^2_d\right\rangle=&\left\langle\nabla u, \nabla |\nabla u^n|^2_d\right\rangle-\left\langle\nabla \hat w^n, \nabla |\nabla u^n|^2_d\right\rangle.
\end{align}
The $k$-th entry of the vector $\nabla|\nabla u^n|^2_d$ reads $\big(|\nabla u^n|^2_d\big)_{x_k}=2\langle\nabla u^n,\nabla u^n_{x_k}\rangle$ and therefore
\begin{align*}
\big|\nabla|\nabla u^n|^2_d\big|^2_d\le&\, 4\sum_{k=1}^d\big|\langle\nabla u^n,\nabla u^n_{x_k}\rangle\big|^2
\le 4\sum_{k=1}^d|\nabla u^n|^2_d\,|\nabla u^n_{x_k}|^2_d\\
\le&\,4|\nabla u^n|^2_d\,|D^2 u^n|^2_{d\times d}\le 2\Big(\|\nabla u^n\|^4_{C^0(\overline \cO_m)}+\|D^2 u^n\|^4_{C^0(\overline \cO_m)}\Big).
\end{align*}
Thus, using $\sqrt{a^2+b^2}\le |a|+|b|$, we have
\begin{align}\label{eq:grd000}
\left\langle\nabla w^n, \nabla |\nabla u^n|^2_d\right\rangle\ge&\, \left\langle\nabla u, \nabla |\nabla u^n|^2_d\right\rangle\\
&-\sqrt{2}\,\|\nabla \hat w^n\|_{C^0(\overline\cO_m)}\Big(\|\nabla u^n\|^2_{C^0(\overline \cO_m)}+\|D^2 u^n\|^2_{C^0(\overline \cO_m)}\Big).
\end{align}
For the term $\left\langle\nabla u, \nabla |\nabla u^n|^2_d\right\rangle$ we argue as follows:
\begin{align}\label{eq:2ndparticlaim}
u_{x_k}\partial_{x_k} |\nabla u^n|^2_d=&\,2u_{x_k}\langle\nabla u^n,\nabla u_{x_k}^n\rangle\notag\\
=&\,2u_{x_k}\langle\nabla u^n-\nabla u,\nabla u_{x_k}^n\rangle+2u_{x_k}\langle\nabla u,\nabla u_{x_k}^n-\nabla u_{x_k}\rangle+2u_{x_k}\langle\nabla u,\nabla u_{x_k}\rangle\\
\geq&\, -2\|u^n-u \|_{C^{1,2,\gamma}(\overline{\cO}_m)}\|\nabla u \|_{C^0(\overline{\cO}_m)}\left(\|D^2u^n \|_{C^0(\overline{\cO}_m)}+\|\nabla u \|_{C^0(\overline{\cO}_m)} \right)\notag\\
&\,+2u_{x_k}\langle\nabla u,\nabla u_{x_k}\rangle.\notag
\end{align}
For the last term above, recall that all expressions are evaluated at $(t^\lambda_n,x^\lambda_n)$, which is a stationary point for $v^{\lambda,n}$ in the spatial coordinates. Thus, $2\langle\nabla w^n,\nabla w^n_{x_i}\rangle=\lambda u_{x_i}$ by \eqref{eq:bndgrd+var} and we get
\begin{align}\label{eq:3rdparticlaim}
2u_{x_k}\langle\nabla u,\nabla u_{x_k}\rangle=&\,2u_{x_k}\langle\nabla (w^n+\hat{w}^n),\nabla (w^n_{x_k}+\hat{w}^n_{x_k})\rangle\notag\\
=&\,2u_{x_k}\langle\nabla w^n,\nabla w^n_{x_k}\rangle+2u_{x_k}\langle \nabla w^n,\nabla \hat{w}^n_{x_k}\rangle+2u_{x_k}\langle \nabla \hat{w}^n,\nabla (w^n_{x_k}+\hat{w}^n_{x_k})\rangle\\
\geq&\,\lambda u_{x_k}^2-2\|\hat{w}^n \|_{C^{1,2,\gamma}(\overline{\cO}_m)}\|\nabla u \|_{C^0(\overline{\cO}_m)}\left(\|\nabla w^n \|_{C^0(\overline{\cO}_m)}+\|D^2u \|_{C^0(\overline{\cO}_m)}\right).\notag
\end{align}
Summing over all $k$'s in \eqref{eq:2ndparticlaim} we have 
\begin{align}\label{eq:grd001}
\langle \nabla u,\nabla|\nabla u^n|^2_d\rangle\ge& \lambda|\nabla u|^2_d-2 d\|\hat{w}^n \|_{C^{1,2,\gamma}(\overline{\cO}_m)}\|\nabla u \|_{C^0(\overline{\cO}_m)}\left(\|\nabla w^n \|_{C^0(\overline{\cO}_m)}+\|D^2 u \|_{C^0(\overline{\cO}_m)}\right)\\
&-2d\|u^n-u \|_{C^{1,2,\gamma}(\overline{\cO}_m)}\|\nabla u \|_{C^0(\overline{\cO}_m)}\left(\|D^2u^n \|_{C^0(\overline{\cO}_m)}+\|\nabla u \|_{C^0(\overline{\cO}_m)} \right)\notag.
\end{align}
Plugging this expression back into \eqref{eq:grd000} we get 
\begin{align}\label{eq:grd003}
&\langle \nabla w^n, \nabla|\nabla u^n|^2_d\rangle\\
&\ge \lambda|\nabla u|^2_d-2 d\|\hat{w}^n \|_{C^{1,2,\gamma}(\overline{\cO}_m)}\|\nabla u \|_{C^0(\overline{\cO}_m)}\left(\|\nabla w^n \|_{C^0(\overline{\cO}_m)}+\|D^2 u \|_{C^0(\overline{\cO}_m)}\right)\notag\\
&\quad-2d\|u^n-u \|_{C^{1,2,\gamma}(\overline{\cO}_m)}\|\nabla u \|_{C^0(\overline{\cO}_m)}\left(\|D^2u^n \|_{C^0(\overline{\cO}_m)}+\|\nabla u \|_{C^0(\overline{\cO}_m)} \right)\notag\\
&\quad-\sqrt{2}\,\|\nabla \hat w^n\|_{C^0(\overline\cO_m)}\Big(\|\nabla u^n\|^2_{C^0(\overline \cO_m)}+\|D^2 u^n\|^2_{C^0(\overline \cO_m)}\Big).\notag
\end{align}

Since $w^n\to u$ and $u^n\to u$ in $C^{1,2,\gamma}(\overline\cO_m)$ as $n\to\infty$ {for all $\gamma\in(0,\alpha)$} (Lemma \ref{lem:stability}), then all the terms in \eqref{eq:claimtoprove1} and \eqref{eq:grd003} depending on $\hat w^n$ and $u^n-u$ can be collected in a remainder $\tilde R_n=\tilde R_n(m,\eps,\delta)$ that goes to zero as $n\to\infty$. So, \eqref{eq:claimtoprove1} and \eqref{eq:grd003} give us
\begin{align*}
-2\langle\nabla w^n,\nabla (|\nabla  u^n|^2_d-f^2_m)\rangle\leq-2\lambda|\nabla u|^2_d+2|\nabla u|_d|\nabla f_m^2|_d+\tilde{R}_n,
\end{align*}
as claimed in \eqref{eq:claimtoprove}. 
\end{proof}

\subsection{Solution to the penalised problem}\label{sec:sol-pen}
In this section we prove that Problem \ref{prb:penprob} admits a unique solution. The proof is based on a fixed point argument that requires the a priori estimates derived in Sections \ref{sec:growth} and \ref{sec:grd} as well as the next bound. 

\begin{proposition}\label{prp:hldrnrme}
Let $u^{\varepsilon,\delta}_m $ be a solution of Problem \ref{prb:penprob}. For any $\beta\in(0,1)$ there is $M_4=M_4(m,\varepsilon,\delta,\beta)$ such that
\begin{align}\label{eq:uc01}
\|u^{\varepsilon,\delta}_m \|_{C^{0,1,\beta}(\overline{\cO}_m)}\leq M_4.
\end{align}
\end{proposition}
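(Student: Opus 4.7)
The plan is to recast the semilinear equation \eqref{eq:penprob} as a linear parabolic problem with a bounded right-hand side, and then combine standard parabolic $W^{1,2,p}$ estimates up to the boundary with the compact Sobolev embedding in \eqref{eq:inclW12C01}. Crucially, the uniform $L^\infty$ bounds on $u^{\eps,\delta}_m$ and $\nabla u^{\eps,\delta}_m$ already proven in Lemma \ref{lem:polygrow} and Proposition \ref{prop:locgradp} allow us to linearise \eqref{eq:penprob} with a measurable and bounded forcing term.

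\textbf{Step 1: bounding the right-hand side.} I would set
\begin{align*}
F(t,x) \coloneqq -h_m(t,x) - \tfrac{1}{\delta}\big(g_m(t,x) - u^{\eps,\delta}_m(t,x)\big)^+ + \psi_\eps\big(|\nabla u^{\eps,\delta}_m(t,x)|^2_d - f_m^2(t,x)\big),
\end{align*}
and show that $F\in L^\infty(\cO_m)$ with a norm bound $N=N(m,\eps,\delta)$. The estimate $\|u^{\eps,\delta}_m\|_{C^0(\overline\cO_m)}\le M_1(m)$ from Remark \ref{rem:locbndu} controls $(g_m-u^{\eps,\delta}_m)^+$ by a constant depending on $m$ and $\delta$, while $\|\nabla u^{\eps,\delta}_m\|_{C^0(\overline\cO_m)}\le M_3(m)$ from Proposition \ref{prop:locgradp} combined with continuity of $f_m$ on the compact set $\overline\cO_m$ confines the argument of $\psi_\eps$ to a bounded interval, on which $\psi_\eps$ is bounded by a constant depending on $m$ and $\eps$. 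The term $h_m$ is continuous on $\overline\cO_m$, hence bounded.

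\textbf{Step 2: linear parabolic $W^{1,2,p}$ estimate.} The function $u^{\eps,\delta}_m$ solves the linear problem
\begin{align*}
\partial_t u + \cL u - r u = F \text{ on } \cO_m,\qquad u = g_m \text{ on } \partial_P\cO_m,
\end{align*}
with $F\in L^p(\cO_m)$ for every $p\in(1,\infty)$. Since $\partial B_m$ is smooth, $\cL$ is uniformly elliptic on $\overline B_m$ with $C^1$ coefficients by Assumption \ref{ass:gen1}, and $g_m\in C^{1,2,\alpha}(\overline\cO_m)\subset W^{1,2,p}(\cO_m)$, the standard $W^{1,2,p}$ theory for linear parabolic equations yields
\begin{align*}
\big\|u^{\eps,\delta}_m\big\|_{W^{1,2,p}(\cO_m)}\le C_p\Big(\|F\|_{L^p(\cO_m)} + \|g_m\|_{W^{1,2,p}(\cO_m)}\Big),
\end{align*}
with $C_p$ depending on $m$, $p$ and the coefficients of $\cL$.

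\textbf{Step 3: Sobolev embedding.} Given $\beta\in(0,1)$, choose $p>(d+2)/(1-\beta)$, so that $\alpha\coloneqq 1-(d+2)/p>\beta$. The embedding \eqref{eq:inclW12C01} then gives
\begin{align*}
\big\|u^{\eps,\delta}_m\big\|_{C^{0,1,\beta}(\overline\cO_m)}\le \big\|u^{\eps,\delta}_m\big\|_{C^{0,1,\alpha}(\overline\cO_m)}\le M_4(m,\eps,\delta,\beta).
\end{align*}
The main technical point is the global (up to the parabolic boundary) $W^{1,2,p}$ estimate in Step 2; this is classical but uses smoothness of $\partial B_m$, regularity of the boundary datum $g_m$, and regularity/ellipticity of $\cL$ on $\overline B_m$, all of which are available under our standing assumptions.
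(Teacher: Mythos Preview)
Your proposal is correct and follows essentially the same route as the paper: freeze the nonlinear right-hand side into a bounded $L^p$ forcing via the a priori bounds on $u^{\eps,\delta}_m$ and $\nabla u^{\eps,\delta}_m$, apply the global $W^{1,2,p}$ estimate for the linear parabolic problem, and conclude by the Sobolev embedding \eqref{eq:inclW12C01}. The only cosmetic differences are that the paper bounds $(g_m-u)^+\le g_m$ using $u\ge 0$ and uses $\psi_\eps(x)\le x/\eps$, whereas you use the full $C^0$ bound on $u$ and boundedness of $\psi_\eps$ on compacts; both are equivalent for the purposes of this estimate.
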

\begin{proof}
Let $u=u^{\varepsilon,\delta}_m$ for simplicity. Then, $u$ can be seen as the unique solution $\varphi$ of the linear PDE 
\begin{align*}
\partial_t\varphi+\cL\varphi-r\varphi=-h_m-\tfrac{1}{\delta}(g_m-u)^++\psi_\varepsilon(|\nabla u|^2_d-f^2_m),\quad \text{on $\cO_m$}
\end{align*}
with boundary conditions $\varphi(t,x)=0$ for $x\in\partial B_m$ and $\varphi(T,x)=g_m(T,x)$ for all $x\in B_m$. The theory of strong solutions for linear parabolic PDEs (see \cite[Thm.\ 2.6.5 and Rem.\ 2.6.4]{bensoussan2011applications}) gives 
\begin{align}\label{eq:W12pnorm}
\|u \|_{W^{1,2,p}(\cO_m)}\leq C\left( \left\|h_m+\tfrac{1}{\delta}(g_m-u )^+-\psi_\varepsilon(|\nabla u |_d^2-f_m^2)\right\|_{L^p(\cO_m)}+\|g_m\|_{W^{1,2,p}(\cO_m)}\right)
\end{align}
for any $p\in(1,\infty)$, with $C$ a constant independent of $\varepsilon$ and $\delta$. Denoting $|\cO_m|$ the volume of the set $\cO_m$, thanks to Proposition \ref{prop:locgradp} and $u\ge 0$ we have
\begin{align}\label{eq:bndW12loc}
\|u\|_{W^{1,2,p}(\cO_m)} \leq C|\cO_m|^{\frac{1}{p}}\left( \left\| h_m+\tfrac{1}{\delta}g_m\right\|_{C^{0}(\overline{\cO}_m)}+\tfrac{1}{\varepsilon}M_3^2\right)+C\|g_m\|_{W^{1,2,p}(\cO_m)}<\infty,
\end{align}
having also used that $\psi_\eps(x)\le \tfrac1\eps x $ for all $x\ge 0$.

Since $p$ is arbitrary, then Sobolev embedding \eqref{eq:inclW12C01} guarantees that for any $\beta\in(0,1)$ there exists a constant $M_4=M_4(m,\varepsilon,\delta,\beta)$ such that \eqref{eq:uc01} holds.
\end{proof}

\begin{theorem}\label{thm:exisolpenprb}
There exists a unique solution $u^{\varepsilon,\delta}_m$ of Problem \ref{prb:penprob}.
\end{theorem}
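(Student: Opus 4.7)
Uniqueness is already contained in Corollary \ref{cor:unique-pen}, so only existence remains. The plan is to run a Schaefer (Leray--Schauder) fixed point argument on a solution operator for the linearised problem, leveraging the a priori bounds of Sections \ref{sec:growth}--\ref{sec:grd} to bound the continuation family.

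First I homogenise the boundary datum by setting $\varphi = u - g_m$, so that $\varphi$ vanishes on $\partial_P \cO_m$ and solves a PDE whose source term absorbs $\partial_t g_m + \cL g_m - r g_m$. Fix $p > d+2$ and let $\beta = 1-(d+2)/p \in (0,1)$. Denote by $X \subset C^{0,1,\beta}(\overline\cO_m)$ the closed subspace of functions vanishing on $\partial_P\cO_m$. For $\zeta \in X$, linear parabolic theory \cite[Thm.\ 2.6.5]{bensoussan2011applications} yields a unique strong solution $T\zeta \in W^{1,2,p}(\cO_m)$ of
\begin{align*}
\partial_t(T\zeta) + \cL(T\zeta) - r\,T\zeta = -\tilde h_m - \tfrac{1}{\delta}(-\zeta)^+ + \psi_\varepsilon\big(|\nabla\zeta + \nabla g_m|_d^2 - f_m^2\big)\quad\text{on }\cO_m,
\end{align*}
with $T\zeta = 0$ on $\partial_P\cO_m$ and $\tilde h_m := h_m + \partial_t g_m + \cL g_m - r g_m$. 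Since the right-hand side lies in $L^\infty(\cO_m)$ and depends continuously on $\zeta$ in the $C^{0,1}$-topology, the compact Sobolev embedding \eqref{eq:inclW12C01} shows that $T:X \to X$ is continuous and compact.

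To apply Schaefer's theorem I must bound the set $S = \{\zeta \in X : \zeta = \lambda T\zeta \text{ for some } \lambda \in [0,1]\}$ uniformly in $\lambda$. For such a $\zeta$, the function $u_\lambda := \zeta + g_m$ satisfies $u_\lambda = g_m$ on $\partial_P\cO_m$ and
\begin{align*}
\partial_t u_\lambda + \cL u_\lambda - r u_\lambda = (1-\lambda)(\partial_t g_m + \cL g_m - r g_m) + \lambda\Big[-h_m - \tfrac{1}{\delta}(g_m - u_\lambda)^+ + \psi_\varepsilon(|\nabla u_\lambda|_d^2 - f_m^2)\Big].
\end{align*}
This $\lambda$-affine deformation of \eqref{eq:penprob} admits a zero-sum game representation analogous to Propositions \ref{prp:probrap1} and \ref{cor:prbrap2edm}: the obstacle and gradient penalties carry a factor $\lambda$, and a bounded additive drift $(1-\lambda)(\partial_t g_m + \cL g_m - r g_m)$ enters the running cost. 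That representation gives a quadratic-growth bound on $u_\lambda$ exactly as in Lemma \ref{lem:polygrow}, with constants independent of $\lambda,\varepsilon,\delta$. The boundary-gradient estimate of Lemma \ref{lem:fntbndr} transfers verbatim (take $(n,\nu) = (\mathbf{0},0)$), and the interior gradient bound of Proposition \ref{prop:locgradp} is reproduced by the same maximum-principle computation, because the factor $\lambda \in [0,1]$ only multiplies nonlinear terms that are absorbed by sign in the key inequalities \eqref{eq:bnduseproflate} and \eqref{eq:claimtoprove}; hence $|\nabla u_\lambda|_d \le M_3$ on $\overline\cO_m$ uniformly in $\lambda$. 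Feeding these $L^\infty$-bounds into the $W^{1,2,p}$-estimate of Proposition \ref{prp:hldrnrme} yields $\|\zeta\|_{C^{0,1,\beta}(\overline\cO_m)} \le M_4 + \|g_m\|_{C^{0,1,\beta}(\overline\cO_m)}$ uniformly in $\lambda$. Schaefer's theorem therefore furnishes a fixed point $\zeta = T\zeta \in X$, and $u := \zeta + g_m \in W^{1,2,p}(\cO_m)$ solves \eqref{eq:penprob}. A standard parabolic Schauder bootstrap now lifts $u$ to $C^{1,2,\alpha}(\overline\cO_m)$: since $\nabla u \in C^\beta$ and $(\,\cdot\,)^+$, $\psi_\varepsilon$, $g_m$, $h_m$, $f_m^2$ are sufficiently regular, the right-hand side of \eqref{eq:penprob} lies in $C^{\min(\alpha,\beta)}(\overline\cO_m)$, and iterating classical Schauder estimates for $\partial_t + \cL - r$ gives $u \in C^{1,2,\alpha}(\overline\cO_m)$.

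The main obstacle I anticipate is checking that the gradient estimate of Proposition \ref{prop:locgradp} really transfers to the Leray--Schauder continuation with a constant uniform in $\lambda \in [0,1]$. One has to re-audit the delicate computation behind \eqref{eq:bnduseproflate} and \eqref{eq:claimtoprove} for the modified PDE, tracking how the $\lambda$-factor and the additive drift $(1-\lambda)(\partial_t g_m + \cL g_m - r g_m)$ propagate through the maximum-principle argument. Fortunately $\lambda \in [0,1]$ and the drift is bounded on $\overline\cO_m$, so the inequalities only tighten; the rest of the argument (continuity and compactness of $T$, the invocation of Schaefer, and the Schauder bootstrap to $C^{1,2,\alpha}$) is routine given the machinery already developed.
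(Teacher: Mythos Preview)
Your approach is essentially the paper's: Schaefer's fixed point theorem applied to the solution operator of the linearised problem, with the a~priori estimates of Sections~\ref{sec:growth}--\ref{sec:grd} bounding the continuation family. The paper differs only in technical packaging. It does not homogenise the boundary datum but works directly in a closed subspace $C^{0,1,\alpha}_*(\overline\cO_m)$ of functions with $\varphi=0$ and $\nabla\varphi=\mathbf{0}$ on $\{T\}\times\partial B_m$; checking the compatibility condition there, it invokes classical Schauder theory \cite[Thm.\ 3.3.7]{friedman2008partial} to get $\Gamma[\varphi]\in C^{1,2,\alpha}(\overline\cO_m)$ immediately, so no $W^{1,2,p}$ bootstrap is needed. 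More importantly, because $\Gamma$ is defined without subtracting $g_m$, the continuation equation $\rho\Gamma[\varphi]=\varphi$ is \emph{exactly} Problem~\ref{prb:penprob} with data $(h_m,\psi_\varepsilon,g_m,\tfrac{1}{\delta})$ replaced by $(\rho h_m,\rho\psi_\varepsilon,\rho g_m,\tfrac{\rho}{\delta})$ and boundary $\rho g_m$---no extra drift $(1-\lambda)(\partial_t g_m+\cL g_m-rg_m)$ appears. Hence Lemmas~\ref{lem:polygrow}, \ref{lem:fntbndr} and Propositions~\ref{prop:locgradp}, \ref{prp:hldrnrme} apply verbatim with constants uniform in $\rho\in(0,1]$, and the re-audit you flag as the main obstacle simply does not arise.
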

\begin{proof}
Uniqueness is by Corollary \ref{cor:unique-pen}. Existence will be proved {refining} arguments from \cite[Prop.\ 1.2]{kelbert2019hjb}. 
Fix $\varepsilon,\delta\in(0,1)$ and $m\in\N$. Let us define a subset of $C^{0,1,\alpha}(\overline{\cO}_m)$ as
\begin{align*}
C^{0,1,\alpha}_*(\overline{\cO}_m)\coloneqq\left\{\varphi\in C^{0,1,\alpha}(\overline{\cO}_m)\left| \begin{aligned}\varphi=0 \text{ on }\{T\}\times\partial B_m \text{ and }\\
\nabla\varphi= {\bf 0} \text{ on }\{T\}\times \partial B_m\quad\;\,\end{aligned}\right\}\right..
\end{align*} 
The space $C^{0,1,\alpha}_*(\overline{\cO}_m)$ is closed under the norm $\|\cdot\|_{C^{0,1,\alpha}(\overline{\cO}_m)}$. Hence $(C^{0,1,\alpha}_*(\overline{\cO}_m),\|\cdot\|_{C^{0,1,\alpha}(\overline{\cO}_m)})$ is a Banach space. For $C^{1,2,\alpha}_*(\overline{\cO}_m)\coloneqq ( C^{1,2,\alpha}\cap C^{0,1,\alpha}_*)(\overline{\cO}_m)$, also $(C^{1,2,\alpha}_*(\overline{\cO}_m),\|\cdot\|_{C^{1,2,\alpha}(\overline{\cO}_m)})$ is a Banach space.

Given $\varphi\in C^{0,1,\alpha}_*(\overline{\cO}_m)$ we consider the linear partial differential equation for $w=w^\varphi$:
\begin{align}\label{eq:pdeoperP}
\begin{cases}
\partial_tw+\mathcal{L}w-rw=-h_m-\frac{1}{\delta}\left( g_m-\varphi\right)^++\psi_\varepsilon\left(|\nabla \varphi|_d^2-f_m^2\right), &\text{ on } \cO_m, \\
w(t,x)=g_m(t,x), &\text{$(t,x)\in\partial_P \cO_m$}.
\end{cases}
\end{align}
For $x\in\partial B_m$ the compatibility condition
\[
\lim_{s\uparrow T}(\partial_tg_m\!+\!\mathcal{L}g_m\!-\!rg_m)(s,x)\!=\!\big[\!-h_m\!-\!\tfrac{1}{\delta}\left( g_m\!-\!\varphi\right)^+\!+\!\psi_\varepsilon\left(|\nabla \varphi|_d^2\!-\!f_m^2\right)\big](T,x),
\]
holds, with both sides of the expression being equal to zero. Indeed, on the left-hand side, properties of the cut-off function $\xi_{m-1}\in C^\infty_c(B_m)$ guarantee $g_m=\partial_{x_i}g_m=\partial_{x_i,x_j}g_m=0$ on $[0,T]\times\partial B_m$, hence also $\partial_t g_m=0$. On the right-hand side of the equation, we use that $h_m=\varphi=\partial_{x_i}\varphi=0$ on $\{T\}\times\partial B_m$.
Therefore \eqref{eq:pdeoperP} admits a unique solution in $C^{1,2,\alpha}(\overline{\cO}_m)$ by \cite[Thm.\ 3.3.7]{friedman2008partial}. The boundary condition of the PDE implies $w= 0$ and $\nabla w= {\bf0}$ on $\{T\}\times \partial B_m$. Hence $w\in C^{1,2,\alpha}_*(\overline{\cO}_m)$. 

Define the operator $\Gamma:C^{0,1,\alpha}_*(\overline{\cO}_m)\to C^{0,1,\alpha}_*(\overline{\cO}_m)$ that maps $\varphi$ to the solution of the PDE \eqref{eq:pdeoperP}, i.e., $\Gamma[\varphi]=w^\varphi$. Next, we show that $\Gamma$ has a fixed point by Schaefer's fixed point theorem (stated in Appendix for completeness). So Problem \ref{prb:penprob} admits a solution. 

We have $\Gamma[\varphi]\in C^{1,2,\alpha}_*(\overline{\cO}_m)\Longrightarrow \Gamma[\varphi]\in C^{0,1,\beta}_*(\overline{\cO}_m)$ for all $\beta\in(0,1)$ by \eqref{eq:inclW12C01}. 
We must prove that $\Gamma$ is continuous and compact in $C^{0,1,\alpha}_*(\overline\cO_m)$. 
Consider a sequence $(\varphi_n)_{n\in\N}\subset C^{0,1,\alpha}_*(\overline{\cO}_m)$ such that $\varphi_n\to \varphi$ in $C^{0,1,\alpha}_*(\overline{\cO}_m)$. Let $F:\R\times\R^d\to\R$ be defined as
\begin{align*}
F(q,p)\coloneqq \psi_{\varepsilon}\left(|p|^2_d-f^2_m\right)-\tfrac{1}{\delta}(g_m-q)^+.
\end{align*}
Clearly $|\varphi|$ and $|\nabla\varphi|_d$ are bounded on $\overline \cO_m$. Since $F$ is locally Lipschitz, then standard estimates for H\"older norms allow to prove $F(\varphi_n,\nabla \varphi_n)\to F(\varphi,\nabla \varphi)$ in {$C^{\gamma}(\overline{\cO}_m)$} as $n\to\infty$ for any $\gamma\in(0,\alpha)$. By Lemma \ref{lem:approxpenprob} in Appendix, $\Gamma[\varphi_n]\to\Gamma[\varphi]$ in $C^{1,2,\gamma'}_*(\overline\cO_m)$ for any $\gamma'\in(0,\gamma)$, as $n\to\infty$. Sobolev embedding \eqref{eq:inclW12C01} implies $\Gamma[\varphi_n]\to\Gamma[\varphi]$ in $C^{0,1,\beta}_*(\overline\cO_m)$ for any $\beta\in(0,1)$, hence continuity of $\Gamma$. 

For compactness, notice that $\Gamma$ maps bounded sets of $C^{0,1,\alpha}_*(\overline{\cO}_m)$ into bounded sets of $C^{1,2,\alpha}_*(\overline{\cO}_m)$ by \cite[Thms.\ 3.2.6 and 3.3.7]{friedman2008partial}. Since bounded sets in $C^{1,2,\alpha}_*(\overline{\cO}_m)$ are bounded in $C^{0,1,\beta}_*(\overline{\cO}_m)$ for all $\beta\in(0,1)$ (see \eqref{eq:inclW12C01}), then $C^{1,2,\alpha}_*(\overline{\cO}_m)\hookrightarrow C^{0,1,\alpha}_*(\overline{\cO}_m)$ is a compact embedding. 

It remains to prove that the set
\begin{align}
\cB\coloneqq \Big\{\varphi\in C^{0,1,\alpha}_*(\overline{\cO}_m)\Big|\, \rho \Gamma[\varphi]=\varphi\text{ for some }\rho\in[0,1] \Big\}
\end{align}
is bounded in $C^{0,1,\alpha}_*(\overline{\cO}_m)$. If $\rho=0$, then $\varphi=0$. If $\rho \Gamma[\varphi]=\varphi$ for some $\rho\in(0,1]$, then $\varphi$ satisfies
\begin{align}\label{eq:penpderho}
\begin{cases}\partial_t\varphi+\mathcal{L}\varphi-r\varphi=\rho\big[-h_m-\frac{1}{\delta}\big( g_m-\varphi\big)^++\psi_\varepsilon\big(|\nabla \varphi|_d^2-f_m^2\big)\big], &\text{ on } \cO_m, \\
\varphi=\rho g_m, &\text{ on }\partial_P\cO_m.
\end{cases}
\end{align}
The PDE in \eqref{eq:penpderho} is the same as the one in \eqref{eq:penprob} but with $h_m$, $\psi_\varepsilon$, $g_m$, $\frac{1}{\delta}$ replaced by $\rho h_m$, $\rho\psi_\varepsilon$, $\rho g_m$, $\frac{\rho}{\delta}$. 
Then, all the results from this section and Proposition \ref{prp:hldrnrme} apply to $\varphi$. In particular, $\|\varphi\|_{C^{0,1,\alpha}(\overline{\cO}_m)}\leq M_4$ uniformly for all $\rho\in(0,1]$, with $M_4$ as in Proposition \ref{prp:hldrnrme}.
Finally, Schaefer's fixed point theorem guarantees existence of the solution of Problem \ref{prb:penprob}, for every triple $(\varepsilon,\delta,m)$.
\end{proof}

\section{Penalised Problem on Unbounded Domain and Further Estimates}\label{sec:penalised-b}
We refine our a priori estimates on the solution of Problem \ref{prb:penprob}. First we shall make all bounds independent of $m$ so that we can construct a solution to a penalised problem on unbounded domain as $m\to\infty$. Then we shall find bounds independent of $\varepsilon$ and $\delta$ so as to pass to the limit for $\eps,\delta\downarrow 0$ and obtain a solution to Problem \ref{prb:varineq}. 

\subsection{Estimates independent of $m$}
First we bound $\nabla u^{\eps,\delta}_m$ independently of $m$ on each compact.
\begin{proposition}\label{prop:gradbndU}
Fix $m_0\!\in\!\N$ and $q\!\geq\! m_0\!+\!2$. Let $u^{\varepsilon,\delta}_q$ be the unique solution of Problem \ref{prb:penprob} on $\cO_q$. 
Then, there is $N_1=N_1(m_0)$ independent of $\varepsilon$, $\delta$ and $q$, such that 
\begin{align}\label{eq:gradbndU}
\sup_{(t,x)\in\overline{\cO}_{m_0}}|\nabla u^{\varepsilon,\delta}_q(t,x)|_d\leq N_1.
\end{align}
\end{proposition}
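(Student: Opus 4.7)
\medskip

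\noindent\textbf{Plan.} The strategy is to mimic the proof of Proposition \ref{prop:locgradp}, but to replace the boundary gradient estimate of Lemma \ref{lem:fntbndr} (whose constants blew up with $m$) by a \emph{localisation via a spatial cutoff}, so that boundary terms are suppressed and all constants depend only on $m_0$. Write $u=u^{\varepsilon,\delta}_q$. Since $q\ge m_0+2$, we have $\xi_{q-1}\equiv 1$ on $B_{q-1}\supset \overline B_{m_0+1}$, so $g_q=g$, $h_q=h$, $f_q=f$ on $\overline{\cO}_{m_0+1}$, and there $u$ solves the penalised PDE with the \emph{original} data $g,h,f$. In particular, Remark \ref{rem:locbndu} gives a $C^0$-bound $\|u\|_{C^0(\overline{\cO}_{m_0+1})}\le K_3(1+(m_0+1)^2)=:\bar M_1(m_0)$, independent of $\varepsilon,\delta,q$.

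\medskip

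\noindent\textbf{Key steps.} Choose a smooth spatial cutoff $\phi\in C^\infty_c(B_{m_0+1})$ with $0\le\phi\le 1$, $\phi\equiv 1$ on $B_{m_0}$ and $\|\nabla\phi\|_{C^0}+\|D^2\phi\|_{C^0}\le C(m_0)$. Define the auxiliary function
\begin{align*}
V^\lambda(t,x) := \phi^2(x)\,|\nabla u(t,x)|_d^2 - \lambda\, u(t,x), \qquad (t,x)\in \overline{\cO}_{m_0+1},
\end{align*}
for a parameter $\lambda>0$ to be fixed. Since $\phi=1$ on $B_{m_0}$ and $u\ge 0$, once we bound $\sup_{\overline\cO_{m_0+1}}V^\lambda$ by some $C(m_0,\lambda)$ (independent of $\varepsilon,\delta,q$), we obtain $\sup_{\overline\cO_{m_0}}|\nabla u|_d^2\le C(m_0,\lambda)+\lambda\bar M_1(m_0)$, which gives \eqref{eq:gradbndU}. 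Let $(t^\lambda,x^\lambda)$ be a maximiser of $V^\lambda$ on the compact set $\overline\cO_{m_0+1}$. Three cases arise:
\emph{(a)} if $x^\lambda\in\partial B_{m_0+1}$ then $\phi(x^\lambda)=0$ and $V^\lambda\le 0$ trivially;
\emph{(b)} if $t^\lambda=T$ then $u(T,\cdot)=g(T,\cdot)$ on $B_{m_0+1}$, so $|\nabla u(T,x^\lambda)|_d\le\|\nabla g\|_{C^0(\overline\cO_{m_0+1})}\le \|f\|_{C^0(\overline\cO_{m_0+1})}$ by Assumption \ref{ass:gen2}(v), again a bound depending only on $m_0$;
\emph{(c)} otherwise $(t^\lambda,x^\lambda)\in\cO_{m_0+1}$ and the maximum principle applies.

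\medskip

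\noindent\textbf{Interior case.} One regularises $u$ and $(\cdot)^+$ exactly as in Lemma \ref{lem:stability} and writes $V^{\lambda,n}=\phi^2|\nabla w^n|_d^2-\lambda u$. Computing $\partial_t V^{\lambda,n}+\cL V^{\lambda,n}\le 0$ at the corresponding approximating maximiser, one runs the same algebra as in the proof of Proposition \ref{prop:locgradp}: uniform ellipticity on $B_{m_0+1}$ with constant $\theta_{B_{m_0+1}}$ produces a positive term $\theta_{B_{m_0+1}}\,\phi^2|D^2 w^n|_{d\times d}^2$, the term $2\psi'_\varepsilon(\bar\zeta_n)\,\phi^2\langle\nabla w^n,\nabla(|\nabla u^n|_d^2-f^2)\rangle$ produces, as in \eqref{eq:claimtoprove}, a coercive $-2\lambda\,\phi^2|\nabla u|_d^2$ contribution, and the Hamilton--Jacobi--Bellman coefficients $a_{ij},b_i$ and their first derivatives are bounded on $\overline B_{m_0+1}$ by constants depending only on $m_0$. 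Choosing $\lambda=\lambda(m_0)$ large enough to dominate the lower-order linear-in-$|\nabla u|_d$ and cutoff-induced terms (which is possible because $\psi'_\varepsilon(\bar\zeta_n)\ge 1$ for large $n$ under the analogue of \eqref{eq:assgradfm}, and the $\varepsilon$-dependence cancels as in the original proof) yields a bound $\phi^2(x^\lambda)|\nabla u(t^\lambda,x^\lambda)|_d^2\le N_1^2(m_0)$ that is independent of $\varepsilon,\delta,q$.

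\medskip

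\noindent\textbf{Main obstacle.} The delicate part is the bookkeeping of the \emph{cutoff-induced cross terms}. The derivatives $V^{\lambda,n}_{x_i}=2\phi\phi_{x_i}|\nabla w^n|_d^2+2\phi^2\langle\nabla w^n,\nabla w^n_{x_i}\rangle-\lambda u_{x_i}$ no longer vanish from the stationarity condition as cleanly as in \eqref{eq:bndgrd+var}; they produce extra terms proportional to $|\nabla w^n|_d^2|\nabla\phi|_d^2$ and $|\nabla w^n|_d|\nabla\phi|_d|D^2 w^n|_{d\times d}$ when substituted into $\sum a_{ij}V^{\lambda,n}_{x_ix_j}$. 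These must be absorbed using Young's inequality against the positive $\theta_{B_{m_0+1}}\phi^2|D^2 w^n|_{d\times d}^2$ term (after multiplying $V^\lambda$ by $\phi^2$ with the right power, or, equivalently, after noticing that the bad terms carry a factor $|\nabla\phi|_d^2\le C(m_0)$). Once this absorption is carried out and $\lambda$ chosen accordingly, the argument closes exactly as in Proposition \ref{prop:locgradp}, producing $N_1(m_0)$ independent of the three parameters $\varepsilon,\delta,q$.
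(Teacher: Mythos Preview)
Your approach---localise with a spatial cutoff, apply the maximum principle, and run the algebra of Proposition \ref{prop:locgradp}---is exactly the paper's strategy. Your use of $\phi^2$ in place of the paper's $\xi=\xi_{m_0}$ satisfying $|\nabla\xi|_d^2\le C_0\xi$ is an equivalent device. However, your bookkeeping of the cutoff-induced terms is incomplete, and the resulting gap is not cosmetic.

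You correctly identify the extra terms coming from $\sum a_{ij}V^{\lambda,n}_{x_ix_j}$, namely $(\mathcal L\phi^2)|\nabla w^n|^2_d$ and a cross term $\sim\phi|\nabla\phi|_d|\nabla w^n|_d|D^2w^n|_{d\times d}$, and these \emph{can} be absorbed by Young's inequality as you say. What you miss is that the analogue of \eqref{eq:claimtoprove} now uses the \emph{modified} stationarity condition $2\phi^2\langle\nabla w^n,\nabla w^n_{x_k}\rangle=\lambda u_{x_k}-(\phi^2)_{x_k}|\nabla w^n|^2_d$, and this produces an additional \emph{cubic} term $\sim |\nabla(\phi^2)|_d\,|\nabla u|_d^3\approx 2\phi|\nabla\phi|_d|\nabla u|_d^3$ in the lower bound for $\phi^2\langle\nabla w^n,\nabla(|\nabla u^n|_d^2-f^2)\rangle$ (compare the paper's \eqref{eq:b002}, where the term $-|\nabla u|_d^3|\nabla\xi|_d$ appears). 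A fixed $\lambda=\lambda(m_0)$ cannot dominate a cubic term by a quadratic one; your claim that the remaining terms are ``linear-in-$|\nabla u|_d$'' is therefore wrong at precisely the crucial point. The paper closes this gap by choosing $\bar\lambda$ \emph{self-referentially}, i.e.\ $\bar\lambda=\text{const}+2\sqrt{C_0}\,\|\sqrt{\xi}\,|\nabla u|_d\|_0$, which forces $(\bar\lambda-1-2|\nabla\xi|_d|\nabla u|_d-C_5)\ge 1$ at the maximum and yields $|\nabla u(t^{\bar\lambda}_n,x^{\bar\lambda}_n)|_d^2\le c(1+\bar\lambda)$. Feeding this back through $\|\xi|\nabla u|_d^2\|_0\le |\nabla u(t^{\bar\lambda}_n,x^{\bar\lambda}_n)|_d^2+\bar\lambda M_1+\eta$ gives a quadratic-versus-linear inequality for $\|\sqrt{\xi}\,|\nabla u|_d\|_0$, which bounds it by a constant depending only on $m_0$ and hence bounds $\bar\lambda$ a posteriori. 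Your sketch needs this bootstrap; without it the argument does not close.
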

\begin{proof}
For notational simplicity set $u=u^{\varepsilon,\delta}_q$, $\xi=\xi_{m_0}$ and $\|\cdot\|_{0}=\|\cdot\|_{C^0(\overline{\cO}_{m_0+1})}$.
Since $q\geq m_0+2$, we have $f_q=f$, $g_q=g$ and $h_q=h$ on $\overline{\cO}_{m_0+1}$. {Let $\lambda_{0}\in(0,\infty)$ be a constant depending on $m_0$ but independent of $\eps,\delta, q$, which will be chosen later}.
Let $v^\lambda\in C^{0,1,\alpha}(\overline\cO_{m_0+1})$ be defined as
\begin{align}\label{eq:vlambda}
v^\lambda(t,x)\coloneqq \xi(x)\left|\nabla u(t,x)\right|^2_d-\lambda u(t,x),\quad\text{ for }(t,x)\in\overline{\cO}_{m_0+1}
\end{align}
{for some $\lambda\in(0,\lambda_0]$}. We will use later that
\begin{align}\label{eq:vlgrdu}
\sup_{(t,x)\in\overline{\cO}_{m_0}}|\nabla u(t,x)|^2_d\leq \sup_{(t,x)\in\overline{\cO}_{m_0+1}}\xi(x)|\nabla u(t,x)|^2_d\le \sup_{(t,x)\in\overline{\cO}_{m_0+1}}v^\lambda(t,x)+{\lambda_0}\|u\|_0, 
\end{align}
where we also notice that $\|u\|_0\le M_1$ with $M_1= M_1(m_0\!+\!1)$ as in Remark \ref{rem:locbndu}.

Let $(t^\lambda,x^\lambda)\in\overline\cO_{m_0+1}$ be a maximum point for $v^\lambda$. If $x^\lambda\in\partial B_{m_0+1}$ then $\xi(x^\lambda)=0$ and $v(t^\lambda,x^\lambda)=-\lambda u(t^\lambda,x^\lambda)\leq 0$. 
If $t^\lambda=T$, then $v^\lambda(T,x^\lambda)\leq \xi(x^\lambda)|\nabla g(T,x^\lambda)|^2_d\le \|f\|^2_{0}$ (see Assumption \ref{ass:gen2}). Thus, in both cases \eqref{eq:gradbndU} holds with $N_1\ge (\|f\|^2_0+\lambda_0 M_1)^{1/2}$. 
Defining $\Lambda_{m_0+1}$ as in \eqref{eq:Lambdam} but with $\lambda_0$ instead of $\lambda_m$, if $\Lambda_{m_0+1}\neq \varnothing$ the bound holds taking $\lambda\in\Lambda_{m_0+1}$. It remains to consider the case $\Lambda_{m_0+1}=\varnothing$, so that $(t^\lambda,x^\lambda)\in\cO_{m_0+1}$ for all $\lambda\in{(0,\lambda_0]}$.

As in the proof of Proposition \ref{prop:locgradp} we use the smooth approximation $w^n$ of $u$, obtained from \eqref{eq:PDEsmth2}. Analogously, let us define $v^{\lambda,n}\coloneqq \xi\left|\nabla w^n\right|^2_d-\lambda u$ in $\overline{\cO}_{m_0+1}$ and
let $(t^\lambda_n,x^\lambda_n)_{n\in\N}$ be a sequence converging to $(t^\lambda,x^\lambda)$ with $(t^\lambda_n,x^\lambda_n)\in\argmax_{\overline{\cO}_{m_0+1}}v^{\lambda,n}$. For any $\eta>0$ there exists a neighbourhood $U_{\lambda,\eta}\cup\big(\{0\}\times V_{\lambda,\eta}\big)$ of $(t^\lambda,x^\lambda)$ such that 
\begin{align}\label{eq:vlambda2}
v^\lambda(t,x)>v^\lambda(t^\lambda,x^\lambda)-\eta,\qquad \text{for all $(t,x)\in U_{\lambda,\eta}\cup\big(\{0\}\times V_{\lambda,\eta}\big)$}
\end{align} 
and $(t^\lambda_n,x^\lambda_n)\in U_{\lambda,\eta}\cup\big(\{0\}\times V_{\lambda,\eta}\big)$ for sufficiently large $n$'s (with the convention $V_{\lambda,\eta}=\varnothing$ if $t^\lambda\neq 0$).

Taking derivatives of $v^{\lambda,n}$ we have, for  $1\leq i,j\leq d$,
\begin{align*}
v_t^{\lambda,n}=&\,2\xi\langle \nabla w^n,\nabla w^n_t\rangle-\lambda u_t;\notag\\
v_{x_i}^{\lambda,n}=&\,2\xi\langle \nabla w^n,\nabla w^n_{x_i}\rangle-\lambda u_{x_i}+\xi_{x_i}|\nabla w^n|^2_d;\\ 
v^{\lambda,n}_{x_ix_j}=&\,2\xi\langle \nabla w^n_{x_i},\nabla w^n_{x_j}\rangle+2\xi\langle \nabla w^n,\nabla w^n_{x_ix_j}\rangle\\
&\,-\lambda u_{x_ix_j}+\xi_{x_i x_j}|\nabla w^n|^2_d+2\left(\xi_{x_i}\langle \nabla w^n,\nabla w^n_{x_j}\rangle+\xi_{x_j} \langle \nabla w^n,\nabla w^n_{x_i}\rangle\right).\notag
\end{align*}
Since $(t^\lambda_n,x^\lambda_n)\in\cO_{m_0+1}$ then \eqref{eq:MaxP} gives $(\partial_tv^{\lambda,n}+\cL v^{\lambda,n})(t^\lambda_n,x^\lambda_n)\le 0$.
Substituting the expressions for the derivatives of $v^{\lambda,n}$, some tedious but straightforward calculations and symmetry of $a_{ij}$ give 
\begin{align}\label{eq:2firstbndgrad}
0\geq&\,2\xi \langle\nabla w^n,(\partial_t\! +\!\mathcal{L})(\nabla w^n)\rangle\!-\!\lambda(\partial_t u\! +\! \mathcal{L}u)\!+\!\xi\sum_{i,j=1}^d a_{ij}\langle\nabla w^n_{x_i},\nabla w^n_{x_j}\rangle\\
&+\!(\mathcal{L}\xi)|\nabla w^n|^2_d\!+\!2\sum_{i,j=1}^da_{ij}\xi_{x_i}\langle \nabla w^n,\nabla w^n_{x_j}\rangle,\notag
\end{align}
where $(\partial_t +\mathcal{L})(\nabla w^n)$ denotes the vector with entries $(\partial_t +\mathcal{L}) w^n_{x_k}$ for $k=1,\ldots,d$. Here we omit the dependence on $(t^\lambda_n,x^\lambda_n)$ for notational convenience.

The expressions for $(\partial_t+\cL)w^n_{x_k}$ and $(\partial_t+\cL)u$ are the same as in \eqref{eq:bndgrad3} and \eqref{eq:penprob}, respectively. A lower bound for $\sum a_{ij}\langle \nabla w^n_{x_i},\nabla w^n_{x_j}\rangle$ was also obtained in \eqref{eq:bndgrad2} but with $\theta$ therein replaced by $\theta=\theta_{B_{m_0+1}}$. Thus, from \eqref{eq:2firstbndgrad} we get 
\begin{align}\label{eq:1stabndunif}
0\geq&\,\xi\theta|D^2w^n|^2_{d\times d}\\
&+\!2\xi\Big[r|\nabla w^n|^2\!+\!\psi'_\eps(\bar\zeta_n)\langle \nabla w^n,\nabla (|\nabla u^n|^2_d\!-\!f^2)\rangle\!-\!\tfrac{1}{\delta}\chi'_n(g\!-\!u)\langle \nabla w^n,\nabla(g\!-\!u)\rangle\\
&\qquad\quad -\langle\nabla w^n, \nabla h\rangle-\sum_{k=1}^d w^n_{x_k}\cL_{x_k}w^n\Big]-\lambda\big(ru+\psi_\varepsilon(\zeta_n)-h-\tfrac{1}{\delta}(g-u)^+\big)\\ 
&\,+(\mathcal{L}\xi)|\nabla w^n|^2_d+2\sum_{i,j=1}^da_{ij}\xi_{x_i}\langle \nabla w^n,\nabla w^n_{x_j}\rangle\notag,
\end{align}
where $\bar\zeta_n= (|\nabla u^n|_d^2-f^2)(t^\lambda_n,x^\lambda_n)-\tfrac{1}{n}$ and $\zeta_n= (|\nabla u|_d^2-f^2)(t^\lambda_n,x^\lambda_n)$.
Up to a factor $\xi$, the expression above is the analogue of \eqref{eq:bndgrad5} but with two additional terms. As in \eqref{eq:bnduseproflate} we have
\begin{align}\label{eq:b000}
&2\xi\Big[r|\nabla w^n|^2_d-\tfrac{1}{\delta}\chi_n'(g-u)\langle \nabla w^n,\nabla(g-u)\rangle-\langle\nabla w^n,\nabla h\rangle -\sum_{k=1}^d w^n_{x_k}\mathcal{L}_{x_k} w^n\Big]\\
&-\lambda\big(r u +\psi_\varepsilon(\zeta_n)-\tfrac{1}{\delta}(g-u)^+-h\big)\\
&\geq-C_1\xi |\nabla u|^2_d - \tfrac{\theta}{2}\xi| D^2 w^n |^2_{d\times d}- \xi C_2-\lambda rM_1-\lambda\psi_\varepsilon'(\bar\zeta_n)|\nabla u|^2_d-R_n,
\end{align}
where $R_n\to 0$ as $n\to\infty$ uniformly on $\overline{\cO}_{m_0+1}$. We notice that, differently to \eqref{eq:bnduseproflate}, we have a factor $1/2$ multiplying $| D^2 w^n |^2_{d\times d}$. That of course is obtained by adjusting the constant $C_1= (1+2d^4A^2_{m_0+1}\theta^{-1}+2dA_{m_0+1})$ (see \eqref{eq:defnC2.1}) with $A_{m_0+1}$ as in \eqref{eq:defnsupab} and $C_2=\|\nabla h\|_{0}$. 

The last term on the right-hand side of \eqref{eq:1stabndunif} can be easily bounded. Set $\bar a_0\coloneqq \max_{i,j}\|a_{ij}\|_0$, which is finite by continuity of $a_{ij}$, and recall that $|\nabla\xi|_d^2\leq C_0\xi$ by \eqref{rem:cutoffXI}. Then 
\begin{align}\label{eq:mixbndxiD2}
&2\sum_{i,j=1}^d\!a_{ij}\xi_{x_i}\langle \nabla w^n,\nabla w^n_{x_j}\rangle
\geq-\!2\bar a_0 d\!\sum_{j=1}^d\!|\nabla \xi|_d|\nabla w^n|_d|\nabla w^n_{x_j}|_d\\
&\qquad\geq\,-2\bar a_0 d^2 \sqrt{C_0 \xi} |\nabla w^n|_d|D^2 w^n|_{d\times d}\geq\,- C_3|\nabla w^n|^2_d - \xi\frac{\theta}{2}| D^2 w^n |^2_{d\times d}\notag,
\end{align}
where the final inequality is by $|ab|\leq p a^2+\frac{b^2}{p}$ with $a=2\bar a_0 d^2\sqrt{C_0}|\nabla w^n|_d$, $b=\sqrt{\xi}| D^2 w^n |_{d\times d}$ and $p=2/\theta$, and setting $C_3= 8\theta^{-1}\bar a_0^2d^4C_0$. 
It is also easy to check that $\|\cL\xi\|_0\le \kappa$ for some $\kappa=\kappa(\|b\|_0,\|\sigma\|_0)>0$, because the derivatives of $\xi$ are bounded independently of $m_0$.  

Plugging \eqref{eq:b000} and \eqref{eq:mixbndxiD2} into \eqref{eq:1stabndunif} and setting $C_4=C_3+\kappa$ we have
\begin{align}\label{eq:b001}
0\ge &2\xi\psi'_\eps(\bar \zeta_n)\langle\nabla w^n,\nabla\big(|\nabla u^n|^2_d-f^2\big)\rangle-\xi C_1|\nabla u|^2_d-C_4|\nabla w^n|^2_d\\
&-\lambda\psi'_\eps(\bar \zeta_n)|\nabla u|^2_d-\xi C_2-\lambda r M_1-R_n.\notag
\end{align}
Using \eqref{eq:wnrewritu} we have $\xi C_1|\nabla u|^2_d+C_4|\nabla w^n|^2_d\leq C_5 |\nabla u|^2_d+\tilde{R}_n$, where $\tilde R_n\to 0$ as $n\to\infty$, uniformly on $\overline{\cO}_{m_0+1}$ and $C_5=C_1+C_4$. 
It remains to find a lower bound for $\xi\langle\nabla w^n,\nabla\big(|\nabla u^n|^2_d-f^2\big)\rangle$. By the same arguments as in \eqref{eq:claimtoprove1}, the bounds in \eqref{eq:grd000} and \eqref{eq:2ndparticlaim} continue to hold, up to the inclusion of the multiplicative factor $\xi$. We have an additional term in the final expression in \eqref{eq:3rdparticlaim}, because now $v_{x_k}^{\lambda,n}=0$ gives $2\xi\langle \nabla w^n,\nabla w^n_{x_k}\rangle=\lambda u_{x_k}-\xi_{x_k}|\nabla w^n|^2_d$. So, the extra term appearing in \eqref{eq:3rdparticlaim} reads $-u_{x_k}\xi_{x_k}|\nabla w^n|^2_d$ and, similarly to \eqref{eq:wnrewritu}, we get
\[
-u_{x_k}\xi_{x_k}|\nabla w^n|^2_d\geq -u_{x_k}\xi_{x_k}|\nabla u|^2_d-|u_{x_k}||\xi_{x_k}|\|\nabla \hat{w}^n \|_{0}\left(2\|\nabla u \|_{0}+\|\nabla \hat{w}^n \|_{0}\right).
\]
In summary, we have
\begin{align}\label{eq:b002}
\xi\langle\nabla w^n,\nabla\big(|\nabla u^n|^2_d-f^2\big)\rangle\ge \lambda|\nabla u|^2_d-\xi|\nabla u|_d|\nabla f^2|_d-|\nabla u|^3_d|\nabla \xi|_d-\xi \hat R_n,
\end{align}
where $\hat R_n\to 0$ as $n\to\infty$, uniformly on $\overline{\cO}_{m_0+1}$.

Substituting \eqref{eq:b002} into \eqref{eq:b001} and grouping together all terms that vanish as $n\to\infty$ we obtain
\begin{align*}
0\ge 2\psi'_\eps(\bar\zeta_n)\big[\tfrac{\lambda}{2}|\nabla u|^2_d-\xi|\nabla u|_d|\nabla f^2|_d-|\nabla u|^3_d|\nabla \xi|_d\big]-C_5|\nabla u|^2_d-C_2-\lambda r M_1 -\bar R_n,
\end{align*}
where $\bar R_n\to0$ as $n\to\infty$. Using that $\psi'_\eps(\bar \zeta_n)\ge 1$ by the analogue of \eqref{eq:assgradfm}, for sufficiently large $n$, and multiplying the above expression by $-1$ we arrive at
\begin{align*}
0\le \psi'_\eps(\bar \zeta_n)\big[-\lambda|\nabla u|^2_d+2\xi|\nabla u|_d|\nabla f^2|_d+2|\nabla u|^3_d|\nabla \xi|_d+C_5|\nabla u|^2_d+C_2+\lambda r M_1 +\bar R_n\big].
\end{align*}
Using $2\xi|\nabla u|_d|\nabla f^2|_d\le |\nabla u|^2_d+|\nabla f^2|^2_d $ and $|\nabla f^2|_d\le \|\nabla f^2\|_{0} $, the above inequality leads to 
\[
\big(\lambda-1-2|\nabla \xi|_d|\nabla u|_d-C_5\big)|\nabla u|^2_d\le \|\nabla f^2\|_{0}^2+C_2+\lambda r M_1+\bar R_n. 
\]
Then, recalling that $|\nabla \xi|^2_d\le C_0\xi $ (see \eqref{rem:cutoffXI}) and setting $\bar \lambda=2+2\sqrt{C_0}\big\|\sqrt{\xi}|\nabla u|_d\big\|_{0}+C_5$, we obtain
\begin{align}\label{eq:b003}
|\nabla u|^2_d(t^{\bar \lambda}_n,x^{\bar \lambda}_n)\le \|\nabla f\|_{0}^2+C_2+\bar \lambda r M_1+\bar R_n
\end{align}

The parameter $\bar\lambda$ is bounded from above independently of $\nabla u$ as follows. Let $c>0$ be a constant that varies from one expression to the next, independent of $\bar \lambda$, $\eps$, $\delta$, but depending on $d$ and the $C^0(\overline\cO_{m_0+1})$-norms of $b$, $\sigma$, $g$, $h$, $f^2$, and their spatial gradient. From \eqref{eq:vlambda}, \eqref{eq:vlgrdu} and \eqref{eq:vlambda2}, we obtain
\begin{align*}
\big\|\xi|\nabla u|^2_d\big\|_0\le |\nabla u(t^{\bar \lambda}_n,x^{\bar \lambda}_n)|^2_d\!+\!\bar \lambda M_1\!+\!\eta\le c(1\!+\!\bar \lambda\! +\!\bar R_n)\!+\!\eta\le c(1\!+\!\big\|\sqrt{\xi}|\nabla u|_d\big\|_{0} \!+\!\bar R_n)\!+\!\eta, 
\end{align*}
where the second inequality uses \eqref{eq:b003} and the final one the definition of $\bar \lambda$.
Since $\|\xi|\nabla u|^2_d\|_0=\|\sqrt{\xi}|\nabla u|_d\|_0^2$, then $\|\sqrt{\xi}|\nabla u|_d\|_0\!\le\! \max\{1,c(1\!+\!\bar R_n)\}\!+\!\eta$.
As $n\uparrow\infty$ and $\eta\downarrow 0$ we get \eqref{eq:gradbndU} from \eqref{eq:vlgrdu} and \eqref{eq:b003}, choosing $\lambda_0=2+2(1+ c) \sqrt{C_0}+C_5$, thanks to the bound on $\bar \lambda$.
\end{proof}

The bound in Proposition \ref{prp:hldrnrme} depends on $m$. The next result instead provides a uniform bound on any compact $\overline\cO_{m_0}$ for $m_0<m$. This can be achieved thanks to \eqref{eq:gradbndU}.
\begin{proposition}\label{prp:W12pboundtemp}
Fix $m_0\in\N$ and $q\geq m_0+3$ and let $u^{\varepsilon,\delta}_q $ be the unique solution of Problem \ref{prb:penprob} on $\cO_q$. For any $p\in(d+2,\infty)$ and $\beta=1-(d+2)/p$ there is $N_2=N_2(m_0,\varepsilon,\delta,p)$ such that
\begin{align}\label{eq:C01bound}
\|u^{\varepsilon,\delta}_q\|_{W^{1,2,p}(\cO_{m_0})}+\|u^{\varepsilon,\delta}_q \|_{C^{0,1,\beta}(\overline{\cO}_{m_0})}\leq N_2.
\end{align} 
\end{proposition}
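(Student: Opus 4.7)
Write $u = u^{\eps,\delta}_q$. The plan is to exploit the uniform bounds on $u$ and $\nabla u$ from Remark~\ref{rem:locbndu} and Proposition~\ref{prop:gradbndU} to obtain an $L^\infty$-estimate on the forcing term of the penalised PDE on an intermediate domain $\cO_{m_0+1}$, and then invoke a global $W^{1,2,p}$-estimate for a linear parabolic equation with homogeneous data after localising via a spatial cutoff.

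Because $q \ge m_0+3$, the definitions of $f_q, g_q, h_q$ give $f_q = f$, $g_q = g$, $h_q = h$ on $\overline{\cO}_{m_0+2}$, hence $u$ satisfies the linear equation
\[
\partial_t u + \cL u - r u = F \quad \text{on } \cO_{m_0+2}, \qquad F := -h - \tfrac{1}{\delta}(g-u)^+ + \psi_\eps(|\nabla u|_d^2 - f^2).
\]
Applying Remark~\ref{rem:locbndu} to $m_0+1$ and Proposition~\ref{prop:gradbndU} with $m_0$ replaced by $m_0+1$ (permissible since $q \ge (m_0+1)+2$), we obtain
\[
\|u\|_{C^0(\overline{\cO}_{m_0+1})} \le M_1(m_0+1), \qquad \|\nabla u\|_{C^0(\overline{\cO}_{m_0+1})} \le N_1(m_0+1),
\]
both independent of $\eps,\delta,q$. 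Since $0 \le \psi_\eps(y) \le y^+/\eps$ for $y\in\R$, and $f,g,h$ are locally bounded, this yields $\|F\|_{L^\infty(\cO_{m_0+1})} \le C_0 = C_0(m_0,\eps,\delta)$ uniformly in $q$.

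Now set $w := u-g$; then $w(T,\cdot) \equiv 0$ on $B_{m_0+2}$ (because $u(T,\cdot)=g_q(T,\cdot)=g(T,\cdot)$ there), and $w$ solves
\[
\partial_t w + \cL w - r w = \tilde F := F - (\partial_t g + \cL g - r g) \quad \text{on } \cO_{m_0+2},
\]
with $\|\tilde F\|_{L^\infty(\cO_{m_0+1})}$ bounded by $C_0$ and the local norm of $g$ under Assumption~\ref{ass:gen2}(i). Pick $\eta \in C^\infty_c(\R^d)$ with $0 \le \eta \le 1$, $\eta \equiv 1$ on $\overline{B}_{m_0}$ and $\supp(\eta) \subset B_{m_0+1}$. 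Using $\cL(\eta w) = \eta\cL w + w\cL\eta + \langle a\nabla w,\nabla\eta\rangle$, the function $\tilde w := \eta w$ satisfies
\[
\partial_t \tilde w + \cL \tilde w - r \tilde w = \eta \tilde F + w\,\cL\eta + \langle a\nabla w, \nabla\eta\rangle \quad \text{on } \cO_{m_0+1},
\]
with zero Dirichlet data on $[0,T]\times\partial B_{m_0+1}$ and zero terminal condition on $B_{m_0+1}$. By the uniform bounds on $w$ and $\nabla w$, the right-hand side is bounded in $L^\infty(\cO_{m_0+1})$ by a constant $C_1 = C_1(m_0,\eps,\delta)$, hence in $L^p(\cO_{m_0+1})$ for every $p$.

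Finally, the global $W^{1,2,p}$-estimate for linear parabolic PDEs with homogeneous parabolic boundary data (\cite[Thm.~2.6.5 and Rem.~2.6.4]{bensoussan2011applications}) applied on $\cO_{m_0+1}$ gives $\|\tilde w\|_{W^{1,2,p}(\cO_{m_0+1})} \le C(m_0,p)\,C_1$. Since $\eta \equiv 1$ on $\overline{B}_{m_0}$, we have $\tilde w = u-g$ on $\cO_{m_0}$, so
\[
\|u\|_{W^{1,2,p}(\cO_{m_0})} \le \|\tilde w\|_{W^{1,2,p}(\cO_{m_0+1})} + \|g\|_{W^{1,2,p}(\cO_{m_0})} \le N_2(m_0,\eps,\delta,p),
\]
and the bound on $\|u\|_{C^{0,1,\beta}(\overline{\cO}_{m_0})}$ follows from the Sobolev embedding \eqref{eq:inclW12C01}. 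The main obstacle, and the reason one cannot simply reuse Proposition~\ref{prp:hldrnrme}, is that the corresponding $W^{1,2,p}$-estimate applied directly to $u$ on the full domain $\cO_q$ depends on $\|g_q\|_{W^{1,2,p}(\cO_q)}$ and on $M_3(q)$, both of which blow up with $q$; the cutoff construction above is precisely what localises the estimate and keeps $N_2$ independent of $q$.
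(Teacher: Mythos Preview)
Your argument is correct and follows essentially the same route as the paper: localise by a spatial cutoff so that the resulting function has zero lateral boundary data on $\cO_{m_0+1}$, use the product rule $\cL(\eta w)=\eta\cL w+w\cL\eta+\langle a\nabla\eta,\nabla w\rangle$ together with Remark~\ref{rem:locbndu} and Proposition~\ref{prop:gradbndU} (applied with $m_0$ replaced by $m_0+1$, which is why the hypothesis $q\ge m_0+3$ is needed) to bound the new right-hand side in $L^\infty(\cO_{m_0+1})$, and then invoke the global $W^{1,2,p}$-estimate from \cite[Thm.~2.6.5 and Rem.~2.6.4]{bensoussan2011applications} followed by the Sobolev embedding~\eqref{eq:inclW12C01}. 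The only cosmetic difference is that the paper multiplies $u$ directly by $\xi_{m_0}$ and keeps the nonzero terminal datum $\xi_{m_0}(x)g(T,x)$ (absorbing its contribution into $\|\xi_{m_0}g\|_{W^{1,2,p}(\cO_{m_0+1})}$), whereas you first subtract $g$ to make the terminal condition vanish; both variants yield the same bound.
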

\begin{proof}
Define $\varphi(t,x)\coloneqq \xi_{m_0}(x)u^{\varepsilon,\delta}_q(t,x)$. Since $u^{\varepsilon,\delta}_q$ solves \eqref{eq:penprob} and $f_q=f$, $g_q=g$ and $h_q=h$ on $\cO_{m_0+1}$, then $\varphi$ solves
\[
\partial_t\varphi+\cL\varphi-r\varphi = \xi_{m_0}\big[-h-\tfrac{1}{\delta}(g-u^{\varepsilon,\delta}_q)^++\psi_\varepsilon\big(|\nabla u^{\varepsilon,\delta}_q|^2_d-f^2\big)\big] + Q,\quad\text{ on }\cO_{m_0+1},
\]
where $Q(t,x)=u^{\varepsilon,\delta}_q(t,x)(\cL\xi_{m_0})(x)+2\langle a(x)\nabla\xi_{m_0}(x),\nabla u^{\eps,\delta}_q(t,x)\rangle$, and with boundary conditions $\varphi(t,x)=0$ for $x\in\partial B_{m_0+1}$ and $\varphi(T,x)=\xi_{m_0}(x)g(T,x)$ for $x\in B_{m_0+1}$. As in \eqref{eq:W12pnorm} we have
\begin{align}\label{eq:W12norm2}
&\|\varphi \|_{W^{1,2,p}(\cO_{m_0+1})}\\
&\leq C\Big(\big\|\xi_{m_0}\big[h\!+\!\tfrac{1}{\delta}{(g\!-\!u^{\eps,\delta}_q)^+}\!-\!\psi_\varepsilon(|\nabla u^{\varepsilon,\delta}_q |_d^2\!-\!f^2)\big]\!+\!Q\big\|_{L^p(\cO_{m_0+1})}\!+\!\|\xi_{m_0}g\|_{W^{1,2,p}(\cO_{m_0+1})}\Big)\notag
\end{align}
for any $p\in(1,\infty)$ and with $C>0$ independent of $\varepsilon$, $\delta$ and $q$. Denoting $| \cO_{m_0+1}|$ the volume of $\cO_{m_0+1}$ and using Proposition \ref{prop:gradbndU} we obtain 
\begin{align}\label{eq:W12pu}
&\|u^{\varepsilon,\delta}_q\|_{W^{1,2,p}(\cO_{m_0})}\le \|\varphi \|_{W^{1,2,p}(\cO_{m_0+1})}\\
& \leq C|\cO_{m_0}|^{\frac{1}{p}}\left( \| h+\tfrac{1}{\delta}g +Q\|_{C^0(\overline\cO_{m_0+1})}+\tfrac{1}{\varepsilon}(N_1)^2\right)+C\|\xi_{m_0}g\|_{W^{1,2,p}(\cO_{m_0+1})},\notag
\end{align}
where the first inequality is due to $u^{\eps,\delta}_{q}=\varphi$ on $\cO_{m_0}$. Since $Q$ is bounded on $\overline \cO_{m_0+1}$ independently of $q$, the estimate above and Sobolev embedding \eqref{eq:inclW12C01} give us \eqref{eq:C01bound}.
\end{proof}

\subsection{Penalised problem on unbounded domain}

Combining the results obtained so far we can prove existence and uniqueness of the solution to a penalised problem on $\R^{d+1}_{0,T}$. 

\begin{problem}\label{prb:penprobRd}
Find $u=u^{\eps,\delta}$ with $u^{\varepsilon,\delta}\in (C^{1,2,\alpha}_{Loc}\cap W^{1,2,p}_{\ell oc})(\R^{d+1}_{0,T})$, for any $p\in(1,\infty)$ and $\alpha\in(0,1)$ as in Assumption \ref{ass:gen2}, that solves:
\begin{align}\label{eq:pdeinRdeps}
\begin{cases}\partial_tu+\mathcal{L}u-ru=-h-\frac{1}{\delta}\left( g-u\right)^++\psi_\varepsilon\left(|\nabla u|_d^2-f^2\right), &\text{on $[0,T)\!\times\!\R^d$}, \\
u(T,x)=g(T,x), &\text{for all $x\in\R^d$}.
\end{cases}
\end{align}\hfill $\blacksquare$
\end{problem}

\begin{theorem}\label{thm:highreguued}
There exists a solution $u^{\varepsilon,\delta}$ of Problem \ref{prb:penprobRd}.
\end{theorem}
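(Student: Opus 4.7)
The plan is to construct $u^{\varepsilon,\delta}$ as a local limit of the family $\{u^{\varepsilon,\delta}_m\}_{m\in\N}$ of solutions to Problem \ref{prb:penprob} (whose existence is given by Theorem \ref{thm:exisolpenprb}), and then to upgrade the regularity of the limit via interior Schauder estimates. Fix any $p>d+2$ and $\beta = 1-(d+2)/p\in(0,1)$. For each compact ball $B_{m_0}\subset\R^d$, Proposition \ref{prp:W12pboundtemp} gives
\[
\sup_{m\ge m_0+3}\big(\|u^{\varepsilon,\delta}_m\|_{W^{1,2,p}(\cO_{m_0})}+\|u^{\varepsilon,\delta}_m\|_{C^{0,1,\beta}(\overline{\cO}_{m_0})}\big)<\infty.
\]
By reflexivity of $W^{1,2,p}$ and the compact embedding \eqref{eq:inclW12C01}, a subsequence converges weakly in $W^{1,2,p}(\cO_{m_0})$ and strongly in $C^{0,1,\beta'}(\overline{\cO}_{m_0})$ for any $\beta'<\beta$. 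A standard diagonal extraction along an exhausting sequence of balls yields a function $u^{\varepsilon,\delta}\in W^{1,2,p}_{\ell oc}(\R^{d+1}_{0,T})$ and a single subsequence (still denoted $u^{\varepsilon,\delta}_m$) such that $u^{\varepsilon,\delta}_m\to u^{\varepsilon,\delta}$ in $C^{0,1,\beta'}(\overline{\cO}_{m_0})$ for every $m_0\in\N$.

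Next I would pass to the limit in \eqref{eq:penprob}. Recall that by construction $h_m=h$, $g_m=g$, $f_m=f$ on $\cO_{m-1}$, so for $m$ large these functions equal their limits on $\overline\cO_{m_0}$. The uniform convergence $\nabla u^{\varepsilon,\delta}_m\to\nabla u^{\varepsilon,\delta}$ on $\overline\cO_{m_0}$, the smoothness of $\psi_\varepsilon$, and the Lipschitz continuity of $y\mapsto(g-y)^+$ imply that the nonlinear terms
\[
\tfrac{1}{\delta}(g_m-u^{\varepsilon,\delta}_m)^+\quad\text{and}\quad \psi_\varepsilon\big(|\nabla u^{\varepsilon,\delta}_m|_d^2-f_m^2\big)
\]
converge uniformly on $\overline\cO_{m_0}$ to their natural counterparts built from $u^{\varepsilon,\delta}$, $g$ and $f$. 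Combined with the weak convergence of $\partial_t u^{\varepsilon,\delta}_m$ and $D^2 u^{\varepsilon,\delta}_m$ in $L^p(\cO_{m_0})$, this shows that $u^{\varepsilon,\delta}$ satisfies the PDE in \eqref{eq:pdeinRdeps} a.e.\ on $[0,T)\times\R^d$. For the terminal condition, once $m\ge m_0+2$ one has $\xi_{m-1}\equiv 1$ on $\overline B_{m_0}$, hence $u^{\varepsilon,\delta}_m(T,x)=g_m(T,x)=g(T,x)$ for every $x\in\overline B_{m_0}$; this equality is preserved under the uniform limit, giving $u^{\varepsilon,\delta}(T,\cdot)=g(T,\cdot)$ on $\R^d$.

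Finally, to upgrade to $u^{\varepsilon,\delta}\in C^{1,2,\alpha}_{Loc}(\R^{d+1}_{0,T})$, choose $p$ so large that $\beta=1-(d+2)/p>\alpha$. Then $\nabla u^{\varepsilon,\delta}\in C^{0,\beta}_{Loc}$ so that, using $f^2\in C^{1,2,\alpha}_{\ell oc}$, $g\in C^{1,2,\alpha}_{\ell oc}$, $h\in C^{0,1,\alpha}_{\ell oc}$ and smoothness of $\psi_\varepsilon$, the right-hand side of \eqref{eq:pdeinRdeps} lies in $C^{0,\alpha}_{Loc}$. Interior parabolic Schauder estimates applied on any $\cO_{m_0}$ (with boundary data inherited from $g$ on $\{T\}\times B_{m_0}$, which is $C^{2,\alpha}$) then give $u^{\varepsilon,\delta}\in C^{1,2,\alpha}_{Loc}(\R^{d+1}_{0,T})$.

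The main obstacle is making the passage to the limit in the nonlinear penalisation term rigorous, which requires the strong convergence of the gradients; this is exactly what the compact embedding of $W^{1,2,p}$ into $C^{0,1,\beta}$ provides thanks to the $m$-uniform bound in Proposition \ref{prp:W12pboundtemp}. The rest is a bootstrapping argument between Sobolev and Hölder spaces that is routine once those uniform estimates are in hand.
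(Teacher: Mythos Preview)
Your proposal is correct and follows essentially the same approach as the paper: extract a locally convergent subsequence of $u^{\varepsilon,\delta}_m$ using the $m$-uniform bounds of Proposition~\ref{prp:W12pboundtemp}, pass to the limit in the PDE via weak $W^{1,2,p}$-convergence of the linear part and strong $C^{0,1}$-convergence of the nonlinear terms, and then bootstrap to $C^{1,2,\alpha}_{Loc}$. The only noteworthy difference is in the last step. The paper, rather than invoking interior Schauder estimates directly on the strong solution, constructs a classical solution $v\in C^{1,2,\alpha}_{Loc}(\cO)$ of the linear PDE with the now-$C^\alpha$ right-hand side and boundary data $u^{\varepsilon,\delta}|_{\partial_P\cO}$ (via \cite[Thm.~3.4.9]{friedman2008partial}), and then uses uniqueness of $W^{1,2,p}$-solutions to identify $v=u^{\varepsilon,\delta}$. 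This is slightly cleaner because interior Schauder estimates are a priori bounds for classical solutions, so applying them to a function known only to be in $W^{1,2,p}$ still requires an identification step of this kind; your phrasing hides that small bridge, but the argument is the same in substance.
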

\begin{proof}
Fix $n$ and take $m>n+3$. From Proposition \ref{prp:W12pboundtemp} we know that for any $\beta\in(0,1)$ and $p\in(1,\infty)$, the norms $\|u^{\varepsilon,\delta}_m\|_{W^{1,2,p}(\cO_{n})}$ and $\|u^{\varepsilon,\delta}_m\|_{C^{0,1,\beta}(\overline{\cO}_{n})}$ are bounded by a constant independent of $m$. By weak compactness in $W^{1,2,p}$ and Ascoli-Arzel\'a's theorem we can then extract a sequence $(u^{\eps,\delta}_{m^n_k})_{k\in\N}$ and there exists a function $u^{\eps,\delta;n}\in W^{1,2,p}(\cO_{n})$ (both possibly depending on the choice of $\cO_n$) such that, as $k\to\infty$ (and $m^n_k\to\infty$), we obtain
\begin{align}\label{eq:weakconv}
\begin{array}{l}
u^{\varepsilon,\delta}_{m^n_k} \to u^{\varepsilon,\delta;n}\quad\text{and}\quad\nabla u^{\varepsilon,\delta}_{m^n_k} \to \nabla u^{\varepsilon,\delta;n}\quad\text{ in } {C^{\alpha}(\overline{\cO}_{n})}, \\[+5pt]
\partial_t u^{\varepsilon,\delta}_{m^n_k} \to \partial_t u^{\varepsilon,\delta;n}\quad\text{and}\quad D^2u^{\varepsilon,\delta}_{m^n_k} \to D^2u^{\varepsilon,\delta;n}\quad\text{weakly in }L^p(\cO_{n}).
\end{array}
\end{align}
Since the sequence $(u^{\eps,\delta}_{m^n_k})_{k\in\N}$ is bounded also in the $W^{1,2,p}(\cO_{n+1})$-norm (perhaps by a larger constant), then up to selecting a further subsequence we have convergence as in \eqref{eq:weakconv} but with $n$ replaced by $n+1$.
Therefore $u^{\eps,\delta;n}=u^{\eps,\delta;n+1}$ on $\cO_{n}$. Using that $\overline\cO_n\uparrow \R^{d+1}_{0,T}$ as $n\to\infty$ and iterating the extraction of further subsequences (if needed), we can uniquely define a limit function $u^{\varepsilon,\delta}\in (C^{0,1,\alpha}_{\ell oc}\cap W^{1,2,p}_{\ell oc})(\R^{d+1}_{0,T})$. 

Fix $n$ and take $m>n$. Multiply the PDE solved by $u^{\eps,\delta}_m$ (see \eqref{eq:penprob}) by a test function supported on $\overline \cO_n$. Then passing to the limit along the subsequence constructed above, it is standard procedure to show that $u^{\eps,\delta}$ satisfies the first equation in \eqref{eq:pdeinRdeps} in the a.e.\ sense on $\cO_n$ thanks to locally uniform convergence on compacts of $u^{\eps,\delta}_m$ and $\nabla u^{\eps,\delta}_m$ and the weak convergence of $\partial_t u^{\eps,\delta}_m$ and $D^2 u^{\eps,\delta}_m$. Since, this can be done for any $\cO_n$ and $u^{\eps,\delta}(T,\cdot)=g(T,\cdot)$, then $u^{\eps,\delta}$ solves \eqref{eq:pdeinRdeps} in the a.e.\ sense (it is a {\em strong} solution). 
It now remains to prove it is actually a classical solution. 

Fix an arbitrary open bounded domain $\cO\subset\R^{d+1}_{0,T}$ with smooth parabolic boundary $\partial_P\cO$. Let $v\in C^{1,2,\alpha}_{Loc}(\cO)$ be the unique classical solution of the boundary value problem  
\begin{align}\label{eq:strnsolpde2}
\begin{cases}
\partial_tv+\mathcal{L}v-rv=-h-\tfrac{1}{\delta}\left(g-u^{\varepsilon,\delta}\right)^++\psi_\varepsilon\left(|\nabla u^{\varepsilon,\delta}|_d^2-f^2\right), &\text{ on } \cO, \\
v(t,x)=u^{\eps,\delta}(t,x), &\text{for $(t,x)\in\partial_P\cO$}.
\end{cases}
\end{align}
Existence and uniqueness of such $v$ is guaranteed by \cite[Thm.\ 3.4.9]{friedman2008partial} because 
\[
-h-\tfrac{1}{\delta}\big(g-u^{\varepsilon,\delta}\big)^++\psi_\varepsilon\big(|\nabla u^{\varepsilon,\delta}|_d^2-f^2\big)\in C^{\alpha}(\overline\cO),
\] 
and $\cL$ is uniformly elliptic on $\overline{\cO}$ with continuously differentiable coefficients (Assumption \ref{ass:gen1}). Since $v$ is also a strong solution, then $v-u^{\eps,\delta}\in W^{1,2,p}(\cO)$ is a strong solution of $\partial_tw+\cL w-rw=0$ in $\cO$ with $w=0$ on $\partial_P\cO$. It follows that $\|v-u^{\eps,\delta}\|_{W^{1,2,p}(\cO)}=0$ by the same estimate as in \eqref{eq:W12pnorm}. By arbitrariness of $\cO$ we can choose a $C^{1,2,\alpha}_{L oc}$-representative of $u^{\eps,\delta}$, as claimed.
\end{proof}

We now give a probabilistic representation for  $u^{\eps,\delta}$ analogue of \eqref{eq:probrap} but on unbounded domain. 
For $(n,\nu)\in\cA^\circ_t$ and $w\in\cT^\delta_t$ let us denote by $\cJ^{\eps,\delta}_{t,x}(n,\nu,w)$ a payoff analogue of \eqref{eq:Jpen} but with $\rho_{m}$, {$g_m$, $h_m$} replaced by $T-t$, {$g$, $h$, respectively,} and with the Hamiltonian $H^\eps_m$ replaced by
\begin{align}\label{eq:Heps}
H^{\varepsilon}(t,x,y)\coloneqq \sup_{p\in\mathbb{R}^d}\left\{\langle y, p\rangle-\psi_\eps\left(|p|^2_d- f^2(t,x)\right)\right\}.
\end{align}
Notice that
\begin{align}\label{rmk:Hnoninc}
t\mapsto H^{\varepsilon}(t,x,y)\:\text{is non-increasing for all $(x,y)\in\R^d\times\R^d$},
\end{align}
because $t\mapsto f(t,x)$ is non-increasing by Assumption \ref{ass:gen2}. Moreover, taking $p=\eps y/2$ in $H^{\varepsilon}$ gives
\begin{align}\label{eq:lowbnHeps}
H^{\varepsilon}(t,x,y)\geq \frac{\eps}{2}|y|_d^2-\psi_\eps\left({\tfrac{\eps^2}{4}|y|^2_d-f^2(t,x)}\right)\geq \frac{\eps}{2}|y|_d^2-\psi_\eps\left({\tfrac{\eps^2}{4}|y|^2_d}\right)\geq \frac{\eps}{4}|y|_d^2.
\end{align}
\begin{proposition}\label{lem:prbraprRD}
Let $u^{\varepsilon,\delta}$ be a solution of Problem  \ref{prb:penprobRd}. Then 
\begin{align}\label{eq:valueunbdd}
u^{\varepsilon,\delta}(t,x)=\inf_{(n,\nu)\in\cA^{\circ}_t}\sup_{w\in\cT^\delta_t}\cJ^{\varepsilon,\delta}_{t,x}(n,\nu,w)=\sup_{w\in \cT^\delta_t}\inf_{(n,\nu)\in\cA^{\circ}_t}\cJ^{\varepsilon,\delta}_{t,x}(n,\nu,w).
\end{align}
\end{proposition}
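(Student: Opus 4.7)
The plan is to mimic the proof of Proposition~\ref{prp:probrap1}, with the bounded-domain exit time $\rho_m$ replaced by a purely technical spatial localisation $\rho_R:=\inf\{s\geq 0\,|\,|X^{[n,\nu]}_s|_d\geq R\}\wedge(T-t)$. Since by Theorem~\ref{thm:highreguued} we have $u^{\eps,\delta}\in C^{1,2,\alpha}_{Loc}(\R^{d+1}_{0,T})$ and it solves \eqref{eq:pdeinRdeps} classically, Dynkin's formula applied to $s\mapsto R^w_s u^{\eps,\delta}(t+s,X^{[n,\nu]}_s)$ on $[0,\rho_R]$, for arbitrary $(n,\nu)\in\cA^\circ_t$ and $w\in\cT^\delta_t$, yields the truncated analogue of \eqref{eq:repensol} with $g,h,f$ in place of $g_m,h_m,f_m$.

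For the upper bound $u^{\eps,\delta}\leq\underline{v}^{\eps,\delta}$, I would pick $w=w^*$ as in \eqref{eq:dfnoptw*} (with $g$ in place of $g_m$), apply the Hamiltonian inequality $-\langle n_s\dot\nu_s,\nabla u^{\eps,\delta}\rangle-\psi_\eps(|\nabla u^{\eps,\delta}|_d^2-f^2)\leq H^\eps(\cdot,n_s\dot\nu_s)$, and pass $R\to\infty$. Three ingredients enable this limit: (i)~taking $m\to\infty$ in Lemma~\ref{lem:polygrow} along the subsequence from the proof of Theorem~\ref{thm:highreguued} gives $0\leq u^{\eps,\delta}(t,x)\leq K_3(1+|x|_d^2)$ on $\R^{d+1}_{0,T}$; (ii)~standard SDE estimates for $(n,\nu)\in\cA^\circ_t$ give $\E_x[\sup_{s\leq T-t}|X^{[n,\nu]}_s|_d^2]<\infty$, so $\rho_R\uparrow T-t$ a.s.; (iii)~the integrand $h+w^*_s g+H^\eps(\cdot,n_s\dot\nu_s)$ is non-negative, so monotone convergence handles the integral term while dominated convergence (dominated by the quadratic envelope) handles the boundary term, producing $\E_x[R^{w^*}_{T-t}g(T,X^{[n,\nu]}_{T-t})]$ in the limit. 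This yields $u^{\eps,\delta}(t,x)\leq\cJ^{\eps,\delta}_{t,x}(n,\nu,w^*)$ and then $u^{\eps,\delta}(t,x)\leq\underline{v}^{\eps,\delta}(t,x)$ exactly as in the concluding step of the proof of Proposition~\ref{prp:probrap1}.

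For the lower bound $u^{\eps,\delta}\geq\overline{v}^{\eps,\delta}$, define $(n^*,\nu^*)$ by the feedback rule \eqref{eq:optcntr} with $u^{\eps,\delta},f$ in place of $u^{\eps,\delta}_m,f_m$. Because $u^{\eps,\delta}\in C^{1,2,\alpha}_{Loc}$ and $\psi_\eps\in C^2$, the associated SDE for $X^*$ has locally Lipschitz coefficients and admits a unique strong solution up to its explosion time. The main obstacle is to ensure non-explosion of $X^*$ on $[0,T-t]$, since Proposition~\ref{prop:gradbndU} provides only \emph{local} bounds on $\nabla u^{\eps,\delta}$ with constants $N_1(m_0)$ whose dependence on the radius $m_0$ is not controlled. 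I would circumvent this by a further truncation $\dot\nu^{*,R}_s:=\dot\nu^*_s\,\ind_{\{s\leq\sigma_R\}}$, with $\sigma_R:=\inf\{s\,|\,|X^*_s|_d\geq R\}$, so that the extra drift vanishes after $\sigma_R$ and $X^{*,R}$ exists globally on $[0,T-t]$. Applying Dynkin on $[0,\sigma_R]$, where the Hamiltonian equality \eqref{eq:Ham*} holds by construction of $(n^*,\nu^{*,R})$, and using the coercivity $H^\eps(\cdot,y)\geq\tfrac{\eps}{4}|y|_d^2$ from \eqref{eq:lowbnHeps} together with the quadratic growth of $u^{\eps,\delta}$, yields the uniform energy bound $\E_x\bigl[\int_0^{(T-t)\wedge\sigma_R}|\dot\nu^{*,R}_s|^2\,\ud s\bigr]\leq\tfrac{4}{\eps}K_3(1+|x|_d^2)$. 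A Gronwall argument then gives uniform moment control on $X^{*,R}$ and hence $\sigma_R\uparrow T-t$ a.s.; passing to the limit by monotone/dominated convergence produces $u^{\eps,\delta}(t,x)\geq\cJ^{\eps,\delta}_{t,x}(n^*,\nu^*,w)$ for arbitrary $w\in\cT^\delta_t$, whence $u^{\eps,\delta}(t,x)\geq\overline{v}^{\eps,\delta}(t,x)$. Combined with the previous paragraph and the standard inequality $\underline{v}^{\eps,\delta}\leq\overline{v}^{\eps,\delta}$, this proves \eqref{eq:valueunbdd}; the delicate non-explosion analysis in this last step is the expected main difficulty.
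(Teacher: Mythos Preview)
Your proposal is correct and follows essentially the same route as the paper's proof: Dynkin's formula on $[0,\rho_R]$ (the paper writes $\rho_m$), the Hamiltonian inequality plus the choice $w^*$ for the upper bound, and the feedback pair $(n^*,\nu^*)$ with the coercivity $H^\eps(\cdot,y)\ge\tfrac{\eps}{4}|y|_d^2$ from \eqref{eq:lowbnHeps} to derive the $L^2$ energy bound on $\dot\nu^*$ and hence non-explosion of $X^*$. The only cosmetic differences are that the paper passes to the limit with generic $w$ before specialising to $w^*$, uses Fatou's lemma (rather than dominated convergence) for the lower-bound limit, and works directly with the stopped process $X^*_{\cdot\wedge\zeta_m}$ instead of your auxiliary globally-defined $X^{*,R}$; none of these affects the argument.
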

\begin{proof}
Fix $[(n,\nu),w]\in\cA^\circ_t\times\cT^\delta_t$. An application of Dynkin's formula to 
$R^w_{\rho_m}u^{\eps,\delta}(t+\rho_{m},X^{[n,\nu]}_{\rho_{m}})$
combined with \eqref{eq:pdeinRdeps} gives (recall $u^{\eps,\delta}\in (W^{1,2,p}_{\ell oc}\cap C^{0,1,\beta}_{\ell oc})(\R^{d+1}_{0,T})$)
\begin{align}\label{eq:penver}
u^{\eps,\delta}(t,x)=\E_x\biggr[&\, R^w_{\rho_{m}}u^{\eps,\delta}\big(t+\rho_{m},X^{[n,\nu]}_{\rho_{m}}\big)\\
&+\!\int_0^{\rho_{m}}\!R^w_s\Big(h\!+\!\tfrac{1}{\delta}\big(g\!-\!u^{\eps,\delta}\big)^+\!-\!\psi_\eps\big(|\nabla u^{\eps,\delta}|^2_d\!-\!f^2\big) \Big)\big(t\!+\!s,X^{[n,\nu]}_s\big)\,\ud s\notag \\
&\,+\!\int_0^{\rho_{m}}\!R^w_s\Big(w_su^{\eps,\delta}\big(t\!+\!s,X_s^{[n,\nu]}\big)\!-\!\big\langle n_s\dot{\nu}_s,\nabla u^{\eps,\delta}\big(t\!+\!s,X_s^{[n,\nu]}\big)\big\rangle\Big)\,\ud s\biggr].\notag
\end{align}
By definition of the Hamiltonian $H^\eps$ in \eqref{eq:Heps} we have
\begin{align}\label{eq:penver3}
u^{\eps,\delta}(t,x)\le&\,\E_x\Big[R^w_{\rho_{m}}u^{\eps,\delta}\big(t+\rho_{m},X^{[n,\nu]}_{\rho_{m}}\big)\Big]\\
&\,+\E_x\bigg[\int_0^{\rho_{m}}\!R^w_s\Big(h\!+\!\tfrac{1}{\delta}\big(g\!-\!u^{\eps,\delta}\big)^+\!+\!w_s u^{\eps,\delta}\!+\!H^\eps(\cdot,n_s\dot{\nu}_s)\Big)\big(t\!+\!s,X^{[n,\nu]}_s\big)\,\ud s\biggr].\notag
\end{align}
Letting $m\uparrow \infty$ we have $\rho_{m}\uparrow T-t$, $\P_x$-a.s. We can take the limit inside the second expectation by monotone convergence as all the terms under the integral are positive. By Lemma \ref{lem:polygrow} the term under the first expectation has quadratic growth in $X^{[n,\nu]}$. Thanks to standard estimates for SDEs (see \cite[Thm.\ 2.5.10]{krylov1980controlled}) there is a constant $c>0$ independent of $m$ such that
\[
\E_x\Big[\sup_{0\le s\le T-t}\big|X^{[n,\nu]}_s\big|^2_d\Big]\le c\big(1+|x|^2_d+\E_x[|\nu_{T-t}|^2]\big).
\] 
Then, dominated convergence and $u^{\eps,\delta}(T,\,\cdot\,)=g(T,\,\cdot\,)$ give us
\begin{align}\label{eq:penver2}
u^{\eps,\delta}(t,x)\!\le\! \E_x\bigg[R^w_{T-t}g\big(T,X^{[n,\nu]}_{T-t}\big)\!+\!\int_0^{T-t}\!\!\!\!R^w_s\Big(h\!+\!\tfrac{1}{\delta}\big(g\!-\!u^{\eps,\delta}\big)^+\!+\!w_s u^{\eps,\delta}\!+\!H^\eps(\cdot,n_s\dot{\nu}_s)\Big)\big(t\!+\!s,X^{[n,\nu]}_s\big)\,\ud s\bigg].
\end{align}
By arguments as in the proof of Proposition \ref{prp:probrap1}, with $w^*\in \cT^{\delta}_t$ defined as in \eqref{eq:dfnoptw*} but with $u$ and $g_m$ replaced by $u^{\eps,\delta}$ and $g$, respectively, we obtain $u^{\eps,\delta}(t,x)\le \cJ_{t,x}^{\varepsilon,\delta}(n,\nu,w^*)$. Therefore 
\begin{align}\label{eq:late0}
u^{\eps,\delta}(t,x)\leq \sup_{w\in\cT^\delta_t} \inf_{(n,\nu)\in\cA^{\circ}_t}\cJ_{t,x}^{\varepsilon,\delta}(n,\nu,w).
\end{align}

As in Proposition \ref{prp:probrap1}, for the reverse inequality 
we set $X^*=X^{[n^*,\nu^*]}$ and denote
\begin{equation}\label{eq:optcntr2}
\begin{split}
n_s^*\coloneqq &\!\begin{cases} -\frac{\nabla u^{\eps,\delta}(t+s,X_s^*)}{|\nabla u^{\eps,\delta}(t+s,X_s^*)|_d}, &\quad\text{if } \nabla u^{\eps,\delta}(t+s,X_s^*)\neq \mathbf{0},\\[+5pt]
\text{{any unit vector}}, &\quad\text{if }\nabla u^{\eps,\delta}(t+s,X_s^*)=\mathbf{0},
\end{cases}\\
\dot{\nu}_s^*\coloneqq &\, 2\psi_{\varepsilon}'\left(|\nabla u^{\eps,\delta}(t+s,X_s^*)|_d^2- f^2(t+s,X_s^*)\right)|\nabla u^{\eps,\delta}(t+s,X_s^*)|_d.
\end{split}
\end{equation}
We claim here and will prove later that $(n^*,\nu^*)\in\cA^\circ_t$ and the SDE for $X^*$ admits a unique non-exploding strong solution. 
For $(n^*,\nu^*)$ equality holds in \eqref{eq:penver3}. As $m\uparrow \infty$ Fatou's lemma gives $u^{\eps,\delta}(t,x)\ge \cJ_{t,x}(n^*,\nu^*,w)$, hence
\begin{align}\label{eq:late1}
u^{\eps,\delta}(t,x)\ge \inf_{(n,\nu)\in\cA^{\circ}_t}\sup_{w\in\cT^\delta_t}\cJ_{t,x}(n,\nu,w).
\end{align}
Combining \eqref{eq:late0} and \eqref{eq:late1} we conclude.

It remains to check that $(n^*,\nu^*)\in\cA^\circ_t$ and $X^*$ is non-exploding. We use an argument from \cite[Lemma 13.7]{soner1989regularity}. Let $\zeta_m=\inf\{s\ge0: |X^*_s|_d\ge m\}$. On the random time-interval $[0,\zeta_m\wedge(T\!-\!t)]$ the process $X^*$ is well-defined and the pair $(n^*,\nu^*)$ is adapted because $u^{\eps,\delta}\in C^{1,2,\alpha}_{Loc}(\cO_m)\cap C^{0,1,\alpha}(\overline\cO_m)$.  Notice that $\zeta_{k}\le \zeta_{k+1}$ and it may occur $\zeta_\infty:=\lim_{k\to\infty}\zeta_k< T-t$ with positive probability. {Moreover $\rho_m=\zeta_m\wedge(T\!-\!t)$ in \eqref{eq:penver} and let us} take $w\equiv0$ therein. By construction, for $s\in[0,\zeta_m\wedge(T\!-\!t)]$
\begin{align*}
-\big\langle n^*_s\dot{\nu^*}_s,\nabla u^{\eps,\delta}(t+s,X_s^{*})\big\rangle-\psi_{\varepsilon}\left(|\nabla u^{\eps,\delta}|_d^2- f^2\right)(t+s,X_s^{*})= H^{\eps}(t+s,X_s^{*},n^*_s\dot{\nu^*}_s).
\end{align*}
Then, by positivity of all remaining terms in \eqref{eq:penver} 
\[
u^{\eps,\delta}(t,x)\ge \E_{x}\Big[\int_0^{{\zeta_m\wedge(T-t)}}e^{-rs}H^{\eps}(t+s,X_s^{*},n^*_s\dot{\nu}^*_s)\ud s\Big].
\]
By positivity of $H^\eps$ and monotone convergence, we can let $m\uparrow \infty$ and preserve the inequality while the integral in time extends to $\zeta_\infty\wedge (T-t)$. Combining with \eqref{eq:lowbnHeps} and Lemma \ref{lem:polygrow} we have
\begin{align}\label{eq:soner}
\frac{\eps}{4}\E_{x}\Big[\int_0^{\zeta_\infty\wedge (T-t)}e^{-rs}|\dot{\nu}^*_s|^2\,\ud s\Big]\le u^{\eps,\delta}(t,x)\le K_3(1+|x|^2_d).
\end{align}

Since $\dot{\nu}^*\ge 0$ then $s\mapsto\nu^*_s$ is non-decreasing and
\begin{align}
|\nu^*_{\zeta_m\wedge (T-t)}|^2=2\int_0^{\zeta_m\wedge (T-t)}\nu_s^*\dot{\nu}_s^*\,\ud s\leq \int_0^{\zeta_m\wedge (T-t)}|\nu_s^*|^2\,\ud s+\int_0^{\zeta_m\wedge (T-t)}|\dot{\nu}_s^*|^2\,\ud s,
\end{align}
where we used $2ab\le a^2+b^2$. By Gronwall's lemma and taking expectations we obtain 
\begin{align}
\E_x\big[|\nu^*_{\zeta_m\wedge (T-t)}|^2\big]\leq  e^T\E_x\Big[\int_0^{\zeta_m\wedge (T-t)}|\dot{\nu}_s^*|^2\,\ud s\Big]\leq e^{T(1+r)}\E_x\Big[\int_0^{\zeta_m\wedge (T-t)} e^{-rs}|\dot{\nu}_s^*|^2\,\ud s\Big].
\end{align}
Combining with \eqref{eq:soner} and letting $m\to\infty$, Fatou's lemma gives us 
\begin{align}\label{eq:Zinftybnd}
\E_x\big[|\nu^*_{\zeta_\infty\wedge (T-t)}|^2\big]\le 4e^{T(1+r)}\eps^{-1}K_3(1+|x|^2_d).
\end{align}
Linear growth of $(b,\sigma)$ and well-posedness of $X^*_{s\wedge\zeta_m}$ give, by Markov inequality and standard bounds,
\begin{align}
\P_x(\zeta_m < T\!-\!t)\!\le& \frac{1}{m^2}\E_x\Big[\sup_{s\in[0,\zeta_m\wedge (T-t)]}|X^{*}_s|^2_d\Big]\\
\leq&\frac{C}{m^2}\Big(1\!+\!|x|^2_d\!+\!\E_x\big[\big|\nu_{\zeta_m\wedge (T-t)}^*\big|^2\big]\Big)\le \frac{C}{m^2}c(\eps)(1\!+\!|x|^2_d),
\end{align}
where $C>0$ depends only on $T$ and $D_1$ from Assumption \ref{ass:gen1}, and $c(\eps)$ depends on the constants from \eqref{eq:Zinftybnd}. Since $\zeta_m\uparrow\zeta_\infty$, then $\P_x(\zeta_m<T-t)\downarrow \P_x(\zeta_\infty\le T-t)$ as $m\to\infty$ and by taking limits in the expression above we conclude $\P_x(\zeta_\infty\le T-t)=0$. Thus, $X^*_{s}$ is well-defined for all $s\in[0,T-t]$ and $\E_x[|\nu^*_{T-t}|^2]<\infty$, by \eqref{eq:Zinftybnd} implying $(n^*,\nu^*)\in\cA^\circ_t$ as claimed.
\end{proof}

Proposition \ref{lem:prbraprRD} implies that Problem \ref{prb:penprobRd} admits a {\em unique} solution and that the triple $[(n^*,\nu^*), w^*]$ is optimal in \eqref{eq:valueunbdd}.
By arguments as in the proof of Proposition \ref{cor:prbrap2edm} we also obtain the next result.
\begin{proposition}\label{cor:prbrap2ed}
Let $u^{\varepsilon,\delta}$ be the unique solution of Problem \ref{prb:penprobRd}. Then 
\begin{align}\label{eq:prbrap2d12}
u^{\varepsilon,\delta}(t,x)=\inf_{(n,\nu)\in\cA^{\circ}_t}\E_{x}\biggr[& e^{-(r+\delta^{-1})(T-t)}g(T,X_{T-t}^{[n,\nu]})\notag\\
&+\!\int_0^{T-t}e^{-(r+\delta^{-1})s}\big[h+\tfrac{1}{\delta}g\vee u^{\varepsilon,\delta}+H^{\varepsilon}(\,\cdot\,,n_s\dot{\nu}_s)\big](t+s,X^{[n,\nu]}_s)\,\ud s\biggr],
\end{align}
and the pair $(n^*,\nu^*)$ from \eqref{eq:optcntr2} is optimal.
\end{proposition}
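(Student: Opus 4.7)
The plan is to mimic the proof of Proposition \ref{cor:prbrap2edm}, but starting from the Dynkin identity \eqref{eq:penver} on the unbounded domain and then passing to the limit as $m\to\infty$ along the localising sequence $\rho_m$, exactly as is done in the proof of Proposition \ref{lem:prbraprRD}.

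First I would fix $(n,\nu)\in\cA^\circ_t$, specialise to the constant discount control $w_s\equiv \tfrac{1}{\delta}\in\cT^\delta_t$ in \eqref{eq:penver}, and use the algebraic identity
\[
\tfrac{1}{\delta}\bigl(g-u^{\eps,\delta}\bigr)^+ + \tfrac{1}{\delta}u^{\eps,\delta} = \tfrac{1}{\delta}\,g\vee u^{\eps,\delta}
\]
together with the Hamiltonian bound (see \eqref{eq:Ham} and \eqref{eq:Heps})
\[
-\langle n_s\dot\nu_s,\nabla u^{\eps,\delta}\rangle-\psi_{\varepsilon}\bigl(|\nabla u^{\eps,\delta}|_d^2-f^2\bigr)\le H^{\varepsilon}(\cdot,n_s\dot\nu_s).
\]
This yields the pre-limit inequality
\begin{align*}
u^{\eps,\delta}(t,x)\le \E_x\biggl[&\,e^{-(r+\delta^{-1})\rho_m}u^{\eps,\delta}\bigl(t+\rho_m,X^{[n,\nu]}_{\rho_m}\bigr)\\
&\,+\int_0^{\rho_m}e^{-(r+\delta^{-1})s}\bigl[h+\tfrac{1}{\delta}g\vee u^{\eps,\delta}+H^{\varepsilon}(\cdot,n_s\dot\nu_s)\bigr](t+s,X^{[n,\nu]}_s)\ud s\biggr].
\end{align*}

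Next I would pass to the limit $m\to\infty$. Since $\rho_m\uparrow T-t$ $\P_x$-a.s., the integral term goes up to $T-t$ by monotone convergence, as all integrands are non-negative. The boundary term is handled by dominated convergence together with Lemma \ref{lem:polygrow}, which gives the quadratic bound $0\le u^{\eps,\delta}(t,x)\le K_3(1+|x|_d^2)$, and standard linear-growth $L^2$ estimates on $\sup_{s\le T-t}|X^{[n,\nu]}_s|^2_d$ for $(n,\nu)\in\cA^\circ_t$. Using the terminal condition $u^{\eps,\delta}(T,\cdot)=g(T,\cdot)$, this gives
\[
u^{\eps,\delta}(t,x)\le \E_x\biggl[e^{-(r+\delta^{-1})(T-t)}g\bigl(T,X^{[n,\nu]}_{T-t}\bigr)+\!\int_0^{T-t}\!\!\!e^{-(r+\delta^{-1})s}\bigl[h+\tfrac{1}{\delta}g\vee u^{\eps,\delta}+H^{\varepsilon}(\cdot,n_s\dot\nu_s)\bigr](t+s,X^{[n,\nu]}_s)\ud s\biggr],
\]
and taking the infimum over $(n,\nu)\in\cA^\circ_t$ yields the ``$\le$'' part.

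For the reverse inequality (and optimality of $(n^*,\nu^*)$), I would plug the candidate $(n^*,\nu^*)$ from \eqref{eq:optcntr2} into \eqref{eq:penver} with $w\equiv\tfrac{1}{\delta}$. By the very definition of $n^*_s$ and $\dot\nu^*_s$, equality holds in the Hamiltonian bound, i.e.,
\[
-\bigl\langle n^*_s\dot\nu^*_s,\nabla u^{\eps,\delta}(t+s,X^*_s)\bigr\rangle-\psi_{\varepsilon}\bigl(|\nabla u^{\eps,\delta}|_d^2-f^2\bigr)(t+s,X^*_s)=H^{\varepsilon}(t+s,X^*_s,n^*_s\dot\nu^*_s),
\]
so the pre-limit relation is an equality. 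Admissibility $(n^*,\nu^*)\in\cA^\circ_t$ and non-explosion of $X^*$ have already been established in the proof of Proposition \ref{lem:prbraprRD}, which also provides $\E_x[|\nu^*_{T-t}|^2]<\infty$. Passing to the limit $m\to\infty$ by Fatou's lemma on the non-negative integrand and dominated convergence on the boundary term (using the quadratic growth bound again and the $L^2$ control on $\nu^*$) gives
\[
u^{\eps,\delta}(t,x)\ge \E_x\biggl[e^{-(r+\delta^{-1})(T-t)}g\bigl(T,X^{*}_{T-t}\bigr)+\!\int_0^{T-t}\!\!\!e^{-(r+\delta^{-1})s}\bigl[h+\tfrac{1}{\delta}g\vee u^{\eps,\delta}+H^{\varepsilon}(\cdot,n^*_s\dot\nu^*_s)\bigr](t+s,X^{*}_s)\ud s\biggr],
\]
which, combined with the first inequality, proves both \eqref{eq:prbrap2d12} and optimality of $(n^*,\nu^*)$. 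The main technical obstacle is the interchange of limit and expectation at the terminal boundary: this is resolved precisely as in \eqref{eq:penver2}--\eqref{eq:Zinftybnd}, where the quadratic a priori bound from Lemma \ref{lem:polygrow} and the $L^2$-bound on $\nu^*$ provide the needed uniform integrability.
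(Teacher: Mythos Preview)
Your proposal is correct and follows essentially the same approach as the paper: the paper's proof is a one-line reference to the arguments of Proposition \ref{cor:prbrap2edm} (take $w\equiv\tfrac{1}{\delta}$ in the Dynkin identity, use the identity $\tfrac{1}{\delta}(g-u)^++\tfrac{1}{\delta}u=\tfrac{1}{\delta}g\vee u$ and the Hamiltonian bound for ``$\le$'', then plug in $(n^*,\nu^*)$ for equality), combined with the $m\to\infty$ passage already carried out in Proposition \ref{lem:prbraprRD}. You have spelled these steps out explicitly and accurately.
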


\subsection{Refined estimates independent of $\eps$ and $\delta$} 
Here we develop bounds for the penalty terms in the PDE of Problem \ref{prb:penprobRd} which are independent of $\eps$ and $\delta$. 
\begin{lemma}\label{lem:bndobstpen2}
For $K_2$ as in \eqref{defn:infTheta} we have 
\begin{align}\label{eq:obstcpenbnd1}
\tfrac{1}{\delta}\big\|\big(g-u^{\varepsilon,\delta}\big)^+\big\|_{\infty}\le K_2.
\end{align}
\end{lemma}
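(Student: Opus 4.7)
The plan is to establish the pointwise lower bound $u^{\varepsilon,\delta}(t,x)\ge g(t,x)-\delta K_2$ on the whole of $\R^{d+1}_{0,T}$; taking positive parts and dividing by $\delta$ then gives \eqref{eq:obstcpenbnd1}. The starting point is the min-max representation from Proposition \ref{lem:prbraprRD}, which yields
\begin{align*}
u^{\varepsilon,\delta}(t,x)\ge \inf_{(n,\nu)\in\cA^{\circ}_t}\cJ^{\varepsilon,\delta}_{t,x}(n,\nu,w)
\end{align*}
for \emph{any} admissible $w\in\cT^\delta_t$. The idea is to choose the constant maximiser $w\equiv 1/\delta$, which makes the discount factor $R^w_s=e^{-\tilde r s}$ with $\tilde r:=r+1/\delta$ and turns the running $w_s g_m$ term of \eqref{eq:Jpen} into $\tfrac{1}{\delta}g$; this is the natural choice because it produces a cancellation $\tfrac{1}{\delta}g-\tilde r g=-rg$ that recovers $\Theta$ from \eqref{defn:infTheta}.

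I would then apply Dynkin's formula to $e^{-\tilde r (T-t)}g(T,X^{[n,\nu]}_{T-t})$ along the controlled dynamics of $\cA^\circ_t$ (where $\nu$ is absolutely continuous, so the formula is classical) and substitute into $\cJ^{\varepsilon,\delta}_{t,x}(n,\nu,1/\delta)$. After the cancellation noted above, the payoff rewrites as
\begin{align*}
\cJ^{\varepsilon,\delta}_{t,x}(n,\nu,1/\delta)=g(t,x)+\E_x\Big[\int_0^{T-t}\!\!e^{-\tilde r s}\big[\Theta+\langle n_s\dot{\nu}_s,\nabla g\rangle+H^{\varepsilon}(\cdot,n_s\dot{\nu}_s)\big](t+s,X^{[n,\nu]}_s)\,\ud s\Big].
\end{align*}
The key input is the Hamiltonian lower bound \eqref{rem:trickhamg}, whose proof uses only $|\nabla g|_d\le f$ (Assumption \ref{ass:gen2}(v)) and therefore applies with $g_m$ replaced by $g$ and $H^\varepsilon_m$ by $H^\varepsilon$; it gives $\langle n_s\dot{\nu}_s,\nabla g\rangle+H^{\varepsilon}(\cdot,n_s\dot{\nu}_s)\ge 0$ pointwise. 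Combined with $\Theta\ge -K_2$ from \eqref{defn:infTheta}, the integrand is bounded below by $-K_2$.

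To conclude I would bound the time integral: since $\tilde r\ge 1/\delta$, we have $\int_0^{T-t}e^{-\tilde r s}\ud s\le 1/\tilde r\le \delta$, so $\cJ^{\varepsilon,\delta}_{t,x}(n,\nu,1/\delta)\ge g(t,x)-\delta K_2$ uniformly in $(n,\nu)\in\cA^\circ_t$; taking the infimum and invoking the sup-inf representation yields $u^{\varepsilon,\delta}(t,x)\ge g(t,x)-\delta K_2$, as required. I do not foresee any serious obstacle: the argument is essentially a verification-type estimate exploiting a carefully chosen suboptimal strategy for the maximiser, and everything it uses (Proposition \ref{lem:prbraprRD}, the Hamiltonian bound \eqref{rem:trickhamg}, and the lower bound on $\Theta$) is already in place. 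The only point to double-check is that the terminal term $e^{-\tilde r(T-t)}g(T,X^{[n,\nu]}_{T-t})$ is integrable so that Dynkin's formula applies; this follows from the quadratic growth of $g$ (Assumption \ref{ass:gen2}(iv)) and standard moment bounds for the SDE under $(n,\nu)\in\cA^\circ_t$.
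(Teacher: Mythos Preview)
Your proposal is correct and follows essentially the same approach as the paper. The paper routes the choice $w\equiv 1/\delta$ through the recursive representation of Proposition~\ref{cor:prbrap2ed} (which produces an extra nonnegative term $\tfrac{1}{\delta}(g\vee u^{\varepsilon,\delta}-g)$ that is then dropped), whereas you apply the same choice directly in the sup--inf representation of Proposition~\ref{lem:prbraprRD}; after applying Dynkin's formula to $g$ both computations reduce to the identical integrand $\Theta+\langle n_s\dot\nu_s,\nabla g\rangle+H^\varepsilon(\cdot,n_s\dot\nu_s)$ and the same bound $-K_2\,\delta/(r\delta+1)\ge -K_2\delta$.
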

\begin{proof}
For any $(n,\nu)\in\cA^\circ_t$, the function $g$ has the same probabilistic representation as in \eqref{eq:gmprobrep} but with $g_m$, $\rho_{m}$ and $w$ replaced by $g$, $T-t$ and $1/\delta$, respectively. Combining that with the expression for $u^{\eps,\delta}$ in Proposition \ref{cor:prbrap2ed}, and recalling $\Theta$ defined in \eqref{defn:infTheta}, we get
\begin{align*}
&u^{\eps,\delta}(t,x)-g(t,x)\\
&=\inf_{(n,\nu)\in \cA^{\circ}_t}\E_{x}\biggr[\int_0^{T-t} \!\!e^{-(r+\delta^{-1})s}\Big[\Theta+\langle n_s\dot{\nu}_s,\nabla g\rangle\!+\!\tfrac{1}{\delta}g\vee u^{\eps,\delta}\!-\!\tfrac{1}{\delta}g\!+\! H^{\varepsilon}(\,\cdot\,,n_s\dot{\nu}_s)\Big](t+s,X_s^{[n,\nu]})\,\ud s\biggr].\notag
\end{align*}
As in \eqref{rem:trickhamg}, $[\langle n_s\dot{\nu}_s, \nabla g\rangle+H^{\varepsilon}(\cdot,n_s\dot{\nu}_s)](t+s,X_s^{[n,\nu]}) \geq0$, and observing that $g\vee u^{\eps,\delta}-g\geq0$ we get
\begin{align*}
&u^{\eps,\delta}(t,x)-g(t,x)\geq \inf_{(n,\nu)\in \cA^{\circ}_t}\E_{x}\biggr[\int_0^{T-t}\!\! e^{-(r+\delta^{-1})s} \Theta\big(t+s,X_s^{[n,\nu]}\big)\,\ud s\biggr]\ge -K_2\frac{\delta}{r\delta+1},
\end{align*}
where $K_2$ was defined in \eqref{defn:infTheta}. The above implies $\tfrac{1}{\delta}(g(t,x)-u^{\eps,\delta}(t,x))^+\leq K_2$ as needed.
\end{proof}
Next we give an upper bound on $\partial_tu^{\varepsilon,\delta}$. In the lemma below we understand 
\begin{align}\label{eq:lftderivT}
\partial_t u^{\varepsilon,\delta}(T,x)\coloneqq \lim_{s\to 0}\frac{u^{\varepsilon,\delta}(T,x)-u^{\varepsilon,\delta}(T-s,x)}{s}.
\end{align}
\begin{lemma}\label{lem:bndtimder}
There is $K_4>0$ only depending on $K_0$ and $K_2$ from Assumption \ref{ass:gen2} such that
\begin{align}\label{eq:tder}
\partial_t u^{\varepsilon,\delta}(t,x) \leq K_4,\quad \text{ for }(t,x)\in \R^{d+1}_{0,T}.
\end{align}
\end{lemma}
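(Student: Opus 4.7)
The plan is to prove the equivalent finite-difference bound $u^{\eps,\delta}(t+\eta,x) - u^{\eps,\delta}(t,x) \le K_4\eta$ for all $\eta\in(0, T-t]$; since $u^{\eps,\delta}\in C^{1,2}_{Loc}$ by Theorem~\ref{thm:highreguued}, dividing by $\eta$ and letting $\eta\downarrow 0$ (with \eqref{eq:lftderivT} handling $t=T$) yields \eqref{eq:tder}. I would run a two-step PDE-comparison argument inside the class of $W^{1,2,p}_{\ell oc}$ solutions of \eqref{eq:pdeinRdeps} with quadratic growth, the growth control being provided by Lemma~\ref{lem:polygrow}.

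The first step is an auxiliary $\delta$-free lower bound on $u^{\eps,\delta}$ in terms of the terminal datum,
\begin{align*}
u^{\eps,\delta}(t,x) \ge g(T,x) - \widehat K(T-t),\qquad (t,x)\in\R^{d+1}_{0,T},
\end{align*}
for a constant $\widehat K=\widehat K(K_0,K_2)$. I would check that $v(t,x):=g(T,x)-\widehat K(T-t)$ is a sub-solution of \eqref{eq:pdeinRdeps}. Since $\nabla v = \nabla g(T,\cdot)$, Assumption~\ref{ass:gen2}(v) gives $|\nabla v|_d^2\le f^2(T,\cdot)$, and Assumption~\ref{ass:gen2}(ii) upgrades this to $|\nabla v|_d^2\le f^2(t,\cdot)$, so the gradient-constraint penalisation $\psi_\eps(|\nabla v|_d^2-f^2(t,\cdot))$ vanishes identically on $v$. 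Substituting into \eqref{eq:pdeinRdeps} and exploiting $\Theta(T,\cdot)\ge -K_2$ from \eqref{defn:infTheta}, together with $\partial_t g\le K_0$ and the one-sided Lipschitz estimates for $h,g$ from Assumption~\ref{ass:gen2}(iii), an elementary rearrangement shows the sub-solution inequality holds as soon as $\widehat K$ is large enough in terms of $K_0$ and $K_2$ alone. Because $v(T,\cdot)=g(T,\cdot)=u^{\eps,\delta}(T,\cdot)$, comparison yields $v\le u^{\eps,\delta}$ on $\R^{d+1}_{0,T}$.

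The second step is the time-shift comparison itself. Fix $\eta\in(0,T)$, put $\tilde u(t,x):=u^{\eps,\delta}(t+\eta,x)$ on $[0,T-\eta]\times\R^d$, and consider $\hat u := \tilde u - \Phi(t)$ with $\Phi\in C^1([0,T-\eta];(0,\infty))$ and $\Phi(T-\eta)=\widehat K\eta$. Writing the sub-solution inequality for $\hat u$ against \eqref{eq:pdeinRdeps} and cancelling the PDEs obeyed by $u^{\eps,\delta}$ and $\tilde u$, the condition reduces to
\begin{align*}
\bigl[h(t)-h(t+\eta)\bigr] + \tfrac{1}{\delta}\bigl[(g(t)-\hat u)^+ - (g(t+\eta)-\tilde u)^+\bigr] + \bigl[\psi_\eps(|\nabla\hat u|_d^2 - f^2(t+\eta)) - \psi_\eps(|\nabla\hat u|_d^2 - f^2(t))\bigr] + r\Phi(t) - \Phi'(t) \ge 0.
\end{align*}
The first bracket is $\ge -K_0\eta$ by Assumption~\ref{ass:gen2}(iii); the third bracket is $\ge 0$ because $f^2$ is non-increasing in $t$ and $\psi_\eps$ is non-decreasing; and, whenever $\Phi(t)\ge K_0\eta$, the identity $(g(t)-\hat u) - (g(t+\eta)-\tilde u) = \Phi(t)-\bigl[g(t+\eta)-g(t)\bigr]\ge \Phi(t)-K_0\eta\ge 0$ makes the obstacle bracket $\ge 0$. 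What remains is the backwards ODE $\Phi'(t)\le r\Phi(t)-K_0\eta$, whose solution with the terminal datum $\Phi(T-\eta)=\widehat K\eta$ (given by $\widehat K\eta+K_0\eta(T-\eta-t)$ when $r=0$ and by $\widehat K\eta e^{-r(T-\eta-t)}+\frac{K_0\eta}{r}(1-e^{-r(T-\eta-t)})$ when $r>0$) satisfies $\Phi(t)\le K_4\eta$ and $\Phi(t)\ge K_0\eta$ for a constant $K_4$ depending on $K_0,K_2$ (and, understood as fixed problem parameters, on $r$ and $T$). Step~1 supplies exactly the terminal compatibility $\hat u(T-\eta,\cdot) = g(T,\cdot)-\widehat K\eta \le u^{\eps,\delta}(T-\eta,\cdot)$, and comparison then delivers $\hat u\le u^{\eps,\delta}$, which is the claim.

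The main obstacle is the terminal compatibility at $t=T-\eta$: the naive choice $\Phi\equiv K\eta$ fails because Lemma~\ref{lem:bndobstpen2} only guarantees $u^{\eps,\delta}\ge g-\delta K_2$, an $\eta$-independent deficit that cannot be absorbed into any $K\eta$ as $\eta\downarrow 0$. Step~1 circumvents this by producing a $\delta$-free lower bound that is linear in $(T-t)$; it is exactly at this point that the constant $K_2$ (through $\Theta\ge -K_2$) enters the final $K_4$, alongside $K_0$ which comes from the time-Lipschitz estimates on $h$ and $g$. A secondary subtlety concerns the degenerate case $r=0$, where the ODE for $\Phi$ has no damping and forces a linearly growing correction $K_0\eta(T-\eta-t)$; checking that $\Phi(t)\ge K_0\eta$ remains valid throughout is what pins down the final $K_4$ and the shape of the construction.
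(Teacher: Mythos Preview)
Your approach is genuinely different from the paper's and the structural computations are correct: the sub-solution inequalities for both $v=g(T,\cdot)-\widehat K(T-t)$ (Step~1) and $\hat u=\tilde u-\Phi$ (Step~2) check out, with Assumption~\ref{ass:gen2}(ii),(iii),(v),(vi) entering exactly where you indicate. The paper instead argues directly from the game-value representation in Proposition~\ref{lem:prbraprRD}: for $t_1<t_2$ it picks the optimal $w^{(2)}$ for $u^{\eps,\delta}(t_2,x)$ and the optimal $(n^{(1)},\nu^{(1)})$ for $u^{\eps,\delta}(t_1,x)$, inserts each as a sub-optimal strategy in the other problem, and after an application of Dynkin's formula to $g$ reduces everything to the one-sided time-Lipschitz bounds on $h,g$, the monotonicity of $t\mapsto H^{\eps}$, and $\Theta\ge -K_2$. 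This yields $K_4=K_0(1+T)+K_2$ in a few lines, with no comparison principle needed.

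The gap in your proposal is precisely the comparison step, which you invoke twice but never justify. The PDE \eqref{eq:pdeinRdeps} is semilinear with a gradient nonlinearity $\psi_\eps(|\nabla u|_d^2-f^2)$ on an \emph{unbounded} domain, and the paper does not establish (nor use) a comparison principle for it. At an interior maximum of the difference the gradients coincide and the $\psi_\eps$ terms cancel, so the nonlinearity is harmless \emph{if} the maximum is attained; but $u^{\eps,\delta}$, $v$ and $\hat u$ all have merely quadratic growth (Lemma~\ref{lem:polygrow} and Assumption~\ref{ass:gen2}(iv)), so the supremum of the difference need not be realised. The standard remedy of subtracting a barrier $\mu e^{\lambda(T-t)}(1+|x|_d^2)$ destroys the gradient matching at the new maximiser, reintroducing the $\psi_\eps$ term with an $\eps^{-1}$-dependent error that must be absorbed; this can likely be done using the local gradient bound of Proposition~\ref{prop:gradbndU}, but it is real work and not a throwaway remark. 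Alternatively you could verify your sub-solutions against the probabilistic representation (Dynkin's formula plus \eqref{eq:ridofpnlobst}--type inequalities), but that essentially rebuilds the paper's argument. Either way, ``comparison yields'' is doing too much here.
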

\begin{proof}
Let $u=u^{\varepsilon,\delta}$ for simplicity and take $T\ge t_2>t_1\ge 0$. {Let $w^{(2)}\in \cT^\delta_{t_2}$ be optimal} for the value function $u(t_2,x)$ and let $(n^{(1)},\nu^{(1)})\in\cA^\circ_{t_1}$ be {optimal} for the value function $u(t_1,x)$. Set $X^{(1)}=X^{[n^{(1)},\nu^{(1)}]}$ and notice that $w_s^{(1)}\coloneqq w^{(2)}_s\mathds{1}_{\{s\leq T-t_2\}}$ lies in $\cT^\delta_{t_1}$ and $(n^{(1)},\nu^{(1)})$ restricted to $[0,T-t_2]$ lies in $\cA^\circ_{t_2}$. To simplify notation let us also set $[\Delta_{t_2,t_1}g](s,x)=g(t_2+s,x)-g(t_1+s,x)$ and analogously for $[\Delta_{t_2,t_1}h](s,x)$ and $[\Delta_{t_2,t_1}H^\eps](s,x,y)$. Then
\begin{align}\label{eq:DD}
&u(t_2,x)\!-\!u(t_1,x)\!\leq\! \cJ_{t_2,x}^{\varepsilon,\delta}(n^{(1)},\nu^{(1)},w^{(2)})\!-\!\cJ_{t_1,x}^{\varepsilon,\delta}(n^{(1)},\nu^{(1)},w^{(1)}) \notag\\
&\le \E_x\biggr[R^{w^{(2)}}_{T-t_2}g(T,X^{(1)}_{T-t_2})\!-\!R^{w^{(1)}}_{T-t_1}g(T,X^{(1)}_{T-t_1})\!-\!\int_{T-t_2}^{T-t_1}\!\!R^{w^{(1)}}_s\Big(h\! +\! H^{\varepsilon}(\cdot, n_s^{(1)}\dot{\nu}^{(1)}_s) \Big)(t_1\!+\!s,X^{(1)}_s)\,\ud s\notag\\
&\qquad\:\:+\!\int_{0}^{T-t_2}\!\!R^{w^{(2)}}_s\Big([\Delta_{t_2,t_1}h]\!+\![\Delta_{t_2,t_1}H^\eps](\cdot,n_s^{(1)}\dot{\nu}^{(1)}_s)\!+\!w^{(2)}_s[\Delta_{t_2,t_1}g]\Big)(s,X^{(1)}_s)\ud s\biggr].
\end{align}
By \eqref{rmk:Hnoninc} the Hamiltonian $H^{\varepsilon}$ is non-increasing in time, so $[\Delta_{t_2,t_1}H^\eps]\le 0$. Next, we apply Dynkin's formula to $R^{w^{(1)}}_{T-t_1}g(T,X^{(1)}_{T-t_1})$ on the time interval $[T-t_2,T-t_1]$, to obtain
\begin{align}\label{eq:Dg12}
\E_x\Big[R^{w^{(1)}}_{T-t_1}g(T,X^{(1)}_{T-t_1})\Big]=&\,\E_x\bigg[R^{w^{(1)}}_{T-t_2}g(T\!-\!(t_2\!-\!t_1),X^{(1)}_{T-t_2})\notag\\
&\quad\quad+\int_{T-t_2}^{T-t_1}\!\!R^{w^{(1)}}_s\big(\partial_t g\!+\!\cL g\!-\!rg\!+\langle\nabla g,n^{(1)}_s\dot{\nu}^{(1)}_s\rangle\big)(t_1\!+\!s,X^{(1)}_s)\ud s\biggr].
\end{align}
Let us plug \eqref{eq:Dg12} into \eqref{eq:DD}, recall that $R^{w^{(1)}}_{T-t_2}=R^{w^{(2)}}_{T-t_2}$ and use \eqref{rem:trickhamg} with $f,g$ replacing $f_m,g_m$:
\begin{align*}
&u(t_2,x)-u(t_1,x)\\
&\leq \E_x\biggr[R^{w^{(2)}}_{T-t_2}\left(g(T,X^{(1)}_{T-t_2})\!-\!g(T\!-\!(t_2\!-\!t_1),X^{(1)}_{T-t_2})\right)\!-\!\int_{T-t_2}^{T-t_1}\!R^{w^{(1)}}_s\Theta(t_1\!+\!s, X^{(1)}_s) \,\ud s\\
&\quad\qquad+\int_{0}^{T-t_2}R^{w^{(2)}}_s\Big([\Delta_{t_2,t_1}h](s,X^{(1)}_s)+w^{(2)}_s [\Delta_{t_2,t_1}g](s,X^{(1)}_s)\Big)\,\ud s\biggr],
\end{align*}
where we recall $\Theta=\partial_t g+\cL g-rg+h$.
Thanks to condition \eqref{eq:lipstimehg} on $h$ and $g$ and \eqref{defn:infTheta} on $\Theta$
\begin{align*}
u(t_2,x)-u(t_1,x)\leq \big(K_0(1+T)+K_2\big)(t_2-t_1),
\end{align*}
by evaluating explicitly $\int_0^{T-t_2}w^{(2)}_s R^{w^{(2)}}_s\ud s$.
Then, the claim holds with $K_4= K_0(1+T)+K_2$. 
\end{proof}

Our next goal is to find a uniform bound for the penalty term involving $\psi_\eps$. Here we are inspired by estimates obtained under different assumptions in \cite{zhu1992generalized} (see also \cite{hynd2010partial} and \cite{kelbert2019hjb}).
\begin{lemma}\label{thm:gradpenbnd}
There is $M_5=M_5(m)$, independent of $\eps$ and $\delta$, such that
\begin{align}\label{eq:gradpeneq}
\big\|\psi_\varepsilon\big(|\nabla u^{\varepsilon,\delta}|^2_d-f^2\big)\big\|_{C^0(\overline \cO_m)}\le M_5.
\end{align}
\end{lemma}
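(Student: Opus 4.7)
The proof rests on the identity given by the PDE of Problem~\ref{prb:penprobRd}, namely
\begin{equation*}
\psi_\eps\bigl(|\nabla u^{\eps,\delta}|_d^2 - f^2\bigr)
\;=\; \partial_t u^{\eps,\delta} + \cL u^{\eps,\delta} - r\,u^{\eps,\delta} + h + \tfrac{1}{\delta}(g - u^{\eps,\delta})^+ .
\end{equation*}
Every term on the right-hand side except $\cL u^{\eps,\delta}$ is already controlled, uniformly in $(\eps,\delta)$ on $\overline\cO_m$, by Lemma~\ref{lem:polygrow} (giving $0\le u^{\eps,\delta}\le M_1(m)$), Lemma~\ref{lem:bndtimder} ($\partial_t u^{\eps,\delta}\le K_4$), Lemma~\ref{lem:bndobstpen2} ($\tfrac{1}{\delta}(g-u^{\eps,\delta})^+\le K_2$) and continuity of $h$. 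A direct upper bound on $\cL u^{\eps,\delta}$ would require a uniform estimate on $D^2 u^{\eps,\delta}$ that is not available at this stage, so instead I would estimate the composite $\psi_\eps(|\nabla u^{\eps,\delta}|_d^2 - f^2)$ by a Bernstein-type maximum-principle argument parallel to the one used in the proof of Proposition~\ref{prop:locgradp}/\ref{prop:gradbndU}.

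Set $z := |\nabla u^{\eps,\delta}|_d^2 - f^2$ and consider the auxiliary function
$\Phi(t,x):=\xi_m(x)\,\psi_\eps\bigl(z(t,x)\bigr)$ on $\overline\cO_{m+1}$. Since $\xi_m\equiv 1$ on $\overline B_m$, the lemma follows from a uniform bound on $\sup_{\overline\cO_{m+1}}\Phi$. The proof proceeds by examining the location $(t^*,x^*)$ of a maximizer of $\Phi$. On $\{T\}\!\times\!\overline B_{m+1}$ the terminal condition $u^{\eps,\delta}(T,\cdot)=g(T,\cdot)$ forces $\nabla u^{\eps,\delta}(T,\cdot)=\nabla g(T,\cdot)$, and then Assumption~\ref{ass:gen2}(v) gives $z(T,\cdot)\le 0$ so $\Phi(T,\cdot)=0$. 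On the lateral boundary $[0,T]\!\times\!\partial B_{m+1}$ the cut-off satisfies $\xi_m=0$ and again $\Phi=0$. One may therefore assume the interior case $(t^*,x^*)\in[0,T)\!\times\! B_{m+1}$ with $\Phi(t^*,x^*)>0$, which in particular forces $|\nabla u^{\eps,\delta}(t^*,x^*)|_d>|\nabla g(t^*,x^*)|_d$ by Assumption~\ref{ass:gen2}(v).

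At this interior maximum the plan is to replace $u^{\eps,\delta}$ by the smooth approximations $w^n$ of Lemma~\ref{lem:stability} and apply the parabolic maximum principle $(\partial_t+\cL)\Phi^n\le 0$ at the corresponding maximum point of $\Phi^n$. Convexity of $\psi_\eps$ yields the useful lower bound
$(\partial_t+\cL)\psi_\eps(z) \ge \psi_\eps'(z)\bigl(\partial_t z+\cL z\bigr)$, so the analysis reduces to the Bernstein computation of $\partial_t z+\cL z$. Uniform ellipticity on $\overline B_{m+1}$ produces a favourable $\theta\,|D^2 w^n|_{d\times d}^2$ term (needed to absorb cross terms from $\nabla\xi_m$ through $|\nabla\xi_m|_d^2\le C_0\xi_m$). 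Differentiating the PDE in $x_k$, multiplying by $w^n_{x_k}$ and summing, the only potentially dangerous contributions are: (a) the obstacle-penalty term
$\tfrac{1}{\delta}\chi_n'(g-u^{\eps,\delta})\bigl(|\nabla u^{\eps,\delta}|_d^2-\langle \nabla u^{\eps,\delta},\nabla g\rangle\bigr)$, which is \emph{non-negative} exactly on the region $|\nabla u^{\eps,\delta}|_d>|\nabla g|_d$ where $\Phi>0$, so the $1/\delta$ factor has the right sign; and (b) the gradient-penalty term $2\psi_\eps'(z)\langle \nabla u^{\eps,\delta},\nabla z\rangle$, in which $\psi_\eps'(z)$ can be as large as $1/\eps$. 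Term (b) is tamed via the stationarity $\xi_m\psi_\eps'(z)\nabla z=-\psi_\eps(z)\nabla\xi_m$, which turns it into $-2\psi_\eps(z)\langle\nabla u^{\eps,\delta},\nabla\xi_m\rangle/\xi_m$ and, combined with the uniform gradient bound $N_1=N_1(m+1)$ from Proposition~\ref{prop:gradbndU} and $|\nabla\xi_m|_d^2\le C_0\xi_m$, produces an expression controlled by $\sqrt{\Phi}$ times constants. All remaining drift/zero-order terms are uniformly bounded by the $C^0(\overline\cO_{m+1})$ norms of $b,\sigma,h,g,f^2$ and their first spatial derivatives, and the smoothing remainders vanish as $n\to\infty$.

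Assembling these estimates at the interior maximum yields an inequality of the form $\Phi(t^*,x^*)\le C(m)\bigl(1+\sqrt{\Phi(t^*,x^*)}\bigr)$ with $C(m)$ independent of $(\eps,\delta)$, hence $\sup\Phi\le M_5(m)$. The main obstacle is precisely the bookkeeping around the singular factor $\psi_\eps'$ in step (b): one must show that every occurrence of $\psi_\eps'(z)$ is paired with a quantity proportional to $\psi_\eps(z)$ or to $\nabla\xi_m$, so that the cut-off property $|\nabla\xi_m|_d^2\le C_0\xi_m$ and Proposition~\ref{prop:gradbndU} can be used to absorb these contributions on the left of the resulting inequality, closing the estimate with a constant depending only on~$m$.
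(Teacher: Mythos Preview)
Your approach is essentially the same as the paper's: the auxiliary function $\Phi=\xi_m\psi_\eps(z)$, the boundary analysis, the smooth approximation via $w^n$, the sign argument for the obstacle term at the maximum (since $|\nabla u|_d>|\nabla g|_d$ there forces $\langle\nabla u,\nabla(g-u)\rangle\le 0$), and the use of stationarity $\xi_m\psi_\eps'(z)\nabla z=-\psi_\eps(z)\nabla\xi_m$ to replace the dangerous differentiated gradient-penalty term. The paper carries this out by ensuring that every term in the expansion of $(\partial_t+\cL)\Phi^n\le 0$ acquires a common factor $\psi_\eps'$ (using $\psi_\eps\le\psi_\eps'|\nabla u|_d^2\le\psi_\eps'(1+N_1)^2$ for the lower-order pieces) and then dividing it out, arriving at an inequality of the form $\theta\,\xi|D^2 u|_{d\times d}^2\le C+C\sqrt{\xi}\,\psi_\eps(z)$ at the maximum.

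One point you should sharpen: the Bernstein inequality alone does \emph{not} yield $\Phi\le C(1+\sqrt\Phi)$. After the stationarity trick, term (b) contributes $-2\psi_\eps(z)\langle\nabla u,\nabla\xi_m\rangle$ (inside $\xi\langle\nabla u,(\partial_t+\cL)\nabla u\rangle$), bounded by $2N_1\sqrt{C_0\xi}\,\psi_\eps$ --- this is $\sqrt\xi\,\psi_\eps$, not $\sqrt\Phi$, and the maximum-principle inequality relates $\xi|D^2 u|^2$ to $\psi_\eps$, not $\Phi$ to itself. To close, one must \emph{return} to the PDE identity from your opening paragraph,
\[
\xi\psi_\eps(z)=\xi\bigl(\partial_t u+\cL u-ru+h+\tfrac1\delta(g-u)^+\bigr),
\]
now bounding $\cL u$ by $C|D^2 u|_{d\times d}$ plus gradient/zero-order terms. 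The crucial inputs here are precisely Lemmas~\ref{lem:bndtimder} and~\ref{lem:bndobstpen2} together with $u\ge 0$; they give $\Phi\le C+C\,\xi|D^2 u|_{d\times d}$, and only then, substituting the Bernstein bound, does one obtain $\Phi\le C(1+\sqrt\Phi)$. The paper does this explicitly (introducing a free parameter $\lambda$ in the Young inequality to obtain the equivalent form $\Phi\le C(\lambda)+c(\lambda)\Phi$ with $c(\lambda)<1$), so your sketch should make that second use of the PDE identity explicit rather than treating the Bernstein argument as a replacement for it.
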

\begin{proof}
For notational simplicity we set $u=u^{\varepsilon,\delta}$ and $\xi=\xi_m$. Let 
\begin{align}\label{eq:defnvpsi}
v(t,x)\coloneqq \xi(x)\psi_\varepsilon\!\left(|\nabla u(t,x)|^2_d-f^2(t,x)\right),\quad\text{ for }(t,x)\in \overline{\cO}_{m+1}.
\end{align}
Since $v$ is continuous, then it attains a maximum on $\overline \cO_{m+1}$. If such maximum is attained at a point $(t^*,x^*)\in\partial_P \cO_{m+1}$ then $v(t^*,x^*)=0$ because either $x^*\in \partial B_{m+1}$ and $\xi(x^*)=0$ or $t^*=T$ and $|\nabla u(t^*,x^*)|_d=|\nabla g(T,x^*)|_d\leq f(T,x^*)$ by \eqref{eq:grdg<f}. Thus, suppose the maximum is attained in $\cO_{m+1}$. 

We argue similarly to the proof of Proposition \ref{prop:locgradp}. For any $\eta>0$ there exists a neighbourhood $U_\eta\cup\big(\{0\}\!\times\!V_\eta\big)$ of $(t^*,x^*)$ such that $v(t,x)>v(t^*,x^*)-\eta$ for all $(t,x)\in U_\eta\cup\big(\{0\}\!\times\!V_\eta\big)$. {With no loss of generality, there is $S<T$ and $B$ an open ball with $\overline B\subset B_{m+1}$, so that $U_\eta\cup\big(\{0\}\!\times\!V_\eta\big)\subset \cO_{S,B}$, where $\cO_{S,B}=[0,S)\times B$}.
Let $w^n$ be the solution of a PDE as in \eqref{eq:pdeinRdeps} but with $\nabla u^{\eps,\delta}$ and $(\cdot)^+$ on the right-hand side of that equation replaced by smooth approximations $\nabla u^n$ and $\chi_n$, and the function $f^2$ in the argument of $\psi_\eps$ replaced by $f^2+\frac{1}{n}$. By arguments analogous to those in Lemma \ref{lem:stability}, $w^n\in C^{1,3,\alpha}_{Loc}(\cO_{m+1})$ and $w^n\to u$ in $C^{1,2,\beta}(\overline\cO_{S,B})$ and in $(C^{0,1,\beta}\cap W^{1,2,p})(\overline\cO_m)$ as $n\to \infty$ for all $\beta\in(0,\alpha)$ (see Remark \ref{rem:stab}).
Define
\begin{align}\label{eq:defnxipsi}
v^n(t,x)\coloneqq \xi(x)\psi_\varepsilon\!\left(|\nabla w^n(t,x)|^2_d-f^2(t,x)-\tfrac{1}{n}\right),\quad\text{ for }(t,x)\in \overline{\cO}_{m+1}.
\end{align}
We have that $v^n$ belongs to $C^{1,2,\alpha}_{Loc}(\cO_{m+1})\cap C^{0,1,\alpha}(\overline{\cO}_{m+1})$, it is non-negative and it is equal to zero for $x\in \partial B_{m+1}$. Moreover $v^n\to v$ in $C^{0,1,\gamma}(\overline{\cO}_{m+1})$ for all $\gamma\in(0,\alpha)$. 

Let $(t^*_n,x^*_n)_{n\in\N}$ be such that $(t^*_n,x^*_n)\in\argmax_{\overline{\cO}_{m+1}}v^n $ and, with no loss of generality, assume $(t^*_n,x^*_n)\to(t^*,x^*)$. We also assume $|\nabla g(t^*,x^*)|_d- |\nabla u(t^*,x^*)|_d< 0$,
as otherwise $f(t^*,x^*)\geq |\nabla g(t^*,x^*)|_d\geq |\nabla u(t^*,x^*)|_d$ (cf.\ \eqref{eq:grdg<f}) implies $0\leq v(t,x)\leq v(t^*,x^*)=0$. By uniform convergence of $\nabla w^n$ to $\nabla u$, we can also assume $|\nabla g|_d- |\nabla w^n|_d\leq 0$ on $U_\eta\cup(\{0\}\times V_\eta)$ for all $n\in\N$. 

We denote $\bar \zeta_n\coloneqq (|\nabla w^n|^2_d-f^2-\tfrac{1}{n})(t^*_n,x^*_n)$ and taking derivatives of $v^n$ we obtain
\begin{align}\label{eq:diffvnpnlty}
v^n_t=&\,\xi\psi_\varepsilon'(\bar\zeta_n)\left(|\nabla w^n|^2_d-f^2\right)_t \notag\\
v^n_{x_i}=&\,\xi_{x_i} \psi_\varepsilon(\bar\zeta_n) + \xi\psi_\varepsilon'(\bar\zeta_n)\left(|\nabla w^n|^2_d-f^2\right)_{x_i}\notag \\
v^n_{x_i x_j}=&\,\xi_{x_ix_j} \psi_\varepsilon(\bar\zeta_n) + \psi_\varepsilon'(\bar\zeta_n)\left(\xi_{x_i}\left(|\nabla w^n|^2_d-f^2\right)_{x_j}+\xi_{x_j}\left(|\nabla w^n|^2_d-f^2\right)_{x_i}\right)\\
&\,+\xi\psi_\varepsilon''(\bar\zeta_n)\left(|\nabla w^n|^2_d-f^2\right)_{x_i}\left(|\nabla w^n|^2_d-f^2\right)_{x_j} \notag\\
&\,+\xi\psi_\varepsilon'(\bar\zeta_n)\left(2\langle \nabla w_{x_j}^n,\nabla w_{x_i}^n\rangle+2\langle \nabla w^n,\nabla w_{x_ix_j}^n\rangle -\left(f^2\right)_{x_ix_j}\right).\notag
\end{align}
By \eqref{eq:MaxP} we have $0\geq (\partial_tv^n+\mathcal{L}v^n)(t^*_n,x^*_n)$. Since $(t^*_n,x^*_n)$ is fixed, we omit it from the calculations that follow, for notational simplicity. Then using \eqref{eq:diffvnpnlty} and symmetry of $a_{ij}$ 
\begin{align}\label{eq:1bndpnlty}
0\geq &\,(\mathcal{L}\xi)\psi_\varepsilon(\bar\zeta_n)- \xi\psi_\varepsilon'(\bar\zeta_n)\big(\partial_t(f^2)+\mathcal{L}(f^2)\big)+2\xi \psi_\varepsilon'(\bar\zeta_n)\left\langle\nabla w^n,(\partial_t +\cL)(\nabla w^n)\right\rangle \\[+4pt]
&+\tfrac{1}{2}\xi\psi_\varepsilon''(\bar\zeta_n)\big\langle a\nabla(|\nabla w^n|^2_d-f^2),\nabla(|\nabla w^n|^2_d-f^2)\big\rangle\\
&+\psi_\varepsilon'(\bar\zeta_n)\big\langle a\nabla \xi,\nabla (|\nabla w^n|^2_d-f^2) \big\rangle+\xi\psi_\varepsilon'(\bar\zeta_n)\sum_{i,j=1}^d a_{ij} \big\langle \nabla w^n_{x_i},\nabla w^n_{x_j} \big\rangle,\notag
\end{align}
where $(\partial_t +\cL)(\nabla w^n)$ is the vector with entries $(\partial_t +\cL)w^n_{x_k}$ for $k=1,\ldots,d$.

Recall that $\psi_\eps$ is non-decreasing and convex, so $\psi'_\eps,\psi''_\eps\ge 0$. Then, the last term on the right-hand side above is bounded from below by $\xi\psi'_\eps(\bar\zeta_n)\theta\big|D^2 w^n\big|^2_{d\times d}$ as in \eqref{eq:bndgrad2} with $\theta$ equal to $\theta_{B_{m+1}}$. Uniform ellipticity \eqref{eq:defnthetaEC} on $B_{m+1}$ also gives
\begin{align*}
\tfrac{1}{2}\xi\psi_\varepsilon''(\bar\zeta_n)\big\langle a\nabla(|\nabla w^n|^2_d-f^2),\nabla(|\nabla w^n|^2_d-f^2)\big\rangle\geq 0.
\end{align*}
Set $\bar a_m\coloneqq \max_{i,j}\|a_{ij}\|_{C^0(\overline B_{m+1})}$ and recall $|\nabla \xi|^2_d\le C_0\xi$ (see \eqref{rem:cutoffXI}). Then
\begin{align*}
&\big\langle a\nabla \xi,\nabla (|\nabla w^n|^2_d-f^2) \big\rangle\ge -\bar a_m d^2|\nabla \xi|_d\Big(2|\nabla w^n|_d|D^2 w^n|_{d\times d}+|\nabla f^2|_d\Big)\\
&\geq-\xi\frac{\theta}{4}|D^2w^n|_{d\times d}^2-\frac{16}{\theta}\bar a_m^2 d^4 C_0|\nabla w^n|_d^2 -\bar a_m d^2\sqrt{C_0\xi}|\nabla f^2|_d,\notag
\end{align*}
where we used $|ab|\le pa^2+b^2/p$ with $p=\frac{4}{\theta}$, $b=\sqrt{\xi}|D^2w^n|_{d\times d}$ and $a=2\bar a_md^2\sqrt{C_0}|\nabla w^n|_d$. Since $\nabla w^n\to\nabla u$ uniformly on $\overline \cO_{m+1}$, then by Proposition \ref{prop:gradbndU} we can assume $|\nabla w^n|_d\!\le\! 1\!+\!N_1$ and obtain 
\begin{align}\label{eq:2bndmixxiD2}
&\big\langle a\nabla \xi,\nabla (|\nabla w^n|^2_d-f^2) \big\rangle\ge -\xi\frac{\theta}{4}|D^2w^n|_{d\times d}^2-C_1,
\end{align}
with $C_1=16d^4\bar a_m^2 C_0\theta^{-1}(1+N_1)^2+\bar a_m d^2 \sqrt{C_0\xi}\|\nabla f^2\|_{C^0(\overline\cO_{m+1})}$. Since $\cL\xi$ and $(\partial_t+\cL)f^2$ are continuous on $\overline \cO_{m+1}$ we have $|\cL\xi|+|(\partial_t+\cL)f^2|\leq C_2$ on $\overline{\cO}_{m+1}$. 
Similarly to the first inequality in \eqref{eq:psicnvxzeta}, $\psi_\varepsilon(\bar\zeta_n)\leq \psi_\varepsilon'(\bar\zeta_n)|\nabla w^n|_d^2\leq \psi_\varepsilon'(\bar\zeta_n)(1+N_1)^2$. Thus 
\[
\psi_\varepsilon(\bar\zeta_n)|\cL\xi|\le \psi_\varepsilon'(\bar\zeta_n)(1+N_1)^2 C_2=:\psi_\varepsilon'(\bar\zeta_n)C_3, 
\]
where $C_3=C_3(m)>0$.

We claim that, for any $\lambda>0$ there are constants $C_4=C_4(m)>0$ and $\kappa_{\delta,m}>0$ such that
\begin{align}\label{eq:bad00}
\xi\langle\nabla w^n,(\partial_t +\cL)(\nabla w^n)\rangle\ge -\tfrac{\theta}{8}\xi|D^2 w^n|^2_{d\times d} -\xi\lambda\theta\psi^2_\eps( \zeta_n)- C_4(1+\lambda^{-1})-\kappa_{\delta,m}R_n,
\end{align}
where ${\zeta_n}\coloneqq (|\nabla u^n|_d^2-f^2-\tfrac{1}{n})(t^*_n,x^*_n)$ and $R_n$ is independent of $(t^*_n,x^*_n)$ and such that $R_n\to 0$ as $n\to\infty$. The claim is proven separately at the end of this proof, for the sake of readability.
Plugging all the above estimates into \eqref{eq:1bndpnlty} and factoring out $\psi'_\eps(\bar \zeta_n)\ge 1$ gives us
\begin{align*}
0\le -\tfrac{\theta}{2}\xi|D^2w^n|^2_{d\times d}+2\lambda\theta\xi\psi^2_\eps(\zeta_n)+C_1+C_2+C_3+2C_4(1+\lambda^{-1})+2\kappa_{\delta,m}R_n.
\end{align*}
Letting $C_5=C_5(m)>0$ be a suitable constant the expression simplifies to
\begin{align*}
\tfrac{\theta}{2}\xi|D^2w^n|^2_{d\times d} \le 2\lambda\theta\xi\psi^2_\eps(\zeta_n)+C_5(1+\lambda^{-1})+2\kappa_{\delta,m}R_n.
\end{align*}
We want to bound $\xi|D^2w^n|_{d\times d}$ by $\xi\psi_\eps(\zeta_n)$. So we multiply both sides of the inequality above by $\xi$, take square root and use $\sqrt{a+b}\le \sqrt{a}+\sqrt{b}$ for $a,b\ge 0$ and $|\xi|\le 1$. That gives
\begin{align}\label{eq:bad01}
\xi|D^2w^n|_{d\times d} \le 2\sqrt{\lambda}\xi\psi_\eps(\zeta_n)+\sqrt{2\theta^{-1}C_5(1+\lambda^{-1})}+\sqrt{4\theta^{-1}\kappa_{\delta,m}R_n}.
\end{align}

Recall that $w^n$ solves 
\begin{align*}
\partial_tw^n +\mathcal{L}w^n-r w^n=\psi_\varepsilon(\zeta_n)- h-\tfrac{1}{\delta}\chi_n(g-u),\quad\text{on $[0,T)\times\R^d$}.
\end{align*}
Multiplying by $\xi$ we can express $\xi\psi_\eps(\zeta_n)$ in terms of the remaining functions in the equation above. Since {$(t^*_n,x^*_n)\in\cO_{S,B}$}, $w^n\to u$ in {$C^{1,2,\beta}(\overline \cO_{S,B})$} and $\chi_n(g-u)\to(g-u)^+$ uniformly on compacts, we can assume with no loss of generality {that on $\overline\cO_{S,B}$ the following hold:} $\frac{1}{\delta}\chi_n(g-u)\leq(1+ K_2)$ by \eqref{eq:obstcpenbnd1}, $|\nabla w^n|_d\le (1+N_1)$ by \eqref{eq:gradbndU}, $\partial_t w^n\le \frac{1}{2}+K_4$ by \eqref{eq:tder} and $r w^n\ge -\frac{1}{2}$ because $u\ge 0$. The coefficients $a$ and $b$ in $\cL$ are bounded on $\overline{B}_{m+1}$ by a constant $A_{m+1}$ (slightly abusing notation). Thus, 
\begin{align*}
\xi\psi_\varepsilon(\zeta_n)= &\,\xi\partial_t w^n -\xi rw^n+ \xi \mathcal{L}w^n+\xi h+\xi\tfrac{1}{\delta}\chi_n(g-u)\notag\\
\leq &\,1+K_4 + A_{m+1}(1+N_1)+ \tfrac{1}{2} A_{m+1}\xi|D^2w^n|_{d\times d}+\|h\|_{C^0(\overline\cO_{m+1})}+(1+K_2).
\end{align*}
Substituting \eqref{eq:bad01} and grouping together the constants we obtain, for some $C_6=C_6(m)>0$, 
\begin{align*}
\Big(1-\sqrt{\lambda} A_{m+1}\Big) \xi\psi_\varepsilon(\zeta_n)&\,\leq C_6\sqrt{1+\lambda^{-1}+ \kappa_{\delta,m}R_n}.
\end{align*}
Then, choosing $\lambda=(4A^2_{m+1})^{-1}$ and recalling that all expressions are evaluated at $(t^*_n,x^*_n)$ we obtain
\begin{align*}
\xi(x^*_n)\psi_\varepsilon\left(|\nabla u^n(t^*_n,x^*_n)|^2_d-f^2(t^*_n,x^*_n)-\tfrac{1}{n}\right)\leq 2C_6\sqrt{1+4A^2_{m+1}+ \kappa_{\delta,m}R_n}.
\end{align*}
Taking limits as $n\to\infty$, using that $(t^*_n,x^*_n)\to (t^*,x^*)$, $R_n\to 0$ and $\nabla u^n\to\nabla u$ (uniformly on compacts), we have
\begin{align*}
\xi(x^*)\psi_\varepsilon\left(|\nabla u(t^*,x^*)|^2_d-f^2(t^*,x^*)\right)\leq 2C_6\sqrt{1+4A^2_{m+1}}=:M_5.
\end{align*}
Recalling the definition of $v$ in \eqref{eq:defnvpsi} we can conclude:
\begin{align*}
\big\|\psi_\varepsilon\left(|\nabla u|^2_d-f^2\right)\big\|_{C^0(\overline{\cO}_{m})}\leq&\,\sup_{(t,x)\in\overline{\cO}_{m+1}}v(t,x)=v(t^*,x^*)\le M_5.
\end{align*}
\end{proof}
\begin{proof}[{\bf Proof of \eqref{eq:bad00}}]
Recall that $u=u^{\eps,\delta}$ and that $w^n$ solves
\begin{align*}
\partial_t w^n+\mathcal{L}w^n-r w^n=-h-\tfrac{1}{\delta}\chi_n\big(g-u^{\eps,\delta}\big)+\psi_{\varepsilon}(|\nabla u^n|^2_d-f^2-\tfrac{1}{n}),\quad  \text{ on }[0,T)\times\R^d.
\end{align*}
Differentiating with respect to $x_k$, multiplying by $\xi$ and evaluating at $(t^*_n,x^*_n)$ we get
\begin{align}\label{eq:pdesmotdif}
\xi(\partial_t w^n_{x_k}+\mathcal{L} w^n_{x_k})=&\,-\xi\mathcal{L}_{x_k} w^n+\xi rw^n_{x_k}-\xi h_{x_k}-\xi\tfrac{1}{\delta}\chi_n'(g-u)(g-u)_{x_k}\\
&\,+\xi\psi_{\varepsilon}'(\zeta_n)(|\nabla u^n|^2_d-f^2-\tfrac{1}{n})_{x_k},
\end{align}
where $\zeta_n=(|\nabla u^n|^2_d-f^2-\tfrac{1}{n})(t^*_n,x^*_n)$. We subtract $v_{x_k}^n=\xi_{x_k}\psi_{\varepsilon}(\bar\zeta_n)+\xi\psi_{\varepsilon}'(\bar\zeta_n)(|\nabla w^n|^2_d-f^2)_{x_k}$ from both sides of \eqref{eq:pdesmotdif},
and we add and subtract $\xi_{x_k}\psi_\varepsilon(\zeta_n)$ on the right-hand side of \eqref{eq:pdesmotdif}. Then
\begin{align}\label{eq:pdesmthdiff}
\xi(\partial_t w^n_{x_k}+\mathcal{L} w^n_{x_k})-v^n_{x_k}=&\,-\xi\mathcal{L}_{x_k} w^n+\xi rw^n_{x_k}-\xi h_{x_k}-\xi\tfrac{1}{\delta}\chi_n'(g-u)(g-u)_{x_k}\\
&\,- \xi_{x_k}\psi_\varepsilon(\zeta_n)+{P_{n,k}},\notag
\end{align}
where
\begin{align*}
{P_{n,k}}=&\, \xi_{x_k}\Big(\psi_\varepsilon(\zeta_n)-\psi_\varepsilon(\bar\zeta_n)\Big) + \xi\Big(\psi_\varepsilon'(\zeta_n)\big(|\nabla u^n|^2_d-f^2\big)_{x_k} -\psi_\varepsilon'(\bar\zeta_n)\big(|\nabla w^n|^2_d-f^2\big)_{x_k} \Big).
\end{align*}
Recall that $(t^*_n,x^*_n)\in\cO_{S,B}$ is a stationary point for $v^n$ in the spatial coordinates, so $v^n_{x_k}=0$ in \eqref{eq:pdesmthdiff} for each $1\leq k\leq d$. Then
\begin{align}\label{eq:2bndpnlty21}
\xi\langle \,\nabla w^n,(\partial_t+\cL)(\nabla w^n)\rangle=&\xi\Big(\!-\!\sum_{k=1}^dw^n_{x_k}\mathcal{L}_{x_k} w^n\!+\!r|\nabla w^n|^2_d\!-\!\langle\nabla w^n,\nabla h\!+\!\tfrac{1}{\delta}\chi'_n(g-u)\nabla(g-u)\rangle\Big)\notag\\
&-\!\psi_\eps(\zeta_n)\langle\nabla w^n,\nabla \xi\rangle\!+\!\sum_{k=1}^d w^n_{x_k}P_{n,k}.
\end{align}
{Since $(t^*_n,x^*_n)\in\cO_{S,B}$, then denoting $\|\cdot\|_{S,B}=\|\cdot\|_{C^0(\overline\cO_{S,B})}$ we have} 
\begin{align*}
\small{\sum}_{k=1}^dw^n_{x_k}P_{n,k}\geq&\,-\|\psi_\varepsilon(\zeta_n)-\psi_\varepsilon(\bar\zeta_n)\|_{S,B}\|\nabla w^n\|_{S,B}\|\nabla \xi\|_{S,B}\\
&\, -\|\psi_\varepsilon'(\zeta_n)-\psi_\varepsilon'(\bar\zeta_n)\|_{S,B}\|\nabla w^n\|_{S,B}\|\nabla \left(|\nabla w^n|^2_d-f^2\right)\|_{S,B}\\
&\,-\|\psi_\eps'(\zeta_n)\|_{S,B}\|\nabla w^n\|_{S,B}\|\nabla(|\nabla u^n|^2_d-|\nabla w^n|^2_d)\|_{S,B}=:\tilde R_n,
\end{align*}
where $\tilde R_n\to 0$ as $n\to\infty$ thanks to $C^{1,2,\beta}(\overline\cO_{S,B})$-convergence of $w^n$ and $u^n$ to $u$, for $\beta\in(0,\alpha)$.
By Cauchy-Schwarz inequality, recalling that $0\le \chi'_n(\cdot)\le 2$ and using arguments as in \eqref{eq:grd00} we have 
\begin{align*}
&\langle\nabla w^n,\nabla h\rangle\leq \|\nabla w^n\|_{C^0(\overline\cO_{m+1})}\|\nabla h\|_{C^0(\overline\cO_{m+1})}\leq (N_1+1)\|\nabla h\|_{C^0(\overline\cO_{m+1})}, \\
&\tfrac{1}{\delta}\chi'_n(g-u)\langle\nabla w^n,\nabla(g-u)\rangle\le \tfrac{2}{\delta}\|\nabla\hat w^n\|_{S,B}\big(\|\nabla g\|_{C^0(\overline\cO_{m+1})}+\|\nabla u\|_{C^0(\overline\cO_{m+1})}\big)=:\kappa_{\delta,m}\tilde R'_n,
\end{align*}
where $N_1=N_1(m+1)$ is as in Proposition \ref{prop:gradbndU} and $\hat w^n=u-w^n$. Recall that {$\|\nabla\hat w^n\|_{S,B}\to 0$} as $n\to\infty$, therefore $\tilde R'_n\to 0$ too. For the penultimate term on the right-hand side of \eqref{eq:2bndpnlty21}, recalling $|\nabla \xi|_d\le\sqrt{C_0\xi}$ (see \eqref{rem:cutoffXI}), we have
\begin{align}\label{eq:psiscprdxid}
\psi_\varepsilon(\zeta_n)\langle \nabla w^n,\nabla \xi\rangle \leq&\,|\nabla w^n|_d|\nabla \xi|_d\psi_\varepsilon(\zeta_n)\leq (N_1+1)\sqrt{C_0\xi}\psi_\varepsilon(\zeta_n)\leq\frac{(N_1+1)^2C_0}{\lambda \theta}+\xi \lambda\theta\psi_\varepsilon^2(\zeta_n),
\end{align}
where in the last inequality we used $ab\leq \frac{a^2}{p}+p b^2$ with $a=(N_1+1)\sqrt{C_0}$, $b=\sqrt{\xi}{\psi_\varepsilon(\zeta_n)}$, and $p=\lambda\theta$, with $\lambda$ a constant to be chosen later and $\theta=\theta_{B_{m+1}}$ as in \eqref{eq:defnthetaEC}. 
For the first term on the right-hand side of \eqref{eq:2bndpnlty21} we argue as in \eqref{eq:wLw} and obtain
\begin{align}\label{eq:theta8}
\sum_{k=1}^dw^n_{x_k} \mathcal{L}_{x_k} w^n\le \tfrac{\theta}{8}|D^2 w^n|^2_{d\times d}+C_1(N_1+1)^2,
\end{align}
where $C_1\coloneqq 8d^4A^2_{m+1}\theta^{-1}+2dA_{m+1}$, the constant $A_{m+1}$ is defined as in \eqref{eq:defnsupab} and, differently from \eqref{eq:bndgrad5a1}, we use $ab\leq pa^2+\frac{b^2}{p}$ with $p=\frac{\theta}{8}$, $a=|D^2 w^n|_{d\times d}$ and $b=\frac{d^2}{2}A_{m+1}|\nabla w^n|_d$.

Combining these bounds we get
\begin{align*}
\xi\langle \nabla w^n,(\partial_t+\cL)(\nabla w^n)\rangle
\geq-\xi \tfrac{\theta}{8}|D^2 w^n|^2_{d\times d} -\xi\lambda\theta\psi^2_\eps(\zeta_n)- C_4(1+\lambda^{-1})-\kappa_{\delta,m}R_n,
\end{align*}
where we define $C_4\coloneqq C_1(N_1+1)^2+(N_1+1)^2C_0\theta^{-1}+(N_1+1)\|\nabla h\|_{C^0(\overline\cO_{m+1})}$ and we collect $\tilde R_n$ and $\kappa_{\delta,m}\tilde R_n'$ in $\kappa_{\delta,m}R_n$ with an abuse of notation.
\end{proof}

The bounds on the penalty terms in the PDE for $u^{\eps,\delta}$ enable the next estimate. 
\begin{theorem}\label{thm:W12pbound}
For any $p\in(1,\infty)$, there is $M_6=M_6(m,p)$ such that 
\begin{align}\label{eq:boundW12p}
\|u^{\varepsilon,\delta}\|_{W^{1,2,p}(\cO_{m})}\leq M_6, \quad\text{for all $\eps,\delta\in(0,1)$}.
\end{align} 
\end{theorem}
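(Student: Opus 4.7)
The plan is to repeat the cut-off construction from Proposition \ref{prp:W12pboundtemp}, but this time exploit the refined $\eps,\delta$-uniform bounds on the two penalty terms in order to remove the dependence on $\eps$ and $\delta$ from the estimate. The key observation is that each term on the right-hand side of the PDE in \eqref{eq:pdeinRdeps} is now bounded on $\overline\cO_{m+1}$ uniformly in $\eps,\delta\in(0,1)$: the obstacle penalty $\tfrac{1}{\delta}(g-u^{\eps,\delta})^+$ is controlled by Lemma \ref{lem:bndobstpen2}, the gradient penalty $\psi_\eps(|\nabla u^{\eps,\delta}|^2_d-f^2)$ is controlled by Lemma \ref{thm:gradpenbnd}, and $h$ is continuous hence bounded on $\overline\cO_{m+1}$.

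First I would fix $m\in\N$ and set $\varphi(t,x)\coloneqq \xi_m(x)u^{\eps,\delta}(t,x)$. A direct computation, identical to the one in Proposition \ref{prp:W12pboundtemp}, shows that $\varphi$ solves
\begin{align*}
\partial_t\varphi+\cL\varphi-r\varphi = \xi_m\Big[{-}h-\tfrac{1}{\delta}(g-u^{\eps,\delta})^+ +\psi_\eps\big(|\nabla u^{\eps,\delta}|_d^2-f^2\big)\Big] + Q \quad\text{on }\cO_{m+1},
\end{align*}
with $Q(t,x)=u^{\eps,\delta}(t,x)(\cL\xi_m)(x)+2\langle a(x)\nabla\xi_m(x),\nabla u^{\eps,\delta}(t,x)\rangle$, and with boundary conditions $\varphi=0$ on $[0,T)\times\partial B_{m+1}$ and $\varphi(T,x)=\xi_m(x)g(T,x)$ on $B_{m+1}$.

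Next I would bound the right-hand side in $L^p(\cO_{m+1})$ uniformly in $(\eps,\delta)$. The three terms inside the bracket are controlled in $C^0(\overline\cO_{m+1})$ by $\|h\|_{C^0(\overline\cO_{m+1})}$, by $K_2$ (Lemma \ref{lem:bndobstpen2}), and by $M_5(m+1)$ (Lemma \ref{thm:gradpenbnd}), respectively. The remainder $Q$ is bounded in $C^0(\overline\cO_{m+1})$ because $u^{\eps,\delta}$ is bounded there by Remark \ref{rem:locbndu} and $\nabla u^{\eps,\delta}$ is bounded by $N_1(m+1)$ thanks to Proposition \ref{prop:gradbndU} (all uniformly in $\eps,\delta$). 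Hence there is a constant $N_3=N_3(m,p)$, independent of $\eps,\delta$, such that
\begin{align*}
\Big\|\xi_m\big[{-}h-\tfrac{1}{\delta}(g-u^{\eps,\delta})^+ +\psi_\eps(|\nabla u^{\eps,\delta}|^2_d-f^2)\big]+Q\Big\|_{L^p(\cO_{m+1})}\leq N_3.
\end{align*}

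Finally, the classical $W^{1,2,p}$-estimate for linear parabolic equations with bounded right-hand side and smooth boundary data (\cite[Thm.\ 2.6.5]{bensoussan2011applications}; compare \eqref{eq:W12norm2}) yields
\begin{align*}
\|\varphi\|_{W^{1,2,p}(\cO_{m+1})}\leq C\big(N_3+\|\xi_m g\|_{W^{1,2,p}(\cO_{m+1})}\big),
\end{align*}
with $C>0$ depending only on $p$ and the coefficients of $\cL$ on $\overline B_{m+1}$, and with the terminal datum controlled using $g\in C^{1,2,\alpha}_{\ell oc}$. Since $\varphi=u^{\eps,\delta}$ on $\cO_m$, this gives $\|u^{\eps,\delta}\|_{W^{1,2,p}(\cO_m)}\leq \|\varphi\|_{W^{1,2,p}(\cO_{m+1})}\le M_6(m,p)$ as required. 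There is no real obstacle in this proof: all the hard work has been done in Lemmas \ref{lem:bndobstpen2} and \ref{thm:gradpenbnd} and in Proposition \ref{prop:gradbndU}; what remains is only a routine application of the interior $W^{1,2,p}$-theory via a cut-off.
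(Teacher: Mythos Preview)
Your proof is correct and follows essentially the same approach as the paper: define a cut-off $\varphi=\xi_m u^{\eps,\delta}$, write the linear PDE it solves on $\cO_{m+1}$, bound the source term uniformly in $(\eps,\delta)$ using Lemmas \ref{lem:bndobstpen2} and \ref{thm:gradpenbnd} together with Remark \ref{rem:locbndu} and Proposition \ref{prop:gradbndU}, and invoke the $W^{1,2,p}$-estimate as in \eqref{eq:W12norm2}. The paper's proof is a two-line reference to Proposition \ref{prp:W12pboundtemp} with exactly these replacements; the only cosmetic difference is that the paper writes $\varphi=\xi_{m+1}u^{\eps,\delta}$, which amounts to the same localisation argument.
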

\begin{proof}
The proof repeats the exact same arguments as in the proof of Proposition \ref{prp:W12pboundtemp} but applied to $\varphi=\xi_{m+1}u^{\eps,\delta}$ rather than to $\varphi=\xi_{m_0}u^{\eps,\delta}_q$. In addition, we use Lemmas \ref{lem:bndobstpen2} and \ref{thm:gradpenbnd}
to obtain the upper bound for $\|\varphi\|_{W^{1,2,p}(\cO_{m+1})}$ as in \eqref{eq:W12norm2}, which is therefore independent of $\eps,\delta$. 
\end{proof}

\section{The Variational Inequality}\label{sec:final}

In this section, we finally prove our main result, i.e., Theorem \ref{thm:usolvar}. First we prove that Problem \ref{prb:varineq} admits a solution (Theorem \ref{thm:limfuncum}), then we prove that such solution is the value function of our game (Theorem \ref{thm:game}) and it is the maximal solution for Problem \ref{prb:varineq}. 

\begin{theorem}\label{thm:limfuncum} 
There exists a solution $u$ of Problem \ref{prb:varineq}.
\end{theorem}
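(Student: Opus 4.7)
The plan is to construct the solution $u$ as a limit of the penalised solutions $u^{\eps,\delta}$ from Problem \ref{prb:penprobRd} as $\eps,\delta\downarrow 0$. First, I would invoke the uniform bound $\|u^{\eps,\delta}\|_{W^{1,2,p}(\cO_m)}\le M_6(m,p)$ of Theorem \ref{thm:W12pbound}, valid for every $p\in(1,\infty)$ and $m\in\N$, combined with the Sobolev embedding \eqref{eq:inclW12C01} yielding a uniform $C^{0,1,\beta}(\overline\cO_m)$-bound for $\beta=1-(d+2)/p$ when $p>d+2$. A standard diagonal extraction across $m\in\N$ then produces a sequence $(\eps_n,\delta_n)\downarrow(0,0)$ and a limit $u\in(W^{1,2,p}_{\ell oc}\cap C^{0,1,\beta'}_{\ell oc})(\R^{d+1}_{0,T})$, for any $\beta'<\beta$, such that $u^{\eps_n,\delta_n}\to u$ and $\nabla u^{\eps_n,\delta_n}\to\nabla u$ locally uniformly, with $\partial_t u^{\eps_n,\delta_n}$ and $D^2u^{\eps_n,\delta_n}$ converging weakly to $\partial_t u$ and $D^2u$ in $L^p_{\ell oc}(\R^{d+1}_{0,T})$. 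The quadratic growth $0\le u(t,x)\le K_3(1+|x|^2_d)$ is inherited from Lemma \ref{lem:polygrow}.

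Next, I would verify the two hard constraints on $u$. The obstacle bound comes immediately from Lemma \ref{lem:bndobstpen2}: $(g-u^{\eps,\delta})^+\le \delta K_2\to 0$, hence $u\ge g$ everywhere. For the gradient constraint, Lemma \ref{thm:gradpenbnd} yields $\psi_\eps(|\nabla u^{\eps,\delta}|^2_d-f^2)\le M_5(m)$ on $\overline\cO_m$; using that $\psi_\eps(y)=(y-\eps)/\eps$ for $y\ge 2\eps$ and $\psi_\eps\ge 0$ otherwise, this forces $|\nabla u^{\eps,\delta}|^2_d\le f^2+\eps(1\vee M_5(m))+2\eps$, so passing to the limit along the chosen sequence delivers $|\nabla u|_d\le f$ on every $\overline\cO_m$, and thus on $\R^{d+1}_{0,T}$. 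The terminal condition $u(T,x)=g(T,x)$ follows from $u^{\eps,\delta}(T,x)=g(T,x)$ by locally uniform convergence.

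The remaining and most delicate step is passing to the limit in the PDE of Problem \ref{prb:penprobRd}, rewritten as
\begin{align*}
\partial_tu^{\eps,\delta}+\cL u^{\eps,\delta}-ru^{\eps,\delta}+h=-\tfrac{1}{\delta}(g-u^{\eps,\delta})^+ +\psi_\eps(|\nabla u^{\eps,\delta}|^2_d-f^2).
\end{align*}
The two penalty terms carry opposite signs and neither vanishes globally in the limit, so the strategy is to exploit that $\cC=\{u>g\}$ and $\cI=\{|\nabla u|_d<f\}$ are open. On any compact $K\subset\cC$, locally uniform convergence of $u^{\eps_n,\delta_n}$ forces $(g-u^{\eps_n,\delta_n})^+\equiv 0$ on $K$ for $n$ large, so the right-hand side is non-negative there; the weak $L^p$-limit on the left then produces $\partial_tu+\cL u-ru\ge -h$ a.e.\ on $\cC$. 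Symmetrically, on compacts of $\cI$, locally uniform convergence of $\nabla u^{\eps_n,\delta_n}$ together with $\psi_\eps\equiv 0$ on $(-\infty,0]$ ensures $\psi_{\eps_n}(|\nabla u^{\eps_n,\delta_n}|^2_d-f^2)\equiv 0$ for $n$ large, yielding $\partial_tu+\cL u-ru\le -h$ a.e.\ on $\cI$. On $\cC\cap\cI$ both penalties vanish for $n$ large, giving equality. Combined with the constraints, the terminal condition and the growth bound from Step~2, this completes the verification of Problem \ref{prb:varineq}. The main obstacle is precisely this passage to the limit: one cannot hope for pointwise convergence of the penalties individually, and must carefully exploit their constant signs in the regions where the opposite penalty is known to vanish, a maneuver that hinges on the locally uniform convergence of both $u^{\eps_n,\delta_n}$ and $\nabla u^{\eps_n,\delta_n}$ supplied by Theorem \ref{thm:W12pbound} and the Sobolev embedding.
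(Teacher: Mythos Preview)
Your proposal is correct and follows essentially the same approach as the paper: uniform $W^{1,2,p}_{\ell oc}$ bounds from Theorem \ref{thm:W12pbound}, compactness plus a diagonal extraction, the constraints from Lemmas \ref{lem:bndobstpen2} and \ref{thm:gradpenbnd}, and the passage to the limit in the PDE by noting that on compacts of $\cC$ (resp.\ $\cI$) the obstacle (resp.\ gradient) penalty eventually vanishes. The only organisational difference is that the paper first lets $\delta\downarrow 0$ for fixed $\eps$ and then $\eps\downarrow 0$ (producing the intermediate limits $u^{\eps}$, which are reused in the proof of Theorem \ref{thm:game}), before extracting a diagonal $(\eps_i,\delta_i)\to(0,0)$; your direct simultaneous limit is equally valid for the present theorem.
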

\begin{proof}
Let $(\eps_k)_{k\in\N}$ be a decreasing sequence with $\eps_k\to 0$. Fix $k,m\in\N$. Thanks to Theorem \ref{thm:W12pbound} and the compact embedding of $W^{1,2,p}(\cO_m)$ into $C^{0,1,\beta}(\overline{\cO}_m)$ for $\beta=1-(d+2)/p$, we can extract a sequence $(u^{\eps_k,\delta^m_{k,j}})_{j\in\N}$ converging to a limit $u^{\eps_k;[m]}$ (possibly depending on $\cO_m$) as $j\to\infty$, in the following sense: 
\begin{equation}\label{eq:conver}
\begin{array}{l}
u^{\eps_k,\delta^m_{k,j}}\to u^{\eps_k;[m]}\quad\text{and}\quad \nabla u^{\eps_k,\delta^m_{k,j}}\to \nabla u^{\eps_k;[m]}\quad\text{in } {C^{\alpha}}(\overline \cO_m),\\[+4pt]
\partial_t u^{\eps_k,\delta^m_{k,j}}\to \partial_t u^{\eps_k;[m]}\quad\text{and}\quad D^2 u^{\eps_k,\delta^m_{k,j}}\to D^2 u^{\eps_k;[m]}\quad\text{weakly in } L^p(\cO_m).
\end{array}
\end{equation}
Up to selecting further subsequences (if needed), we find analogous limits on $\cO_{m+1}\subset \cO_{m+2}\subset \ldots$ so that $u^{\eps_k;[m]}=u^{\eps_k;[m+1]}$ on $\overline\cO_m$, $u^{\eps_k;[m+1]}=u^{\eps_k;[m+2]}$ on $\overline\cO_{m+1}$ and so on. Since $\overline\cO_m\uparrow \R^{d+1}_{0,T}$ as $m\to\infty$, iterating this procedure we can  define a 
limit function $u^{\eps_k}$ on $\R^{d+1}_{0,T}$.

The sequence $(u^{\eps_k})_{k\in\N}$ satisfies the same bound as in \eqref{eq:boundW12p}. Therefore, by the same argument as above we can extract a further converging subsequence, which we denote still by $(u^{\eps_k})_{k\in\N}$ with an abuse of notation. That is, there is a function $u$ on $\R^{d+1}_{0,T}$ such that for any $m\in\N$
\begin{equation*}
\begin{array}{l}
u^{\eps_k}\to u\quad\text{and}\quad \nabla u^{\eps_k}\to \nabla u\quad\text{in } {C^{\alpha}}(\overline \cO_m),\\[+4pt]
\partial_t u^{\eps_k}\to \partial_t u\quad\text{and}\quad D^2 u^{\eps_k}\to D^2 u\quad\text{weakly in } L^p(\cO_m).
\end{array}
\end{equation*}
Finally, we can extract a diagonal subsequence $(u^{\eps_i,\delta_i})_{i\in\N}$ that converges to $u$ locally on $\R^{d+1}_{0,T}$ in the sense above as $ (\eps_i,\delta_i)\to 0$, simultaneously. 

Next we prove that the limit function $u$ is solution of Problem \ref{prb:varineq}. By construction, $u\in W^{1,2,p}_{\ell oc}(\R^{d+1}_{0,T})$. Thanks to \eqref{eq:obstcpenbnd1}, \eqref{eq:gradpeneq} {and $C^{0,1,\alpha}$-convergence on compacts}, in the limit as $\eps,\delta\to 0$ we obtain
\begin{align}\label{eq:twopenbnd}
g(t,x)-u(t,x)\leq 0\quad\text{and}\quad|\nabla u(t,x)|_d-f(t,x)\leq0,\quad \text{for all $(t,x)\in\R^{d+1}_{0,T}$}.
\end{align}
Fix $(\bar t,\bar x)\in\cC$, i.e., $u(\bar t,\bar x)>g(\bar t,\bar x)$. By continuity of $u$ and $g$ there is an open neighbourhood $\cO$ of $(\bar t,\bar x)$ such that $u(t,x)>g(t,x)$ for all $(t,x)\in\cO$. Uniform convergence on compacts of $u^{\eps_i,\delta_i}$ to $u$ also guarantees that $u^{\eps_i,\delta_i}>g$ on $\cO$, for sufficiently large $i$'s. Then, for large $i$'s \eqref{eq:pdeinRdeps} reads
\begin{align*}
\partial_tu^{\varepsilon_i,\delta_i}+\mathcal{L}u^{\varepsilon_i,\delta_i}-ru^{\varepsilon_i,\delta_i}=-h+\psi_\varepsilon\big(|\nabla u^{\varepsilon_i,\delta_i}|_d^2-f^2\big)\ge -h,\quad\text{on $\cO$}.
\end{align*}
Multiplying the equation above by $\phi\in C^\infty_c(\cO)$, $\phi\ge 0$ and letting $i\to\infty$ we obtain the second equation in \eqref{eq:inipde}. Analogously, let $(\bar t,\bar x)\in\cI$, i.e., $|\nabla u(\bar t,\bar x)|_d<f(\bar t, \bar x)$. Then, by continuity of $\nabla u$ and uniform convergence on compacts of $\nabla u^{\eps_i,\delta_i}\to \nabla u$ we find an open neighbourhood $\cO$ such that $|\nabla u|_d<f$ on $\cO$ and $|\nabla u^{\eps_i,\delta_i}|_d<f$ on $\cO$ for sufficiently large $i$. In such neighbourhood \eqref{eq:pdeinRdeps} reads
\begin{align*}
\partial_t u^{\varepsilon_i,\delta_i}+\mathcal{L}u^{\varepsilon_i,\delta_i}-ru^{\varepsilon_i,\delta_i}=-h-\tfrac{1}{\delta}\big(g-u^{\eps_i,\delta_i}\big)^+\le -h,
\end{align*}
and by the same argument as above, using test functions, we can pass to the limit and obtain the third equation in \eqref{eq:inipde}. The case in which $(\bar t,\bar x)\in\cI\cap\cC$ is now obvious and the first equation in \eqref{eq:inipde} also holds {for all $(t,x)$ by standard PDE theory (e.g., as in the proof of Theorem \ref{thm:highreguued})}. Finally, the terminal condition is trivially satisfied since $u^{\eps,\delta}(T,x)=g(T,x)$ for all $(\eps,\delta)\in(0,1)^2$.

Notice that $u$ has at most quadratic growth by Lemma \ref{lem:polygrow}.
\end{proof}
We are now in the position to prove our main result.
\begin{theorem}\label{thm:game}
The game in \eqref{eq:lowuppvfnc} admits a value $v$ which is also the maximal solution of Problem \ref{prb:varineq}. Moreover, $\tau_*$ defined in \eqref{eq:taustar} is optimal for the stopper.
\end{theorem}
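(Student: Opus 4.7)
The plan is to show that the function $u$ constructed in Theorem \ref{thm:limfuncum} coincides with both $\underline v$ and $\overline v$ and is therefore the common game value $v$, with $\tau_*$ optimal for the stopper. Maximality then follows by rerunning the same It\^o argument for any other solution of Problem \ref{prb:varineq}.

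The core step is a Krylov-type It\^o formula for $u\in W^{1,2,p}_{\ell oc}\cap C^{0,1,\alpha}_{\ell oc}$. Fix $(n,\nu)\in\cA_t$ and set $\tau_*=\tau_*(t,x;n,\nu)$; by Lemma \ref{lem:jumptau} we may assume $\nu$ has no jump at $\tau_*$. I would apply It\^o to $e^{-rs}u(t+s,X^{[n,\nu]}_s)$ on $[0,\tau_*]$, writing the jump contributions of $\nu$ through the $\lambda$-integral of $\langle\nabla u,n_s\rangle$ as in \eqref{eq:zoudefnint}. On $\{s<\tau_*\}$ the trajectory lies in $\cC$, so the second line of \eqref{eq:inipde} gives $-(\partial_t+\cL-r)u\le h$; everywhere $|\nabla u|_d\le f$ bounds both the absolutely continuous control integral and each jump integrand by $f$. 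Since $u(t+\tau_*,X_{\tau_*})=g(t+\tau_*,X_{\tau_*})$ (either by definition of $\tau_*$ or by the terminal condition), taking expectations yields
\[
u(t,x)\le \cJ_{t,x}(n,\nu,\tau_*)\quad\text{for every }(n,\nu)\in\cA_t,
\]
so that $u(t,x)\le\inf_{(n,\nu)}\cJ_{t,x}(n,\nu,\tau_*)\le\overline v(t,x)$ and $\tau_*$ is an optimal feedback rule for the stopper.

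The reverse inequality $u\ge\overline v$ (and, in parallel, $u\ge\underline v$) I would extract from the penalised problem. By \eqref{eq:valueunbdd} together with the attainment in \eqref{eq:optcntr2},
\[
u^{\eps,\delta}(t,x)=\sup_{w\in\cT^\delta_t}\cJ^{\eps,\delta}_{t,x}(n^*_{\eps,\delta},\nu^*_{\eps,\delta},w)\ge \cJ^{\eps,\delta}_{t,x}(n^*_{\eps,\delta},\nu^*_{\eps,\delta},w^\delta_\tau),\qquad w^\delta_\tau\coloneqq\delta^{-1}\mathds{1}_{\{s\ge\tau\}},
\]
for every $\tau\in\cT_t$. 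A direct computation shows that $R^{w^\delta_\tau}_s$ concentrates on $[0,\tau)$ and $\delta^{-1}\int_\tau^{T-t}R^{w^\delta_\tau}_s\,g\,\ud s\to e^{-r\tau}g(t+\tau,X_\tau)$ as $\delta\downarrow 0$ (an approximate identity); likewise $H^\eps(t,x,y)\to f(t,x)|y|_d$ as $\eps\downarrow 0$, matching the cost in \eqref{eq:zoudefnint}. Extracting a subsequential weak limit $(n^*,\nu^*)\in\cA_t$ of the optimal penalised controls and passing to the limit yields $u(t,x)\ge\sup_\tau\cJ_{t,x}(n^*,\nu^*,\tau)\ge\overline v(t,x)$. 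The symmetric argument, inserting the same $w^\delta_\tau$ into the sup-inf representation $u^{\eps,\delta}=\sup_w\inf_{(n,\nu)}\cJ^{\eps,\delta}$ and invoking density of $\cA^\circ_t$ in $\cA_t$, gives $u\ge\underline v$. Combined with $u\le\overline v$ and the trivial $\underline v\le\overline v$, all four quantities coincide and equal $v$.

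The hard part is the compactness/limit passage for $(n^*_{\eps,\delta},\nu^*_{\eps,\delta})$ in the previous paragraph: the bound \eqref{eq:Zinftybnd} only provides $\E_x[|\nu^*_{T-t}|^2]\le c\eps^{-1}(1+|x|^2_d)$, which degenerates as $\eps\downarrow 0$, so strong tightness in $\cA_t$ is unavailable. The natural remedy is to relax $n_s\,\ud\nu_s$ to an $\R^d$-valued random measure on $[0,T-t]$, extract a narrow-convergent subsequence using the $\eps$-independent $L^1$-bound on $H^\eps(\cdot,n^*_{\eps,\delta}\dot\nu^*_{\eps,\delta})$ inherited from Lemma \ref{thm:gradpenbnd}, and identify the limit as an admissible control via lower semicontinuity of $\cJ$ in the relaxed topology combined with $H^\eps\to f|\cdot|_d$. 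Maximality is then immediate: for any other solution $\tilde u$ of Problem \ref{prb:varineq}, the It\^o argument above applied to $\tilde u$ and its own $\tilde\tau_*=\inf\{s\ge 0:\tilde u(t+s,X^{[n,\nu]}_s)=g(t+s,X^{[n,\nu]}_s)\}\wedge(T-t)$ gives $\tilde u(t,x)\le \cJ_{t,x}(n,\nu,\tilde\tau_*)$, whence $\tilde u\le\overline v=v=u$.
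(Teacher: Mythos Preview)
Your upper-bound argument ($u\le\underline v$ via It\^o applied directly to $u$ along $\tau_*$) is morally right and close to the paper's proof of maximality at the end; the paper, however, does this step by applying It\^o to the smooth $u^{\eps,\delta}$ and passing $\eps,\delta\to 0$ under the expectation (using the uniform bounds of Lemmas \ref{lem:bndobstpen2} and \ref{thm:gradpenbnd}), which sidesteps any Krylov-type justification for integrating the a.e.\ inequality $\partial_tu+\cL u-ru\ge -h$ along a controlled path. Your maximality argument is correct and matches the paper's.

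The genuine gap is in the lower bound $u\ge\overline v$. You correctly identify that the $L^2$ estimate \eqref{eq:Zinftybnd} on $\nu^*_{\eps,\delta}$ blows up like $\eps^{-1}$, so no tightness of the optimal penalised controls is available; your proposed remedy via relaxed controls and narrow convergence would require substantial additional machinery (lower semicontinuity of $\cJ$ in the relaxed topology, identification of the limit cost $\int f\circ\ud\nu$ from $H^\eps\to f|\cdot|_d$) that is neither developed in the paper nor straightforward. The paper avoids all of this with a simple but decisive observation: one passes $\delta\to 0$ \emph{first} with $\eps$ fixed, obtaining $u^\eps$ and its optimal control $(n^\eps,\nu^\eps)$ defined as in \eqref{eq:optcntr2} but with $u^\eps$ in place of $u^{\eps,\delta}$. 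This pair is absolutely continuous, hence already lies in $\cA_t$, and by \eqref{eq:nnueps} together with the trivial bound $H^\eps(t,x,y)\ge f(t,x)|y|_d$ (take $p=f\cdot n$) one gets directly
\[
u^\eps(t,x)\ge \cJ_{t,x}(n^\eps,\nu^\eps,\tau)\quad\text{for every }\tau\in\cT_t,
\]
hence $u^\eps\ge\overline v$. Letting $\eps\to 0$ in the \emph{value}, not in the controls, finishes the argument. No compactness of controls, no relaxation, no density of $\cA^\circ_t$ in $\cA_t$ is needed.
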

\begin{proof}
Fix $(t,x)\in\R^{d+1}_{0,T}$, let $[(n,\nu),\tau]\in\cA_t\times \cT_t$ and redefine $\rho_m$ as in \eqref{eq:rom} but with c\`adl\`ag controls rather than continuous ones. By regularity of $u^{\eps,\delta}$ (Theorem \ref{thm:highreguued}), letting $\rho_m^k\!=\!\rho_m\wedge\!(T\!-t\!-k^{-1})^+$ It\^o's formula applies to 
$e^{-r(\tau\wedge\rho^k_{m})}u^{\varepsilon,\delta}(t+{\tau\wedge\rho^k_{m}},X_{{\tau\wedge\rho^k_{m}}}^{[n,\nu]})$. Using that $u^{\eps,\delta}$ solves Problem \ref{prb:penprobRd}, taking expectations and letting $k\uparrow \infty$ we obtain 
\begin{align}\label{eq:itopensol}
u^{\varepsilon,\delta}(t,x)=\E_x\biggr[&e^{-r(\tau\wedge\rho_{m})}u^{\varepsilon,\delta}(t+\tau\wedge\rho_{m},X_{\tau\wedge\rho_{m}}^{[n,\nu]})\\
&+\! \int_{0}^{\tau\wedge\rho_{m}}\!\!e^{-rs}\big[h\!+\!\tfrac{1}{\delta}\big(g\!-\!u^{\varepsilon,\delta}\big)^+\!-\!\psi_{\varepsilon}\big(|\nabla u^{\varepsilon,\delta}|^2_d\!-\!f^2\big)\big]\big(t\!+\!s,X_{s-}^{[n,\nu]}\big)\,\ud s \\
& -\!\int_{0}^{\tau\wedge\rho_{m}}e^{-rs}\langle\nabla u^{\varepsilon,\delta}(t+s,X_{s-}^{[n,\nu]}), n_s\rangle \,\ud \nu_s^c \notag\\
& -\! \sum_{0\leq s\leq\tau\wedge\rho_{m}}e^{-rs}\int_{0}^{\Delta\nu_s}\langle\nabla u^{\varepsilon,\delta}(t\!+\!s,X^{[n,\nu]}_{s-}+\lambda n_s),n_s\rangle\, \ud\lambda \biggr].\notag
\end{align}

We want to take limits as $\eps,\delta\to 0$ and pass the limits under expectations. To do that we notice that $X^{[n,\nu]}_{s-}\in\overline B_m$ for all $s\in[0,\rho_{m}]$, $\P_x$-a.s. Then the terms under the integral with respect to `$\ud s$' are bounded thanks to Assumption \ref{ass:gen2}, Lemma \ref{lem:bndobstpen2} and Lemma \ref{thm:gradpenbnd}. 
Since $\nabla u^{\eps,\delta}$ is also bounded by $N_1(m)$ (Proposition \ref{prop:gradbndU} with $m_0$ therein replaced by $m$), the integrals with respect to the control are bounded by $N_1(m) \nu_{T-t}$, which is square integrable by definition of $\cA_t$. This reasoning does not apply to the final jump in the sum, because that jump could bring the process $X^{[n,\nu]}$ outside the ball $B_m$ if $\rho_m\le\tau$. It is therefore convenient to isolate that term from the sum and rewrite
\begin{align}\label{eq:disintegration}
\begin{aligned}
&u^{\varepsilon,\delta}(t+\tau\wedge\rho_{m},X_{\tau\wedge\rho_{m}}^{[n,\nu]})-\int_{0}^{\Delta\nu_{\tau\wedge\rho_{m}}}\langle\nabla u^{\varepsilon,\delta}(t\!+\!\tau\wedge\rho_{m},X^{[n,\nu]}_{\tau\wedge\rho_{m}-}+\lambda n_{\tau\wedge\rho_{m}}),n_{\tau\wedge\rho_{m}}\rangle\, \ud\lambda \\
&\,=u^{\varepsilon,\delta}(t+\tau\wedge\rho_{m},X_{\tau\wedge\rho_{m}-}^{[n,\nu]}).
\end{aligned}
\end{align}
Finally, we recall that $u^{\eps,\delta}$ has quadratic growth by Lemma \ref{lem:polygrow} and notice that
\begin{align}\label{eq:extSDE}
\E_x\Big[\sup_{0\le s\le T-t}\big|X^{[n,\nu]}_{s}\big|^2_d\Big]\le c\big(1+|x|^2_d+\E_x[|\nu_{T-t}|^2]\big),
\end{align}
by standard estimates for SDEs \cite[Thm.\ 2.5.10]{krylov1980controlled}, {with $c>0$ independent of $m$, $\eps$, $\delta$}. Then we are allowed to use dominated convergence and it remains to evaluate the limit. 

For $\P_x$-a.e.\ $\omega\in\Omega$ there is a compact $\cK_\omega\subset\R^d$ such that $X^{[n,\nu]}_{s}(\omega)\in\cK_\omega$ for all $s\in[0,\rho_{m}(\omega)]$, by right-continuity of the process and the fact that $\nu$ is square integrable. Then, uniform convergence of $(u^{\eps,\delta},\nabla u^{\eps,\delta})$ to $(u,\nabla u)$ on compacts implies
\begin{equation}
\begin{array}{l}
\lim_{\eps,\delta\to 0}\langle \nabla u^{\eps,\delta}(t+s,X^{[n,\nu]}_{s-}),n_s\rangle(\omega)=\langle \nabla u(t+s,X^{[n,\nu]}_{s-}),n_s\rangle(\omega),\\ [+5pt]
\lim_{\eps,\delta\to 0}\langle \nabla u^{\eps,\delta}(t+s,X^{[n,\nu]}_{s-}+\lambda n_s),n_s\rangle(\omega)=\langle \nabla u(t+s,X^{[n,\nu]}_{s-}+\lambda n_s),n_s\rangle(\omega),
\end{array}
\end{equation}
for all $s\in[0,\rho_{m}(\omega)]$ and all $\lambda\in[0,\Delta\nu_s(\omega)]$, for $\P_x$-a.e.\ $\omega\in\Omega$.

Let us now choose $\tau=\tau_*=\inf\{s\geq0\,|\, u(t\!+\!s,X_s^{[n,\nu]} )\leq g(t\!+\!s,X_s^{[n,\nu]})\}$ (notice that $\tau_*=\tau_*(n,\nu)$). Fix $\omega\in\Omega$ outside of a $\P_x$-null set and recall that $s\mapsto \nu_s(\omega)$ has at most countably many jumps on any bounded interval. Then, for a.e.\ $0\le s<\tau_*(\omega)\wedge\rho_m(\omega)$ there is $q_{s,\omega}>0$ such that 
\[
\big(u-g\big)\big(t\!+\!s,X^{[n,\nu]}_{s-}(\omega)\big)=\big(u-g\big)\big(t\!+\!s,X^{[n,\nu]}_{s}(\omega)\big)\ge q_{s,\omega}.
\]
Therefore, the pointwise convergence of $u^{\eps,\delta}$ to $u$ implies that for a.e.\ $0\le s<\tau_*(\omega)\wedge\rho_m(\omega)$  
\[
\limsup_{\eps,\delta\to0}\,\tfrac1\delta\big(g-u^{\eps,\delta}\big)^+\big(t\!+\!s,X^{[n,\nu]}_{s-}(\omega)\big)=0.
\]

Using the observations above, combined with \eqref{eq:disintegration}, dominated convergence and $\psi_\eps\ge 0$ we obtain
\begin{align*}
u(t,x)\leq&\, \E_x\biggr[e^{-r(\tau_{*}\wedge\rho_{m})}u(t\!+\!\tau_{*}\wedge\rho_{m},X_{\tau_{*}\wedge\rho_{m}}^{[n,\nu]})\!+\!\int_{0}^{\tau_{*}\wedge\rho_{m}}\!\!e^{-rs}h(t+s,X_s^{[n,\nu]})\,\ud s \\
&-\!\int_{0}^{\tau_{*}\wedge\rho_{m}}\!\!e^{-rs}\langle\nabla u(t\!+\!s,X_{s-}^{[n,\nu]}), n_s\rangle \,\ud \nu_s^c\!-\! \sum_{0\leq s\leq\tau_{*}\wedge\rho_{m}}\!\!\!e^{-rs}\!\!\int_{0}^{\Delta\nu_{s}}\!\!\langle\nabla u(t\!+\!s,X_{s-}^{[n,\nu]}\!+\!\lambda n_s),n_s\rangle\, \ud\lambda \biggr].
\end{align*}

By $|\nabla u|_d\leq f$ and the definition of $\tau_{*}\wedge\rho_{m}$ we have
\begin{align*}
u(t,x)\leq &\,\E_x\biggr[e^{-r\tau_{*}}g(t\!+\!\tau_{*},X_{\tau_{*}}^{[n,\nu]})\mathds{1}_{\{\tau_{*}\leq \rho_{m}\}}+ e^{-r\rho_{m}}u(t\!+\!\rho_{m},X_{\rho_{m}}^{[n,\nu]})\mathds{1}_{\{\tau_{*}>\rho_{m}\}}\biggr]\\
&+\E_x\biggr[\int_{0}^{\tau_{*}\wedge\rho_{m}}e^{-rs}h(t+s,X_s^{[n,\nu]})\,\ud s +\int_{[0,\tau_{*}\wedge\rho_{m}]}e^{-rs}f(t+s,X_s^{[n,\nu]})\circ\,\ud \nu_s \biggr].
\end{align*}
Now we let $m\to\infty$. Clearly $\rho_{m}\uparrow T-t$, $\P_x$-a.s.\ by \eqref{eq:extSDE}. Since the bound in \eqref{eq:extSDE} is independent of $m$ and functions $u$ and $g$ have at most quadratic growth we can apply the dominated convergence theorem to pass the limit inside the first expectation. We can also take the limit inside the second expectation by monotone convergence as all terms under the integral are non-negative. For $\P_x$-a.e.\ $\omega\in\Omega$ we have $X^{[n,\nu]}_{s}(\omega)\in\cK_\omega$ for all $s\in[0,T-t]$. Then 
\begin{align*}
&\lim_{m\to\infty}e^{-r\rho_{m}(\omega)}u\big(t+\rho_{m}(\omega),X_{\rho_{m}}^{[n,\nu]}(\omega)\big)\mathds{1}_{\{\tau_{*}>\rho_{m}\}}(\omega)\\
&= e^{-r(T-t)}u\big(T,X_{T-t}^{[n,\nu]}(\omega)\big)\mathds{1}_{\{\tau_{*}\ge T-t\}}(\omega)=e^{-r(T-t)}g\big(T,X_{T-t}^{[n,\nu]}(\omega)\big)\mathds{1}_{\{\tau_{*}= T-t\}}(\omega),\quad\text{$\P_x$-a.e.\ $\omega\in\Omega$},
\end{align*}
because $\tau_{*}(\omega)\le T-t$ and $u$ is uniformly continuous on $[0,T]\times\cK_\omega$. Hence, for $m\to\infty$ we obtain 
\begin{align}\label{eq:tauetaopt}
u(t,x)\le\cJ_{t,x}(n,\nu,\tau_{*}).
\end{align} 
By arbitrariness of $(n,\nu)\in\cA_t$ and sub-optimality of $\tau_{*}$ we have $u(t,x)\leq \underline{v}(t,x)$ by definition of lower value. 

Next we prove $u\geq \overline{v}$. Since $\frac{1}{\delta}(g-u^{\varepsilon,\delta})^+\ge 0$ that term can de dropped from \eqref{eq:itopensol} to obtain a lower bound for $u^{\eps,\delta}$. 
We let $\delta\to 0$ in \eqref{eq:itopensol} along the sequence constructed in \eqref{eq:conver} while keeping $\eps$ fixed. As above, dominated convergence applies, and thanks to $u^\eps\ge g$ ({Lemma \ref{lem:bndobstpen2}}) we obtain
\begin{align}\label{eq:itopensol2}
u^{\varepsilon}(t,x)\ge\!\E_x\biggr[&e^{-r(\tau\wedge\rho_{m})}g(t\!+\!\tau\wedge\rho_{m},X_{\tau\wedge\rho_{m}}^{[n,\nu]})\!+\! \int_{0}^{\tau\wedge\rho_{m}}\!\!e^{-rs}\big[h\!-\!\psi_{\varepsilon}\big(|\nabla u^{\varepsilon}|^2_d\!-\!f^2\big)\big]\big(t\!+\!s,X_{s-}^{[n,\nu]}\big)\,\ud s \notag\\
& -\!\int_{0}^{\tau\wedge\rho_{m}}\!\!\!e^{-rs}\langle\nabla u^{\varepsilon}(t\!+\!s,X_{s-}^{[n,\nu]}), n_s\rangle \ud \nu_s^c\\
& -\! \sum_{0\leq s\leq\tau\wedge\rho_{m}}\!\!\!e^{-rs}\!\!\int_{0}^{\Delta\nu_s}\langle\nabla u^{\varepsilon}(t\!+\!s,X^{[n,\nu]}_{s-}+n_s\lambda),n_s\rangle\, \ud\lambda \biggr].\notag
\end{align}
We can now choose a control pair $(n,\nu)=(n^\eps,\nu^\eps)$ defined as in \eqref{eq:optcntr2} but with $u^{\eps,\delta}$ therein replaced by $u^{\eps}$. 
Although $\nabla u^\eps(t,\cdot)$ is not Lipschitz, it can be shown by standard localisation procedure and the use of \cite[Thm.\ 1]{veretennikov1981strong} that the associated controlled SDE admits a unique, non-exploding, strong solution $X^\eps=X^{[n^\eps,\nu^\eps]}$ on $[0,T\!-\!t]$ (the proof is given in Appendix \ref{app:Xeps} for completeness).

By construction, the pair $(n^{\eps},\nu^{\eps})$ satisfies
\begin{align}\label{eq:nnueps}
-\big\langle n^{\eps}_s\dot{\nu}^{\eps}_s,\nabla u^{\eps}(t+s,X_s^{\eps})\big\rangle-\psi_{\varepsilon}\left(|\nabla u^{\eps}|_d^2- f^2\right)(t+s,X_s^{\eps})= H^{\eps}(t+s,X_s^{\eps},n^{\eps}_s\dot{\nu}^{\eps}_s),
\end{align}
with $H^\eps$ as in \eqref{eq:Heps}. Then, from \eqref{eq:itopensol2} we obtain 
\begin{align*}
u^{\varepsilon}(t,x)\geq\E_x\biggr[&\,e^{-r(\tau\wedge\rho_{m})}g(t+\tau\wedge\rho_{m},X^{\eps}_{\tau\wedge\rho_{m}})+ \int_{0}^{\tau\wedge\rho_{m}}e^{-rs}\big(h\!+\!H^{\varepsilon}(\cdot,n_s^{\eps}\dot{\nu}_s^{\eps})\big)(t+s,X_s^{\eps})\ud s\biggr].
\end{align*}
Taking $p=f(t+s,X_s^{\eps})n_s^{\eps}$ in the Hamiltonian, letting $m\to\infty$ and using Fatou's lemma, we obtain 
\begin{align*}
u^{\varepsilon}(t,x)\geq\E_x\biggr[&\,e^{-r\tau}g\big(t+\tau,X_{\tau}^{\eps}\big)+ \int_{0}^{\tau}e^{-rs}h(t+s,X_s^{\eps})\ud s +\int_{[0,{\tau}]}e^{-rs}f(t+s,X_s^{\eps})\circ \ud \nu_s^{\eps}\biggr],
\end{align*}
where we notice that $\nu^{\eps}$ is absolutely continuous so that the final integral is obvious. 

By arbitrariness of $\tau$ we can take supremum over all stopping times. Since $(n^\eps,\nu^\eps)\in\cA_t$, we can also take infimum over all admissible controls and continue with the same direction of inequalities. That is, $u^\eps(t,x)\ge \overline v(t,x)$. Finally, letting $\eps\to 0$ we obtain $u(t,x)\ge\overline v(t,x)$, as needed. Since $u\le \underline v$ was proven above, we conclude $u=v=\overline v=\underline v$. 

Now that we know that $u=v$ is the value of the game, \eqref{eq:tauetaopt} also yields optimality of $\tau_*$. Notice that, in particular, $\tau_*=\tau_*(n,\nu)$ is best response against any admissible pair $(n,\nu)$.

It remains to prove that $v$ is indeed the maximal solution of Problem \ref{prb:varineq}. Let $w$ be another solution of Problem \ref{prb:varineq}.  
By standard mollification procedure we can construct a sequence $(w_k)_{k\in\N}\subset C^\infty_c(\R^{d+1}_{0,T})$ such that $w_k\to w$ and $\nabla w_k\to\nabla w$ uniformly on compact sets, with $\partial_t w_k \to \partial_t w$ and $D^2w_k\to D^2w$ strongly in $L^p_{\ell oc}(\R^{d+1}_{0,T})$ for all $p\in[1,\infty)$, as $k\to\infty$. For $\eta> 0$, set 
\[
\cC_w^\eta=\{(t,x)\in\R^{d+1}_{0,T}:w(t,x)> g(t,x)+\eta\}.
\]
For fixed $\eta>0$ and $m\in\N$, by the same argument as in the proof of \cite[Thm.\ 4.1, Ch.\ VIII]{fleming2006controlled} 
\begin{align}\label{eq:limsup-w}
\liminf_{k\to\infty}\inf_{(t,x)\in\overline{\cO_m\cap\cC^\eta_w}}\,(\partial_tw_k+\cL w_k-r w_k+h)(t,x)\ge 0.
\end{align}
Pick an arbitrary pair $(n,\nu)\in\cA_t$ and denote 
\[
\zeta_\eta=\inf\{s\ge 0|w(t+s,X^{[n,\nu]}_s)\le g(t+s,X^{[n,\nu]}_s)+\eta\}\wedge(T-t).
\]
By an application of It\^o's formula to $e^{-rs}w_k(t\!+\!s,X^{[n,\nu]}_s)$ we obtain
\begin{align*}
w_k(t,x)=&\, \E_x\biggr[e^{-r(\zeta_{\eta}\wedge\rho_{m})}w_k(t\!+\!\zeta_{\eta}\wedge\rho_{m},X_{\zeta_{\eta}\wedge\rho_{m}}^{[n,\nu]})\!-\!\int_{0}^{\zeta_{\eta}\wedge\rho_{m}}\!\!e^{-rs}(\partial_tw_k\!+\!\cL w_k\!-\!rw_k)(t\!+\!s,X_s^{[n,\nu]})\ud s \\
&-\!\int_{0}^{\zeta_{\eta}\wedge\rho_{m}}\!\!e^{-rs}\langle\nabla w_k(t\!+\!s,X_{s-}^{[n,\nu]}), n_s\rangle \,\ud \nu_s^c\!-\! \sum_{s\leq\zeta_{\eta}\wedge\rho_{m}}\!\!\!e^{-rs}\!\!\int_{0}^{\Delta\nu_{s}}\!\!\langle\nabla w_k(t\!+\!s,X_{s-}^{[n,\nu]}\!+\!\lambda n_s),n_s\rangle\, \ud\lambda \biggr].
\end{align*}
Letting $k\to\infty$, dominated convergence (up to possibly selecting a subsequence) and reverse Fatou's lemma (justified by \eqref{eq:limsup-w}) allow us to pass the limit under expectation. Then, exploiting the uniform convergence of $(w_k,\nabla w_k)$ to $(w,\nabla w)$ on $\overline{\cO_m\cap\cC^\eta_w}$, \eqref{eq:limsup-w}, the definition of $\zeta_\eta\wedge\rho_m$ and the fact that $|\nabla w|_d\le f$ we have
\begin{align*}
w(t,x)\le\,\eta\!+\! \E_x\biggr[&e^{-r\zeta_\eta}g(t\!+\!\zeta_{\eta},X_{\zeta_{\eta}}^{[n,\nu]})\mathds{1}_{\{\zeta_\eta\le\rho_m\}}+e^{-r\rho_m}w(t\!+\!\rho_{m},X_{\rho_{m}}^{[n,\nu]})\mathds{1}_{\{\zeta_\eta>\rho_m\}}\\
&+\!\int_{0}^{\zeta_{\eta}\wedge\rho_{m}}\!\!e^{-rs}h(t\!+\!s,X_s^{[n,\nu]})\ud s\!+\!\int_0^{\zeta_\eta\wedge\rho_m}e^{-rs} f(t\!+\!s,X^{[n,\nu]}_s)\circ\ud \nu_s \biggr].
\end{align*}
Finally, letting $m\to\infty$, the same arguments that lead to \eqref{eq:tauetaopt} give $w(t,x)\le \eta+\cJ_{t,x}(n,\nu,\zeta_\eta)$. Hence, $w(t,x)\le \eta+v(t,x)$ and letting $\eta\to 0$ we conclude.
\end{proof}

\begin{remark}
It is worth noticing that the proof above can be repeated verbatim if we replace $\cA_t$ with $\cA^\circ_t$ everywhere. Thus we conclude that 
\[
v(t,x)=\inf_{(n,\nu)\in\cA^\circ_t}\sup_{\tau\in\cT_t}\cJ_{t,x}(n,\nu,\tau)=\sup_{\tau\in\cT_t}\inf_{(n,\nu)\in\cA^\circ_t}\cJ_{t,x}(n,\nu,\tau).
\]
That is, the game with absolutely continuous controls admits the same value as the game with singular controls. It is however expected, but it will not be proven here, that an optimal control cannot be found in $\cA^\circ_t$ whereas it should be possible to find one in $\cA_t$ in some cases. 
\end{remark}

We make a simple final observation concerning the nature of the optimal control in our game.
\begin{lemma}\label{lem:jumptau}
Let $(t,x)\in\R^{d+1}_{0,T}$. For any $\tau\in\cT_t$ we have
\[
\inf_{(n,\nu)\in\cA_t}\cJ_{t,x}(n,\nu,\tau)=\inf_{(n,\nu)\in\cA^\tau_t}\cJ_{t,x}(n,\nu,\tau),
\]  
where $\cA^\tau_t:=\{(n,\nu)\in\cA_t\, |\, \nu_\tau=\nu_{\tau-},\,\P_x-a.s.\}$.
\end{lemma}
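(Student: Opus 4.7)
The inclusion $\cA^\tau_t\subset\cA_t$ gives immediately $\inf_{\cA_t}\cJ_{t,x}(\cdot,\tau)\le \inf_{\cA^\tau_t}\cJ_{t,x}(\cdot,\tau)$, so the whole content is in the reverse inequality. The plan is to show that for any $(n,\nu)\in\cA_t$ the modified control $(n,\tilde\nu)$ obtained by deleting the (possible) jump of $\nu$ at time $\tau$ lies in $\cA^\tau_t$ and satisfies $\cJ_{t,x}(n,\tilde\nu,\tau)\le \cJ_{t,x}(n,\nu,\tau)$. Concretely, put
\[
\tilde\nu_s:=\nu_s-\Delta\nu_\tau\,\mathbf{1}_{\{s\ge \tau\}},\qquad s\in[0,T-t].
\]
One checks directly that $\tilde\nu$ is $\F$-adapted, non-decreasing, right-continuous with $\tilde\nu_{0-}=0$ and $\E[|\tilde\nu_{T-t}|^2]\le \E[|\nu_{T-t}|^2]<\infty$, so $(n,\tilde\nu)\in\cA^\tau_t$. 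Moreover $X^{[n,\tilde\nu]}_s=X^{[n,\nu]}_s$ for all $s\in[0,\tau)$ and $X^{[n,\tilde\nu]}_\tau=X^{[n,\nu]}_{\tau-}$.

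Next I compare the two payoffs. The integrands in the running terms coincide on $[0,\tau)$ (which has full Lebesgue measure in $[0,\tau]$), so only the stopping value and the Lebesgue--Stieltjes integral at the atom $\{\tau\}$ differ. Writing $A=g(t+\tau,X^{[n,\nu]}_{\tau-}+n_\tau\Delta\nu_\tau)$, $B=g(t+\tau,X^{[n,\nu]}_{\tau-})$ and $C=\int_0^{\Delta\nu_\tau}f(t+\tau,X^{[n,\nu]}_{\tau-}+\lambda n_\tau)\,\ud\lambda$ we get
\[
\cJ_{t,x}(n,\nu,\tau)-\cJ_{t,x}(n,\tilde\nu,\tau)=\E_x\!\left[e^{-r\tau}(A-B+C)\right].
\]
By the fundamental theorem of calculus,
\[
A-B=\int_0^{\Delta\nu_\tau}\langle\nabla g(t+\tau,X^{[n,\nu]}_{\tau-}+\lambda n_\tau),n_\tau\rangle\,\ud\lambda\ge -\!\int_0^{\Delta\nu_\tau}|\nabla g(t+\tau,X^{[n,\nu]}_{\tau-}+\lambda n_\tau)|_d\,\ud\lambda,
\]
using $|n_\tau|_d=1$ and Cauchy--Schwarz. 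Since $|\nabla g|_d\le f$ by \eqref{eq:grdg<f}, the right-hand side is bounded below by $-C$, hence $A-B+C\ge 0$ pointwise and $\cJ_{t,x}(n,\tilde\nu,\tau)\le \cJ_{t,x}(n,\nu,\tau)$.

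Taking the infimum over $(n,\nu)\in\cA_t$ yields $\inf_{\cA^\tau_t}\cJ_{t,x}(\cdot,\tau)\le \inf_{\cA_t}\cJ_{t,x}(\cdot,\tau)$ and the claim follows. The only delicate step is the identity separating the jump contribution from the continuous part of the cost, which is exactly the reason for the Zoubeidi-style definition \eqref{eq:zoudefnint}; once that accounting is in place the gradient constraint on $g$ immediately yields the inequality $A-B+C\ge 0$.
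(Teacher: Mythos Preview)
Your argument is correct and follows essentially the same route as the paper: you isolate the jump contribution at $\tau$, apply the fundamental theorem of calculus to $g$ along the segment $\lambda\mapsto X^{[n,\nu]}_{\tau-}+\lambda n_\tau$, and use $|\nabla g|_d\le f$ to conclude that removing the jump can only lower the payoff. The paper states the same inequality more tersely, without explicitly naming the modified control $\tilde\nu$ or checking its admissibility; your version is slightly more detailed but the mechanism is identical.
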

\begin{proof}
In the expression of $\cJ_{t,x}(n,\nu,\tau)$, for any triple $[(n,\nu),\tau]\in\cA_t\times\cT_t$ we have
\begin{align}\label{eq:lemjump}
\begin{aligned}
&e^{-r\tau }g(t+\tau,X^{[n,\nu]}_\tau)+\int_{[0,\tau]}e^{-rs}f(t+s,X^{[n,\nu]}_s)\circ\ud\nu_s\\
&=e^{-r\tau }g(t\!+\!\tau,X^{[n,\nu]}_{\tau-})\!+\!\int_{[0,\tau)}\!\!e^{-rs}f(t\!+\!s,X^{[n,\nu]}_s)\circ\ud\nu_s\\
&\quad+\!e^{-r\tau}\int_{0}^{\Delta\nu_\tau}\!\!\big(\langle\nabla g,n_\tau\rangle\!+\!f\big)(t\!+\!\tau,X^{[n,\nu]}_{\tau-}\!+\!\lambda n_\tau)\ud \lambda\\
&\ge e^{-r\tau }g(t+\tau,X^{[n,\nu]}_{\tau-})+\int_{[0,\tau)}e^{-rs}f(t+s,X^{[n,\nu]}_s)\circ\ud\nu_s,
\end{aligned}
\end{align}
where the final inequality is due to \eqref{eq:grdg<f}. Therefore, the controller attains a lower payoff by avoiding a jump of the control at time $\tau$. That concludes the proof.
\end{proof}

\section{Short remark about methodology}
The methodology in our paper combines analytical and probabilistic estimates. The former are predominant in particular in Proposition \ref{prop:locgradp}, Proposition \ref{prop:gradbndU}, Lemma \ref{thm:gradpenbnd}, which yield bounds on the spatial gradient of the solution of the penalised problem (first on bounded domains and then on unbounded domains), uniformly with respect to the penalisation parameters and the cut-off functions. Those analytical estimates are rather convoluted and do not necessarily offer a clear intuition for generalisations to different setups.

Probabilistic techniques instead are used in Lemma \ref{lem:polygrow} and Lemma \ref{lem:fntbndr} to obtain, respectively, growth estimates on the solution of the penalised problem and bounds on its gradient on $\partial_P\cO_m$. In Lemma \ref{lem:bndobstpen2} and Lemma \ref{lem:bndtimder} quick probabilistic arguments applied to the solution $u^{\eps,\delta}$ of the penalised problem on unbounded domain yield a uniform bound on $\delta^{-1}(g-u^{\eps,\delta})^+$ and an upper bound for the time derivative $\partial_t u^{\eps,\delta}$. Finally, Theorem \ref{thm:game} is proven using probabilistic representations of the functions $u^{\eps,\delta}$ and $u^\eps=\lim_{\delta\to 0}u^{\eps,\delta}$ combined with limiting arguments. That approach is needed because an application of It\^o's formula (or generalisations thereof) to $u(t,X^{[n,\nu]}_t)$, for generic $(n,\nu)$, is not possible due to the lack of sufficient regularity of the maximal solution $u$ of Problem \ref{prb:penprobRd}. 

It seems to us that proofs obtained via probabilistic methods are generally easy to interpret. It would be interesting to obtain fully probabislitic arguments also for the bounds on the spatial gradient, which we currently obtain via analytical methods. This, however, appears a very challenging task and we leave it for future work.


\appendix 
\section{}

\subsection{Cut-off functions}\label{app:xi}
Here we give a construction of the functions in \eqref{rem:cutoffXI}. Let $\xi:\R\to[0,1]$ be defined as
\begin{align*}
\xi(z)\coloneqq 
\begin{cases}
1, & z \leq 0,\\ 
0, &z\geq 1,\\ 
\exp\Big(\frac{1}{z-1}\Big)\Big/\Big[\exp\Big(\frac{1}{z-1}\Big)+\exp\Big(-\frac{1}{z}\Big)\Big], & 0<z<1, 
\end{cases}
\end{align*}
and set $\xi_m(x)=\xi(|x|_d-m)$ for $x\in \R^d$. Then $\xi_m\in C^\infty_c(\R^d)$, $0\le \xi_m\le 1$, $\xi_m=1$ on $\overline B_m$ and $\xi_m=0$ on $\R^d\setminus B_{m+1}$. It is clear that $\nabla\xi_m={\bf 0}$ on $B_m$ and $\R^d\setminus B_{m+1}$. It can also be checked that for $x\in B_{m+1}\setminus B_m$
\begin{align}\label{eq:cutoff0}
\partial_{x_k}\xi_m(x)=\frac{x_k}{|x|_d}\xi'(|x|_d-m),\quad\text{for $k=1,\ldots d$,}
\end{align}
and therefore $|\nabla \xi_m(x)|_d^2=\big[\xi'(|x|_d-m)\big]^2$.
Since $\xi'(z)=-\xi(z)(1-\xi(z))\big(\frac{1}{(z-1)^2}+\frac{1}{z^2}\big)$,
then $|\nabla \xi_m(x)|_d^2\le C_0\xi_m(x)$ for all $x\in\R^d$, for a suitable $C_0>0$ independent of $m$. 

\subsection{Proof of Lemma \ref{lem:stability}}
For existence and uniqueness of the solution to \eqref{eq:PDEsmth2} we invoke \cite[Thm.\ 3.3.7]{friedman2008partial}. Indeed the smoothing of $u^{\eps,\delta}_m$ and $(\,\cdot\,)^+$ guarantees that 
\begin{align}\label{eq:rhs}
h_m+\tfrac{1}{\delta}\chi_n(g_m-u^{\eps,\delta}_m)-\psi_\eps\big(|\nabla u^n|^2_d-f^2_m-\tfrac{1}{n}\big)\in C^{0,1,\alpha}(\overline \cO_m).
\end{align}
Moreover, the compatibility condition
\[
\lim_{s\uparrow T}(\partial_t g_m\!+\!\cL g_m\!-\!r g_m)(s,x)\!=\!\big[\!-\!h_m\!-\!\tfrac{1}{\delta}\chi_n(g_m\!-\!u^{\eps,\delta}_m)\!+\!\psi_\eps\big(|\nabla u^n|^2_d\!-\!f^2_m\!-\!\tfrac{1}{n}\big)\big](T,x), \,\, \text{for $x\in \partial B_m$},
\]
holds with both sides of the equation equal to zero. Indeed, given that $\xi_{m-1}\in C^\infty_c(B_m)$ we have $g_m=\partial_{x_i} g_m=\partial_{x_i x_j} g_m=0$ on $[0,T]\times\partial B_m$. Moreover, $g_m=0$ on $[0,T]\times\partial B_m$ also implies $\partial_t g_m=0$ on $[0,T]\times\partial B_m$. So the left-hand side of the equation is equal to zero. On the right-hand side, for $x\in\partial B_m$ we have $h_m(T,x)=g_m(T,x)=u^{\eps,\delta}_m(T,x)=0$ and
\[
|\nabla u^n(T,x)|^2_d\le |\nabla u^{\eps,\delta}_m (T,x)|^2_d+\tfrac{1}{n}=|\nabla g_m (T,x)|^2_d+\tfrac{1}{n}\le f^2_m(T,x)+ \tfrac{1}{n},
\]
by uniform convergence of $\nabla u^n$ to $\nabla u^{\eps,\delta}_m$ and \eqref{eq:grdgm<fm}. The compatibility condition follows upon recalling $\chi_n(0)=0$ and $\psi_\eps(z)=0$ for $z\le 0$.

The fact that $w^n\in C^{1,3,\alpha}_{L oc}(\cO_m)$ is also consequence of \eqref{eq:rhs} and standard interior estimates for PDEs \cite[Thm.\ 3.5.11+Cor.\ 3.5.1]{friedman2008partial}. Instead, the convergence result $w^n\to u^{\eps,\delta}_m$ in $C^{1,2,\beta}(\overline\cO_m)$, as $n\to\infty$, for $\beta\in(0,\alpha)$, is a special case of Lemma \ref{lem:approxpenprob}.
\hfill$\square$

\begin{lemma}\label{lem:approxpenprob}
Let $F:\R\times\R^d\to\R$ be a Lipschitz continuous function. 
Fix $\phi,\varphi\in C^{0,1,\alpha}(\overline{\cO}_m)$ and let $u$ be a solution in $C^{1,2,\alpha}(\overline{\cO}_m)$ of 
\begin{align}
\begin{cases}\label{eq:pdelimitapp}
\partial_tu+\cL u-ru=-h_m+F(\phi,\nabla \varphi), & \text{ on }\cO_m, \\
u(t,x)=g_m(t,x), &(t,x)\in\partial_P \cO_m.
\end{cases}
\end{align}
Let $(\phi_n)_{n\in\N},(\varphi_n)_{n\in\N}\subseteq C^{0,1,\alpha}(\overline{\cO}_m)$ be such that $\phi_n\to\phi$ and $\varphi_n\to\varphi$ in $C^{0,1,\gamma}(\overline{\cO}_m)$ as $n\to\infty$ for all $\gamma\in(0,\alpha)$. Let $(F_n)_{n\in\N}$ be equi-Lipschitz continuous functions $F_n:\R\times\R^d\to\R$ such that $F_n\to F$ in $C^0_{\ell oc}(\R^{1+d})$. 
Finally, denote by $u_n$ a solution to \eqref{eq:pdelimitapp} in $C^{1,2,\alpha}(\overline\cO_m)$ with $F_n(\phi_n,\nabla \varphi_n)$ instead of $F(\phi,\nabla \varphi)$. 

Then, up to possibly selecting a subsequence,
\begin{align}\label{eq:ctechlm}
\lim_{n\to\infty}\|u_n-u\|_{C^{1,2,\gamma}(\overline{\cO}_m)}=0,\,\quad \text{for all $\gamma\in(0,\alpha)$}.
\end{align}
If $(\varphi_n)_{n\in\N}\subseteq C^{0,2,\alpha}(\overline{\cO}_m)$ and $(F_n)_{n\in\N}\subseteq C^{1,\alpha}(\R^{1+d})$, then $(u_n)_{n\in\N}\subseteq C^{1,3,\alpha}_{L oc} (\cO_m)\cap C^{1,2,\alpha}(\overline\cO_m)$.
\end{lemma}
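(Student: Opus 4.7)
The plan is to first establish that the nonlinear source $F_n(\phi_n,\nabla\varphi_n)$ converges to $F(\phi,\nabla\varphi)$ in the parabolic Hölder space $C^\gamma(\overline\cO_m)$ for every $\gamma\in(0,\alpha)$, and then apply a linear parabolic Schauder estimate to the difference $u_n-u$. For the source-term convergence, fix $\gamma\in(0,\alpha)$ and pick an intermediate exponent $\gamma'\in(\gamma,\alpha)$, so that by hypothesis $\phi_n\to\phi$ and $\nabla\varphi_n\to\nabla\varphi$ in $C^{\gamma'}(\overline\cO_m)$ and the sequences $(\phi_n,\nabla\varphi_n)$ are uniformly bounded, with range in some compact $K\subset\R^{1+d}$. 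Writing
\[
F_n(\phi_n,\nabla\varphi_n)-F(\phi,\nabla\varphi)=\big[F_n(\phi_n,\nabla\varphi_n)-F_n(\phi,\nabla\varphi)\big]+\big[F_n(\phi,\nabla\varphi)-F(\phi,\nabla\varphi)\big],
\]
the equi-Lipschitz property of $(F_n)$ gives $C^0$-convergence of the first bracket and a uniform $C^{\gamma'}$ seminorm bound on it; the hypothesis $F_n\to F$ locally uniformly gives $C^0$-convergence of the second, while $(\phi,\nabla\varphi)\in C^{\gamma'}$ combined with equi-Lipschitz continuity of $F_n,F$ bounds its $C^{\gamma'}$ seminorm. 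Standard interpolation between $C^0$ and $C^{\gamma'}$ then upgrades this to convergence in $C^\gamma(\overline\cO_m)$.

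Next, the difference $u_n-u\in C^{1,2,\alpha}(\overline\cO_m)$ solves the linear parabolic problem
\[
\partial_t(u_n-u)+\cL(u_n-u)-r(u_n-u)=F_n(\phi_n,\nabla\varphi_n)-F(\phi,\nabla\varphi)\ \text{on}\ \cO_m,\qquad (u_n-u)\big|_{\partial_P\cO_m}=0.
\]
Since both $u_n$ and $u$ are given in $C^{1,2,\alpha}(\overline\cO_m)$, the corner compatibility conditions at $\{T\}\times\partial B_m$ are automatically inherited for the difference equation (both sides become zero at the corner, as a consequence of the pointwise PDE identity at each member). Under Assumption \ref{ass:gen1}, $\cL$ is uniformly elliptic on $\overline B_m$ with $C^1$ coefficients and $\partial B_m$ is smooth, so the linear Schauder estimate \cite[Thm.\ 3.3.7]{friedman2008partial} applies and yields
\[
\|u_n-u\|_{C^{1,2,\gamma}(\overline\cO_m)}\le C_\gamma\,\|F_n(\phi_n,\nabla\varphi_n)-F(\phi,\nabla\varphi)\|_{C^\gamma(\overline\cO_m)},
\]
with $C_\gamma$ independent of $n$ (depending only on the coefficients of $\cL$, the geometry of $\cO_m$, and $\gamma$). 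Combined with the previous step this delivers \eqref{eq:ctechlm}; the subsequence is not actually required because the linear limit problem has a unique solution, but extracting one is harmless.

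For the higher-regularity claim under the stronger hypotheses $\varphi_n\in C^{0,2,\alpha}(\overline\cO_m)$ and $F_n\in C^{1,\alpha}(\R^{1+d})$, the chain rule gives $F_n(\phi_n,\nabla\varphi_n)\in C^{0,1,\alpha}(\overline\cO_m)$, and interior Schauder estimates \cite[Thm.\ 3.5.11+Cor.\ 3.5.1]{friedman2008partial} applied to \eqref{eq:pdelimitapp} then upgrade $u_n$ to $C^{1,3,\alpha}_{Loc}(\cO_m)$, while $C^{1,2,\alpha}(\overline\cO_m)$ membership is by assumption. \textbf{Main obstacle.} The principal technical points are (i) the parabolic-anisotropic interpolation inequality used to pass from $C^0\cap C^{\gamma'}$ bounds to $C^\gamma$ convergence, and (ii) verifying that the Schauder constant $C_\gamma$ depends only on the fixed data of the problem (operator, domain, exponent) and \emph{not} on the right-hand side or on $n$. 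Both are standard but require bookkeeping. A secondary point is the automatic inheritance of corner compatibility for the difference equation, which hinges entirely on having two given solutions in $C^{1,2,\alpha}(\overline\cO_m)$, making the compatibility identity a pointwise consequence of the PDE.
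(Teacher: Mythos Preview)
Your argument is correct and follows essentially the same route as the paper: take the difference $u_n-u$, observe it solves the linear problem with zero parabolic boundary data and source $F_n(\phi_n,\nabla\varphi_n)-F(\phi,\nabla\varphi)$, invoke a Schauder-type a priori estimate, and reduce everything to $C^\gamma$-convergence of the source. The only cosmetic differences are that the paper cites the a priori bound as \cite[Thm.\ 3.2.6]{friedman2008partial} rather than Thm.\ 3.3.7 (the latter is the existence statement, the former is the estimate you actually need), and that the paper obtains $C^\gamma$-convergence of the source by a compactness argument on the equi-Lipschitz family $(F_n)$ (hence the ``up to a subsequence'' in the statement), whereas your interpolation between $C^0$ and $C^{\gamma'}$ is more direct and dispenses with the subsequence; your remark that the subsequence is unnecessary is therefore justified. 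Your discussion of corner compatibility for the difference equation is harmless but superfluous, since zero boundary data and a right-hand side vanishing at the corner (both PDE identities hold pointwise there) make it automatic.
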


\begin{proof}
Define $\hat{u}_n\coloneqq u-u_n$. Then $\hat{u}_n$ solves 
\begin{align*}
\begin{cases}
\partial_t\hat{u}_n+\mathcal{L}\hat{u}_n-r\hat{u}_n=F(\phi,\nabla\varphi)-F_n(\phi_n,\nabla\varphi_n),&\text{ on }\cO_m, \\ 
\hat{u}_n(t,x)=0,&(t,x)\in\partial_P \cO_m.
\end{cases}
\end{align*}
By \cite[Thm.\ 3.2.6]{friedman2008partial} we have the estimate 
\begin{align}\label{eq:stab}
\|\hat{u}_n\|_{C^{1,2,\gamma}(\overline{\cO}_m)}\leq K \|F(\phi,\nabla\varphi)-F_n(\phi_n,\nabla\varphi_n)\|_{C^{0,0,\gamma}(\overline{\cO}_m)},
\end{align}
for a constant $K>0$ independent of $n$. Notice that by equi-Lipschitz continuity, the sequence $(F_n)_{n\in\N}$ is compact in any $C^{\beta}(\overline U)$ for $\beta\in(0,1)$ and bounded set $U\subset\R^{d+1}$. Thanks to the convergence of the functions $\phi_n$, $\varphi_n$ and $F_n$ we have that, up to possibly selecting a subsequence, $F_n(\phi_n,\nabla\varphi_n)\to F(\phi,\nabla\varphi)$ in ${C^{\gamma}}(\overline{\cO}_m)$ as $n\to\infty$ for all $\gamma\in(0,\alpha)$. Thus, \eqref{eq:ctechlm} holds.

If we also assume that $(\varphi_n)_{n\in\N}\subseteq C^{0,2,\alpha}(\overline{\cO}_m)$ and $(F_n)_{n\in\N}\subseteq C^{1,\alpha}(\R^{1+d})$, it turns out that $F_n(\phi_n,\varphi_n)\in C^{0,1,\alpha}(\overline\cO_m)$ and since the coefficients of $\cL$ are continuously differentiable then $u_n\in C^{1,3,\alpha}_{L oc}(\cO_m)$ for all $n$ by \cite[Thm.\ 3.5.11+Cor.\ 3.5.1]{friedman2008partial}.
\end{proof}
\begin{remark}\label{rem:stab}
{Thanks to \cite[Thm.\ 2.6.5 and Rem.\ 2.6.4]{bensoussan2011applications} the bound \eqref{eq:stab} can be replaced by 
\[
\|\hat{u}_n\|_{W^{1,2,p}(\cO_m)}\leq K \|F(\phi,\nabla\varphi)-F_n(\phi_n,\nabla\varphi_n)\|_{L^{p}(\cO_m)},\quad p\in(1,\infty).
\]
Hence, stability of solutions of \eqref{eq:pdelimitapp} also holds in $W^{1,2,p}(\cO_m)$, i.e., $\lim_n u_n= u$ in $W^{1,2,p}(\cO_m)$.}
\end{remark}

\subsection{Convergence of the sequence $(t^\lambda_{n_k},x^\lambda_{n_k})_{k\in\N}$ in Proposition \ref{prop:locgradp}}

Here we prove that $(\tilde{t},\tilde{x})\in\argmax_{\overline{\cO}_m} v^\lambda$. Arguing by contradiction let us assume $(\tilde{t},\tilde{x})\notin\argmax_{\overline{\cO}_m} v^\lambda$. Then there exists a $\epsilon>0$ such that $v^\lambda(\tilde{t},\tilde{x})\leq \max_{\overline{\cO}_m}v^\lambda-\epsilon$ and so there exists a neighbourhood $U_\epsilon$ of $(\tilde{t},\tilde{x})$ such that $v^\lambda(t,x)\leq \max_{\overline{\cO}_m}v^\lambda-\frac{\epsilon}{2} $ for all $(t,x)\in \overline{U}_\epsilon$. For all sufficiently large $k$'s we also have $(t_{n_k}^\lambda,x_{n_k}^\lambda)\in U_\epsilon$ and by uniform convergence 
\begin{align}\label{eq:uc}
|v^{\lambda,n_k}-v^\lambda|(t,x)\leq \frac{\epsilon}{4},\quad\text{for $(t,x)\in \overline{\cO}_m$}.
\end{align} 
Hence 
\begin{align}\label{eq:uc2}
\max_{(t,x)\in\overline{\cO}_m}v^{\lambda,n_k}(t,x)=v^{\lambda,n_k}(t_{n_k}^\lambda,x_{n_k}^\lambda)\leq v^\lambda(t_{n_k}^\lambda,x_{n_k}^\lambda)+\frac{\epsilon}{4}\leq\max_{(t,x)\in\overline{\cO}_m}v^\lambda(t,x)-\frac{\epsilon}{4},
\end{align}
where the first equality is by definition of $(t^\lambda_{n_k},x_{n_k}^\lambda)$, the first inequality by \eqref{eq:uc} and the final inequality follows by $(t^\lambda_{n_k},x_{n_k}^\lambda)\in \overline{U}_\epsilon$.

With no loss of generality we can assume $v^\lambda$ and $v^{\lambda,n}$ be positive. Otherwise we apply our argument to $\tilde{v}^\lambda=v^\lambda-\min_{\overline\cO_m} v^\lambda+1$ and the associated sequence $\tilde{v}^{\lambda,n}=v^{\lambda,n}-\min_{\overline\cO_m} v^\lambda+1$. By triangular inequality and positivity of $v^\lambda$ and $v^{\lambda,n}$  we have 
\[
\max_{\overline{\cO}_m}|v^\lambda-v^{\lambda,n_k}|\geq \max_{\overline{\cO}_m}|v^\lambda|-\max_{\overline{\cO}_m}|v^{\lambda,n_k}|= \max_{\overline{\cO}_m}v^\lambda-\max_{\overline{\cO}_m}v^{\lambda,n_k}\geq \frac{\epsilon}{4},
\]
for all $k$'s sufficiently large, where the inequality is due to \eqref{eq:uc2}. This contradicts uniform convergence and therefore $(\tilde{t},\tilde{x})\in\argmax_{\overline{\cO}_m} v^\lambda$ as claimed. 

\subsection{Existence of $X^\eps$ in the proof of Theorem \ref{thm:game}}\label{app:Xeps}
For $m\in\N$ let $(b^m,\sigma^m)$ be functions $\R^d\to \R^d\times\R^{d\times d'}$ that are equal to $(b,\sigma)$ on $B_m$ and extend continuously to be constant on $\R^d\setminus B_m$. Similarly, $\alpha^m:\R^{d+1}_{0,T}\to \R^d$ is defined as 
\[
\alpha^m(t,x)=-2\psi_{\varepsilon}'\left(|\nabla u^{\eps}(t,x)|_d^2- f^2(t,x)\right)\nabla u^{\eps}(t,x),\quad \text{on $B_m$},
\] 
and extended continuously to be constant on $\R^d\setminus B_m$. Since $u^{\eps}\in C^{0,1,\alpha}_{\ell oc}(\R^{d+1}_{0,T})$, then thanks to \cite[Thm.\ 1]{veretennikov1981strong} there exists a unique strong solution of
\begin{align*}
X^{m}_s=x+\int_0^s \big(b^m(X^{m}_u)+\alpha^m(t+u,X^m_u)\big)\ud u+\int_0^s\sigma^m(X^{m}_u)\ud W_u,\quad s\in[0,T-t].
\end{align*}
Notice that $X^m=X^{m;t}$ depends on $t$ via the time-inhomogeneous drift $\alpha^m$ but we omit it for simplicity. Notice also that here we should understand $n^m_s\dot{\nu}^m_s=\alpha^m(t+s,X^m_s)$ and $X^m=X^{m;[n^m,\nu^m]}$. Letting $\zeta_{m,k}= \inf\{s\ge 0:|X^{m}_s|_d\geq k\}$, for any $m\geq k$ we have $X^m_{s\wedge\zeta_{m,k}}=X^k_{s\wedge\zeta_{k,k}}$ for all $s\in[0,T-t]$, $\P_x$-a.s.\ (i.e., the two processes are indistinguishable). Thus, setting $X^\eps_s(\omega):=X^k_{s}(\omega)$ for $s<\zeta_{k,k}(\omega)$ and denoting $\tau_k=\inf\{s\ge 0: |X^\eps_s|_d\ge k\}$ it is clear that, by uniqueness of strong solutions and the definition of the pair $(n^\eps,\nu^\eps)$, the process $X^\eps$ satisfies
\begin{align*}
X^{\eps}_{s\wedge\tau_k}=x+\int_0^{s\wedge\tau_k} \big(b(X^{\eps}_u)+n^\eps_u\dot{\nu}^\eps_u\big)\ud u+\int_0^{s\wedge\tau_k}\sigma(X^{\eps}_u)\ud W_u,\quad s\in[0,T-t].
\end{align*}
By continuity of paths $\tau_k\le \tau_{k+1}$ and $\tau_\infty:=\lim_{k\to\infty}\tau_k$ is well-defined. Moreover, $\nu^\eps$ satisfies the same bound as in \eqref{eq:Zinftybnd} thanks to \eqref{eq:nnueps}. Therefore, linear growth of the coefficients of the SDE and the same arguments as those at the end of the proof of Proposition \ref{lem:prbraprRD} imply that $\P_x(\tau_\infty\le T-t)=0$. Then $X^\eps$ is well-defined on $[0,T-t]$.

\subsection{Shaefer's fixed point theorem}
\begin{theorem}[Thm.\ 9.2.4 in \cite{evans10}]\label{thm:Schaefer}
Suppose $T:\cD\to\cD$ is a continuous and compact mapping on a Banach space $\cD$. Assume further that the set
\begin{align*}
\left\{f\in\cD\,|\,f=\rho T[f]\text{ for some }\rho\in[0,1]\right\}
\end{align*}
is bounded. Then $T$ has a fixed point.
\end{theorem}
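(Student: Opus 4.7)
The plan is to deduce Schaefer's theorem from Schauder's fixed-point theorem by composing $T$ with a radial retraction onto a sufficiently large closed ball. This is the classical reduction and the only nontrivial point is to rule out the possibility that the fixed point of the composed map lies on the boundary of the retraction ball.

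By the boundedness hypothesis, there exists $M>0$ such that $\|f\|\le M$ for every $f\in\cD$ satisfying $f=\rho T[f]$ for some $\rho\in[0,1]$. Fix any $R>M$ (for instance $R=2M$) and let $B_R=\{f\in\cD:\|f\|\le R\}$, which is a closed, convex, bounded subset of $\cD$. Define the radial retraction $r:\cD\to B_R$ by
\[
r(f)=\begin{cases} f, & \|f\|\le R,\\ Rf/\|f\|, & \|f\|>R,\end{cases}
\]
which is continuous and satisfies $r(B_R)\subset B_R$. Consider $S\coloneqq r\circ T:B_R\to B_R$. Since $T$ is continuous and compact on $\cD$ and $r$ is continuous, $S$ is continuous; moreover $S(B_R)\subset r(T(B_R))$ and $T(B_R)$ is relatively compact in $\cD$, so $S(B_R)$ is relatively compact in $B_R$.

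The first key step is then to invoke Schauder's fixed-point theorem: a continuous map of a nonempty closed convex bounded subset of a Banach space into a relatively compact subset of itself admits a fixed point. This yields $f^\star\in B_R$ with $S[f^\star]=f^\star$. The second key step, and the only genuinely delicate one, is to upgrade $f^\star$ to a fixed point of $T$. There are two cases. If $\|T[f^\star]\|\le R$, then $r(T[f^\star])=T[f^\star]$, so $f^\star=T[f^\star]$ and we are done. Otherwise $\|T[f^\star]\|>R$, in which case
\[
f^\star=r(T[f^\star])=\frac{R}{\|T[f^\star]\|}T[f^\star]=\rho\,T[f^\star],\qquad \rho\coloneqq\frac{R}{\|T[f^\star]\|}\in(0,1).
\]
Hence $f^\star$ belongs to the set in the hypothesis, so $\|f^\star\|\le M$. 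But the right-hand equality also gives $\|f^\star\|=R>M$, a contradiction. Therefore only the first case occurs, and $f^\star$ is a fixed point of $T$.

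The main (and essentially only) obstacle is the verification that $S$ satisfies the hypotheses of Schauder's theorem in the chosen Banach-space setting, and the clean handling of the dichotomy above: choosing $R$ strictly greater than $M$ is what forces the contradiction and prevents a spurious boundary fixed point. Everything else is routine. Since the statement is cited verbatim from \cite{evans10}, the proof could equivalently be replaced by a pointer to that reference.
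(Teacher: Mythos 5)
Your argument is correct and is essentially the standard proof of Schaefer's theorem — the same Schauder-plus-radial-retraction argument given in the cited reference \cite{evans10}; the paper itself offers no proof, only the citation, so there is nothing further to compare. Your choice of $R>M$ to exclude a boundary fixed point is handled cleanly and the compactness of $S(B_R)=r(T(B_R))$ follows, as you say, from compactness of $T$ and continuity of $r$.
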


\bibliographystyle{plain}
\bibliography{Bibliography}

\begin{thebibliography}{10}

\bibitem{alvarez2000singular}
L.H.R. Alvarez.
\newblock Singular stochastic control in the presence of a state-dependent
  yield structure.
\newblock {\em Stochastic Process.\ Appl.}, 86(2):323--343, 2000.

\bibitem{bayraktar2013controller}
E.~Bayraktar and Y.-J. Huang.
\newblock On the multidimensional controller-and-stopper games.
\newblock {\em {SIAM} J.\ Control Optim.}, 51(2):1263--1297, 2013.

\bibitem{bayraktar2011regularity}
E.~Bayraktar and V.R. Young.
\newblock Proving regularity of the minimal probability of ruin via a game of
  stopping and control.
\newblock {\em Finance Stoch.}, 15(4):785--818, 2011.

\bibitem{bensoussan1974nonlinear}
A.~Bensoussan and A.~Friedman.
\newblock Nonlinear variational inequalities and differential games with
  stopping times.
\newblock {\em J.\ Funct.\ Anal.}, 16(3):305--352, 1974.

\bibitem{bensoussan1977nonzero}
A.~Bensoussan and A.~Friedman.
\newblock Nonzero-sum stochastic differential games with stopping times and
  free boundary problems.
\newblock {\em Trans.\ Amer.\ Math.\ Soc.}, 231(2):275--327, 1977.

\bibitem{bensoussan2011applications}
A.~Bensoussan and J.-L. Lions.
\newblock {\em Applications of variational inequalities in stochastic control},
  volume~12 of {\em Studies in Mathematics and its Applications}.
\newblock North-Holland, Amsterdam, New York, Oxford, 1982.

\bibitem{budhiraja2006optimal}
A.~Budhiraja and K.~Ross.
\newblock Existence of optimal controls for singular control problems with
  state constraints.
\newblock {\em Ann.\ Appl.\ Probab.}, 16(4):2235--2255, 2006.

\bibitem{chiarolla1998optimal}
M.B. Chiarolla and U.G. Haussmann.
\newblock Optimal control of inflation: A central bank problem.
\newblock {\em SIAM J.\ Control Optim.}, 36(3):1099--1132, 1998.

\bibitem{chiarolla2009irreversible}
M.B. Chiarolla and U.G. Haussmann.
\newblock On a stochastic, irreversible investment problem.
\newblock {\em SIAM J.\ Control Optim.}, 48(2):438--462, 2009.

\bibitem{choukroun2015bsde}
S.~Choukroun, A.~Cosso, and H.~Pham.
\newblock Reflected {BSDEs} with nonpositive jumps, and controller-and-stopper
  games.
\newblock {\em Stochastic Process.\ Appl.}, 125(2):597--633, 2015.

\bibitem{chow1985additive}
P.-L. Chow, J.-L. Menaldi, and M.~Robin.
\newblock Additive control of stochastic linear systems with finite horizon.
\newblock {\em SIAM J.\ Control Optim.}, 23(6):858--899, 1985.

\bibitem{de2017dividend}
T.~De~Angelis and E.~Ekstr{\"o}m.
\newblock The dividend problem with a finite horizon.
\newblock {\em Ann.\ Appl.\ Probab.}, 27(6):3525--3546, 2017.

\bibitem{deangelis2014investment}
T.~De~Angelis and G.~Ferrari.
\newblock A stochastic partially reversible investment problem on a finite
  time-horizon: Free-boundary analysis.
\newblock {\em Stochastic Process.\ Appl.}, 124(12):4080--4119, 2014.

\bibitem{evans1979second}
L.C. Evans.
\newblock A second order elliptic equation with gradient constraint.
\newblock {\em Comm.\ Partial Differential Equations}, 4(5):555--572, 1979.

\bibitem{evans10}
L.C. Evans.
\newblock {\em Partial differential equations}.
\newblock American Mathematical Society, Providence, R.I., 2010.

\bibitem{fleming2012deterministic}
W.H. Fleming and R.W. Rishel.
\newblock {\em Deterministic and stochastic optimal control}, volume~1 of {\em
  Applications of {M}athematics}.
\newblock Springer-Verlag, New York Inc, 1986.

\bibitem{fleming2006controlled}
W.H. Fleming and H.M. Soner.
\newblock {\em Controlled Markov processes and viscosity solutions}, volume~25.
\newblock Springer Science \& Business Media, 2006.

\bibitem{friedman1973stochastic}
A.~Friedman.
\newblock Stochastic games and variational inequalities.
\newblock {\em Arch.\ Ration.\ Mech.\ Anal.}, 51(5):321--346, 1973.

\bibitem{friedman2008partial}
A.~Friedman.
\newblock {\em Partial differential equations of parabolic type}.
\newblock Courier Dover Publications, 2008.

\bibitem{gilbarg2015elliptic}
D.~Gilbarg and N.S. Trudinger.
\newblock {\em Elliptic partial differential equations of second order}.
\newblock Springer-Verlag, Berlin, Heidelberg, 2001.

\bibitem{hamadene2006stochastic}
S.~Hamadène.
\newblock Mixed zero-sum stochastic differential game and {American} game
  options.
\newblock {\em SIAM J.\ Control Optim.}, 45(2):496--518, 2006.

\bibitem{haussmann1995singular}
U.G. Haussmann and W.~Suo.
\newblock Singular optimal stochastic controls {I}: Existence.
\newblock {\em SIAM J.\ Control Optim.}, 33(3):916--936, 1995.

\bibitem{hernandez2015zero}
D.~Hernandez-Hernandez, R.S. Simon, and M.~Zervos.
\newblock A zero-sum game between a singular stochastic controller and a
  discretionary stopper.
\newblock {\em Ann.\ Appl.\ Probab.}, 25(1):46 -- 80, 2015.

\bibitem{hynd2010partial}
R.~Hynd.
\newblock {\em Partial differential equations with gradient constraints arising
  in the optimal control of singular stochastic processes}.
\newblock PhD thesis, UC Berkeley, 2010.

\bibitem{hynd2013analysis}
R.~Hynd.
\newblock Analysis of {H}amilton-{J}acobi-{B}ellman equations arising in
  stochastic singular control.
\newblock {\em {ESAIM} Control Optim.\ Calc.\ Var.}, 19(1):112--128, 2013.

\bibitem{karatzas2001controller}
I.~Karatzas and W.D. Sudderth.
\newblock The controller-and-stopper game for a linear diffusion.
\newblock {\em Ann.\ Probab.}, 29(3):1111--1127, 2001.

\bibitem{karatzas2008martingale}
I.~Karatzas and I.-M. Zamfirescu.
\newblock Martingale approach to stochastic differential games of control and
  stopping.
\newblock {\em Ann.\ Probab.}, 36(4):1495--1527, 2008.

\bibitem{kelbert2019hjb}
M.~Kelbert and H.A. Moreno-Franco.
\newblock {HJB} equations with gradient constraint associated with controlled
  jump-diffusion processes.
\newblock {\em {SIAM} J.\ Control Optim.}, 57(3):2185--2213, 2019.

\bibitem{krylov1980controlled}
N.V. Krylov.
\newblock {\em Controlled diffusion processes}, volume~14 of {\em Stochastic
  Modelling and Applied Probability}.
\newblock Springer, New York, NY, 1980.
\newblock Reprint: Springer-Verlag Berlin Heidelberg, 2009.

\bibitem{krylov2008lectures}
N.V. Krylov.
\newblock {\em Lectures on elliptic and parabolic equations in {S}obolev
  spaces}, volume~96.
\newblock American Mathematical Soc., 2008.

\bibitem{li2017existence}
J.~Li and G.~Zitkovic.
\newblock Existence, characterization, and approximation in the generalized
  monotone-follower problem.
\newblock {\em SIAM J.\ Control Optim.}, 55(1):94--118, 2017.

\bibitem{maitra1996gambler}
A.P. Maitra and W.D. Sudderth.
\newblock The gambler and the stopper.
\newblock {\em Lecture Notes-Monograph Series}, 30:191--208, 1996.

\bibitem{menaldi1989optimal}
J.L. Menaldi and M.I. Taksar.
\newblock Optimal correction problem of a multidimensional stochastic system.
\newblock {\em Automatica J.\ IFAC}, 25(2):223--232, 1989.

\bibitem{soner1989regularity}
H.M. Soner and S.E. Shreve.
\newblock Regularity of the value function for a two-dimensional singular
  stochastic control problem.
\newblock {\em {SIAM} J.\ Control Optim.}, 27(4):876--907, 1989.

\bibitem{soner1991free}
H.M. Soner and S.E. Shreve.
\newblock A free boundary problem related to singular stochastic control: the
  parabolic case.
\newblock {\em Comm.\ Partial Differential Equations}, 16(2-3):373--424, 1991.

\bibitem{stettner2011penalty}
L.~Stettner.
\newblock Penalty method for finite horizon stopping problems.
\newblock {\em SIAM J.\ Control Optim.}, 49(3):1078--1099, 2011.

\bibitem{taksar1992skorohod}
M.I. Taksar.
\newblock Skorohod problems with nonsmooth boundary conditions.
\newblock {\em J.\ Comput.\ Appl.\ Math.}, 40(2):233--251, 1992.

\bibitem{veretennikov1981strong}
A.Y. Veretennikov.
\newblock On strong solutions and explicit formulas for solutions of stochastic
  integral equations.
\newblock {\em Mat.\ Sb.}, 39(3):387--403, 1981.

\bibitem{weerasinghe2006zsg}
A.~Weerasinghe.
\newblock A controller and a stopper game with degenerate variance control.
\newblock {\em Electron.\ Commun.\ Probab.}, 11:89--99, 2006.

\bibitem{zhu1992generalized}
H.~Zhu.
\newblock Generalized solution in singular stochastic control: the
  nondegenerate problem.
\newblock {\em Appl.\ Math.\ Optim.}, 25(3):225--245, 1992.

\end{thebibliography}

\end{document}